\newif\ifdraft
\newcommand{\tensor}{\otimes}
\newcommand{\isom}{\simeq}
\newcommand{\C}{\mathbb{C}}
\newcommand{\Q}{\mathbb{Q}}
\newcommand{\Z}{\mathbb{Z}}
\newcommand{\DD}{\mathbb{D}}
\newcommand{\M}{\mathcal{M}}
\newcommand{\N}{\mathcal{N}}
\newcommand{\D}{\mathcal{D}}
\renewcommand{\H}{\mathcal{H}}
\newcommand{\HH}{\mathbb{H}}
\renewcommand{\O}{\mathcal{O}}
\newcommand{\DR}{\mathrm{DR}}
\newcommand{\codim}{\mathrm{codim}}
\newcommand{\K}{\mathcal{K}}
\newcommand{\gr}{\mathrm{gr}}
\newcommand{\h}{\underline h}
\newcommand{\DB}{\underline{\Omega}} %for du Bois complex
\newcommand{\Sing}{\mathrm{Sing}}
\newcommand{\dual}{\mathbf{D}}
\newcommand{\Supp}{\mathrm{Supp}}
\newcommand{\lcdef}{\mathrm{lcdef}}
\newcommand{\depth}{\mathrm{depth}}
\newcommand{\sm}{\smallsetminus}
\newcommand{\IC}{\mathrm{IC}}
\newtheorem{thm}[equation]{Theorem}
\newtheorem{cor}[equation]{Corollary}
\newtheorem{lem}[equation]{Lemma}
\newtheorem{prop}[equation]{Proposition}
\theoremstyle{definition}
\newtheorem{defn}[equation]{Definition}
\theoremstyle{remark}
\newtheorem{rmk}[equation]{Remark}
\newtheorem{ex}[equation]{Example}
\theoremstyle{plain}
\newcommand{\theoremref}[1]{\hyperref[#1]{Theorem~\ref*{#1}}}
\newcommand{\lemmaref}[1]{\hyperref[#1]{Lemma~\ref*{#1}}}
\newcommand{\definitionref}[1]{\hyperref[#1]{Definition~\ref*{#1}}}
\newcommand{\propositionref}[1]{\hyperref[#1]{Proposition~\ref*{#1}}}
\newcommand{\conjectureref}[1]{\hyperref[#1]{Conjecture~\ref*{#1}}}
\newcommand{\corollaryref}[1]{\hyperref[#1]{Corollary~\ref*{#1}}}
\newcommand{\exampleref}[1]{\hyperref[#1]{Example~\ref*{#1}}}
\let\old@caption\caption
\renewcommand*{\caption}[1]{%
	\setcounter{figure}{\value{equation}}%
	\stepcounter{equation}%
	\old@caption{#1}\relax%
}
\newcounter{intro}
\newtheorem{intro-conjecture}[intro]{Conjecture}
\newtheorem{intro-corollary}[intro]{Corollary}
\newtheorem{intro-theorem}[intro]{Theorem}
\def\Ddots{\mathinner{\mkern1mu\raise\p@
\vbox{\kern7\p@\hbox{.}}\mkern2mu
\raise4\p@\hbox{.}\mkern2mu\raise7\p@\hbox{.}\mkern1mu}}
\begin{document}

\title[Hodge symmetry and Lefschetz theorems for singular varieties]{Hodge symmetry and Lefschetz theorems for singular varieties}

\author{Sung Gi Park}
\address{Department of Mathematics, Princeton University, Fine Hall, Washington Road, Princeton, NJ 08544, USA}
\address{Institute for Advanced Study, 1 Einstein Drive, Princeton, NJ 08540, USA}
\email{sp6631@princeton.edu \,\,\,\,\,\,sgpark@ias.edu}

\author{Mihnea Popa}
\address{Department of Mathematics, Harvard University, 1 Oxford Street, Cambridge, MA 02138, USA}
\email{mpopa@math.harvard.edu}

\thanks{MP was partially supported by NSF grants DMS-2040378 and DMS-2401498.}

\subjclass[2010]{14B05, 14C30, 14F10, 32S35}

\date{\today}

\begin{abstract}
We prove new results concerning the topology and Hodge theory of singular varieties. A common theme is that concrete conditions on the complexity of the singularities, from a number of different perspectives, are closely related to the symmetries of the Hodge-Du Bois diamond. We relate this to the theory of rational homology manifolds, and characterize these among low-dimensional varieties with rational singularities.
\end{abstract}

\maketitle

\makeatletter
\newcommand\@dotsep{4.5}
\def\@tocline#1#2#3#4#5#6#7{\relax
  \ifnum #1>\c@tocdepth % then omit
  \else
    \par \addpenalty\@secpenalty\addvspace{#2}%
    \begingroup \hyphenpenalty\@M
    \@ifempty{#4}{%
      \@tempdima\csname r@tocindent\number#1\endcsname\relax
    }{%
      \@tempdima#4\relax
    }%
    \parindent\z@ \leftskip#3\relax
    \advance\leftskip\@tempdima\relax
    \rightskip\@pnumwidth plus1em \parfillskip-\@pnumwidth
    #5\leavevmode\hskip-\@tempdima #6\relax
    \leaders\hbox{$\m@th
      \mkern \@dotsep mu\hbox{.}\mkern \@dotsep mu$}\hfill
    \hbox to\@pnumwidth{\@tocpagenum{#7}}\par
    \nobreak
    \endgroup
  \fi}
\def\l@section{\@tocline{1}{0pt}{1pc}{}{\bfseries}}
\def\l@subsection{\@tocline{2}{0pt}{25pt}{5pc}{}}
\makeatother

%========================================================

\tableofcontents

\section{Introduction}\label{scn:intro}
This paper is concerned with new results regarding the topology and Hodge theory of complex varieties, contributing towards a deeper understanding of the behavior of the (appropriate) Hodge diamond of a singular projective variety.
We are especially interested in its symmetries, as a concrete reflection of the complexity of the singularities. In particular, we aim for criteria for singular cohomology groups to carry pure Hodge structure, satisfy Poincar\'e duality, or better, coincide with intersection cohomology.

All the varieties we consider in this paper are over $\C$. Given a singular projective variety $X$ of dimension $n$, the ordinary cohomology $H^{p+q}(X,\Q)$ has a mixed Hodge structure with Hodge filtration $F_\bullet$; here we use the convention where it is increasing. As in the case of smooth projective varieties, the Hodge numbers 
$\underline h^{p,q}$ are defined by
$$
\underline h^{p,q}(X):=\dim_\C \gr^F_{-p}H^{p+q}(X,\C).
$$
Due to Deligne-Du Bois theory, these are also known to be computed by the hypercohomologies of the Du Bois complexes $\DB_X^p$, in the sense that 
$$
H^{p,q} (X) := \gr^F_{-p}H^{p+q}(X,\C) = \HH^q(X,\DB_X^p).
$$
Therefore, to emphasize the singular setting, we sometimes refer to them as \emph{Hodge-Du Bois numbers}. As in the smooth setting, they organize themselves in a \emph{Hodge-Du Bois diamond}; see the figure in Section \ref{scn:HDB} for the convention.
It is natural to ask under what condition on the singularities we have symmetries that are familiar in the smooth setting; for instance when, in (very roughly) increasing order of strength, for some given $i$, or $p$ and $q$, we have:

\smallskip
\noindent 
(i)  purity of the Hodge structure or Poincar\'e duality for the singular cohomology $H^i( X, \Q)$.

\smallskip
\noindent
(ii) the Hodge symmetry
$$
\underline h^{p,q}(X)=\underline h^{q,p}(X)=\underline h^{n-p,n-q}(X)=\underline h^{n-q,n-p}(X).
$$

\smallskip
\noindent
(iii) $H^{p,q} (X) \simeq \mathit{IH}^{p,q} (X)$, the corresponding Hodge spaces for intersection cohomology.

\smallskip

We focus especially on (ii) in the statements, most often having to deal technically with (i) or (iii) in the process. As we will see later, the paper \cite{FL22} provides a very nice source of inspiration for thinking about the problem via (ii). 

Note that some symmetries of this type hold for relatively simple reasons: a standard fact is that if $\dim \Sing(X) = s$, then the Hodge structure on  $H^k (X, \Q)$ is pure for $k > n + s$; see \cite[Theorem 6.33]{PS08}. We thus have the symmetry 
$$\h^{p, q} (X) = \h^{q, p} (X) \,\,\,\,\,\,{\rm for ~all}\,\,\,\, p + q > n +s.$$
Beyond this, things become quite subtle. We will see that many different ingredients go into answering the question, according to the location under consideration on the diamond. As a toy example which is already non-trivial, we consider the boundary of the diamond: it can be shown that it has full symmetry under the assumption of rational singularities, i.e.
$$
X ~ has~rational~singularities \implies  \underline h^{0,q}(X)=\underline h^{q,0}(X)=\underline h^{n,n-q}(X)=\underline h^{n-q,n}(X) \,\,\,\,{\rm for} \,\,0\le q\le n. 
$$
The first two equalities follow from basic properties of the Du Bois complex of a rational singularity, but the last is new and part of a general result, Theorem \ref{thm:Hodge-symmetry-krat} below.

There are various ways of taking this analysis deeper into the interior of the Hodge-Du Bois diamond, based on the following invariants that turn out to be fundamental for 
the Hodge theory of singular varieties:

%\smallskip
%\noindent
%(i) the defect of $\Q$-factoriality of $X$.

\smallskip
\noindent
(i) the higher rational singularities level of $X$.

\smallskip
\noindent 
(ii) the local cohomological defect of $X$.

\smallskip
\noindent
(iii) the defect of (local, analytic) $\Q$-factoriality of $X$.

\smallskip
These invariants measure different things, but there are also subtle relationships between them that will emerge throughout. In this paper we focus especially on (i) and (ii), while 
in the companion paper \cite{PP25} we address the connection with (iii) and its consequences.\footnote{Note that this paper and \cite{PP25} appeared originally as a single paper 
\cite{PP}. Both contain new material, compared to the original preprint.}

\smallskip
\noindent
{\bf Symmetries via higher rationality.}
Friedman and Laza introduced the notion of \emph{higher rational singularities}  in \cite{FL24, FL22} and noted that it is well suited for Hodge symmetries.
In \cite[Theorem 3.24]{FL22}, they proved that when $X$ is a local complete intersection (LCI) with $m$-rational singularities, one has $\underline h^{p,q} (X) =\underline h^{q,p} (X) =\underline h^{n-p,n-q} (X)$, for all $0\le p\le m$ and $0\le q\le n$. Employing the same techniques, this was later generalized to arbitrary $X$ with only normal pre-$m$-rational singularities (see Definition \ref{definition:pre-k-DB-rational}) in \cite[Corollary 4.1]{SVV23}.  However, the number $\underline h^{n-q,n-p} (X)$ remained mysterious, and is indeed much more delicate to analyze.  We complete the picture in order to obtain full symmetry, answering a question of Laza:

\begin{intro-theorem}\label{thm:Hodge-symmetry-krat}
Let $X$ be a normal projective variety of dimension $n$, with pre-$m$-rational singularities. Then
$$
\underline h^{p,q} (X) =\underline h^{q,p} (X) =\underline h^{n-p,n-q} (X) =\underline h^{n-q,n-p} (X)
$$
for all $0\le p\le m$ and $0\le q\le n$. 

In particular, if $m \ge \frac{n-2}{2}$, then we have the full symmetry of the Hodge-Du Bois diamond; in this case, $X$ is in fact a rational homology manifold. 
\end{intro-theorem}

While we state it in this form for simplicity, the result -- just like any result in this paper involving (pre-)$m$-rationality -- holds under the significantly weaker technical hypothesis that the canonical map $\DB_X^p \to I \DB_X^p$ between the respective Du Bois and intersection Du Bois complexes of $X$ is an isomorphism for $p \le m$. This is condition 
$D_m$, discussed in Section \ref{scn:IC}, known to be implied by pre-$m$-rationality by a result from \cite{PSV24}. The conclusion is also stronger; see  Theorem \ref{thm: Hodge symmetry for k-rational}.  An equivalent condition is also considered in detail, and analyzed from a different perspective, in the more recent \cite{DOR}.

The last statement of Theorem \ref{thm:Hodge-symmetry-krat} is part of a more general result, Corollary \ref{cor: lcdef of k-rational}, which is a useful byproduct of the techniques of this paper:

\noindent
\emph{A variety satisfying condition $D_m$ is a rational homology manifold in codimension $2m+2$.}

An important point regarding these results can be roughly summarized as follows:  condition $D_m$ implies the coincidence between the cohomology groups $H^\bullet (X, \Q)$, and the respective intersection cohomology groups $\mathit{IH}^\bullet (X, \Q)$, in a certain range. Since intersection cohomology is well behaved, in this range our problem is trivial. The new and more subtle part is that, thanks to deep duality features in the theory of mixed Hodge modules, the actual range of good behavior is about twice as large as the naive one. The same applies in the case of Theorem \ref{thm:Lefschetz-hyp-krat} below.

\smallskip
\noindent
{\bf Symmetries via weak Lefschetz theorems.}
The objects and methods we appeal to in the course of proving Theorem \ref{thm:Hodge-symmetry-krat} also lead to new Lefschetz hyperplane theorems for singular (ambient) varieties. This subject has seen a number of developments, the best known appearing in the book \cite{GM88} by Goresky and MacPherson; see \cite[3.1.B]{Lazarsfeld} for a review of the main results.  We propose some stronger results for cohomology groups with rational coefficients.

The first is a simple, yet powerful, application of generalized Artin vanishing for constructible complexes, as in \cite{BBD}, and the Riemann-Hilbert correspondence for local cohomology as in \cite{BBLSZ} and \cite{RSW23}, recasting \cite{Ogus73}. (For a variant for general hyperplane sections, using a vanishing theorem in the coherent category from \cite{PS24}, see Remark \ref{rmk:DB-vanishing}.) Thus, this result should be seen as the outcome of the work of a number of authors, starting with Ogus. 

\begin{intro-theorem}\label{thm:Lefschetz-lcdef}
Let $X$ be an equidimensional projective variety of dimension $n$,  $D$ be an ample effective Cartier divisor on $X$, and $U = X \smallsetminus D$. Then the restriction map
$$
H^i(X,\Q)\to H^i(D,\Q)
$$
is an isomorphism for $i\le n-2-\lcdef(U)$ and injective for $i=n-1-\lcdef(U)$.
\end{intro-theorem}

Here $\lcdef(U)$ is the \emph{local cohomological defect} of $U$; see Section \ref{scn:lcdef}. It is equal to $0$ in the LCI case, and more generally we have the inequality $\lcdef (U) \le s (U)$, where $s(U)$ is the difference between the minimal number of (local) defining equations for $U$ and its codimension, for an embedding into a smooth variety. There are however plenty of classes of non-LCI varieties with defect equal to $0$: some examples are quotient singularities, or Cohen-Macaulay varieties of dimension up to three. A more complete list can be found in Example \ref{ex:def=0}, and its extensions in the Appendix. Thus Theorem \ref{thm:Lefschetz-lcdef} is a strict improvement of the well-known result of Goresky-MacPherson \cite[Section 2.2.2]{GM88} (which is in terms of $s (U)$) when cohomology with rational coefficients is considered.\footnote{Note that the results of \cite{GM88} hold with integral coefficients, and also for higher homotopy groups.} Its consequences to the purity of Hodge structures lead to a range of  ``horizontal" symmetry in the Hodge-Du Bois diamond based on the commutative algebra properties of $X$. Concretely, 
Corollary \ref{cor:purity} states that when $X$ is regular in codimension $k$,  the Hodge structure on $H^i (X, \Q)$ is pure for $i < k - \lcdef (X)$.

It turns out that Theorem \ref{thm:Lefschetz-lcdef} does not capture the full picture. Further information is obtained by considering higher rational singularities as well. While having (pre-)$m$-rational singularities imposes subtle restrictions on $\lcdef (X)$ (see Corollary \ref{cor: lcdef of k-rational}), which brings Theorem  \ref{thm:Lefschetz-lcdef}  into play, the next result says more starting in dimension $4$, as explained in Section \ref{scn:WL2}.

\begin{intro-theorem}\label{thm:Lefschetz-hyp-krat}
Let $X$ be an equidimensional projective variety of dimension $n$, and let $D$ be an ample effective Cartier divisor on $X$. If $U = X \sm D$ has normal pre-$m$-rational singularities, then the restriction map
$$
H^{p,q}(X)\to H^{p,q}(D)
$$
is an isomorphism when $p+q\le n-2$ and  $\min\left\{p,q\right\}\le m$. Furthermore, it is injective when $p+q\le n-1$ and $\min\left\{p,q-1\right\}\le m$.
\end{intro-theorem}

We emphasize that this result already contains new information for arbitrary varieties (take $m = -1$), or for varieties with rational singularities (take $m = 0$), for instance Lefschetz-type results for global $h$-differentials; see Remark \ref{rmk:m=-1}.

Besides the consequences in this paper, these Lefschetz theorems are also used in the companion paper \cite{PP25} in order to analyze the $\Q$-factoriality defect of $X$.

\smallskip
\noindent
{\bf Rational homology manifolds.}
An obvious case where we have full symmetry of the Hodge-Du Bois diamond is when $X$ is a \emph{rational homology manifold}. According to the standard definition, this means that the homology of the link of each singularity of $X$ is the same as that of a sphere. It is known (see \cite{BM83}) that this is equivalent to the fact that the canonical  map 
$$\Q_X [n] \to \IC_X,$$
from the shifted constant sheaf to the intersection complex of $X$  is an isomorphism (which extends to the respective Hodge modules). This implies immediately that 
$$H^\bullet (X, \Q) = \mathit{IH}^\bullet (X, \Q),$$
as Hodge structures, and it is well known that intersection cohomology is well behaved.

Understanding which singularities are rational homology manifolds is a much studied topic. According to the approach in this paper, it is natural to ask how far this condition is from the symmetry of the Hodge-Du Bois diamond, as well as what either condition means in terms of standard singularity notions. 
As one of the main concrete applications of the symmetry results discussed in this paper, we answer this explicitly in low dimension. The first step is a technical result on when the two conditions coincide, Theorem \ref{thm:RHM-symmetry}, that holds in arbitrary dimension. In the Appendix, we also give a criterion for the rational homology manifold condition that holds in arbitrary dimension, in terms of local analytic cohomological invariants.

It has been known since \cite{Mumford61} that surfaces with rational singularities are rational homology manifolds. We complete the picture in 
Theorem \ref{thm:RHM-surfaces}, by showing that if $X$ is a normal surface, the converse also holds if $X$ is assumed to be Du Bois. Moreover, the rational homology manifold condition is equivalent to the full symmetry of the Hodge-Du Bois diamond when $X$ is projective.

The picture becomes more complicated in dimension three. First, we note that to have Hodge symmetry, at least under the assumption $H^2 (X, \O_X) = 0$, any Du Bois (hence any log canonical) 
projective threefold must have rational singularities; see Lemma \ref{lem:vanishing-R1}. Restricting then to the world of rational singularities, the full answer is provided by:

\begin{intro-theorem}\label{thm:RHM-threefolds}
Let $X$ be a threefold with rational singularities. Then the following are equivalent:

\noindent
(i) $X$ is a rational homology manifold. 

\noindent
(ii) The local analytic Picard groups of $X$ are torsion.

\noindent
(iii) $X$ is locally analytically $\Q$-factorial.

If in addition $X$ is projective, then they are also equivalent to:

\noindent 
(iv) The Hodge-Du Bois diamond of $X$ satisfies full symmetry, i.e.
$$
\underline h^{p,q}(X)=\underline h^{q,p}(X)=\underline h^{3-p,3-q}(X)=\underline h^{3-q,3-p}(X)
$$
for all $0\le p,q\le 3$.
\end{intro-theorem}

For terminal threefolds, the equivalence between (i) and (iii) is implicitly shown by Koll\'ar \cite[Lemma 4.2]{Kollar89}, based on results of Flenner on links of singularities.
More generally, the same equivalence is proved for klt threefolds by Grassi-Weigand in \cite[Theorem 5.8]{GW2018}, this time based on the finiteness of local fundamental groups of such singularities shown in \cite{TX17} and \cite{Braun21}.  Note that in the projective case, the symmetry of the boundary of the diamond follows from Theorem \ref{thm:Hodge-symmetry-krat}; the relationship between the remaining Hodge-Du Bois numbers in the center of the diamond and $\Q$-factoriality is one of the main topics of \cite{PP25}. It leads in particular to another proof of this theorem, that goes through the global setting even for the local statements.

A byproduct of the proof, using 
Flenner's theorem \cite{Flenner81}  describing the local analytic class group of a rational singularity, is the following general consequence in arbitrary dimension:

\begin{intro-corollary}
(i) A rational homology manifold with rational singularities is locally analytically $\Q$-factorial.

\noindent
(ii) A locally analytically $\Q$-factorial variety with rational singularities is a rational homology manifold in codimension $3$.
\end{intro-corollary}

Going back to characterizing rational homology manifolds, the picture extends to fourfolds with rational singularities, only now local analytic $\Q$-factoriality is necessary (as seen in the Corollary above), but not sufficient; a higher cohomological invariant, namely the classifying group $H^2 (U^{\rm an} \smallsetminus \{x\}, \O_{U^{\rm an} \smallsetminus \{x\}}^*)$ for analytic gerbes (i.e. gerbes with band $\O_X^*$; see e.g. \cite[Theorem 5.2.8]{Brylinski}) on a punctured analytic germ around each point comes into play.\footnote{Here it is the group itself that interests us; the gerbe interpretation is used for ease of formulation, by analogy with the fact that 
the corresponding $H^1$ group parametrizes local analytic line bundles.}

\begin{intro-theorem}\label{thm:RHM-fourfolds}
Let $X$ be a fourfold with rational and locally analytically $\Q$-factorial singularities. Then the following are equivalent:

\noindent
(i) $X$ is a rational homology manifold. 

\noindent
(ii) $X$ has torsion local analytic gerbe groups.

If in addition $X$ is projective, then they are also equivalent to: 

\noindent 
(iii) The Hodge-Du Bois diamond of $X$ satisfies full symmetry, i.e.
$$
\underline h^{p,q}(X)=\underline h^{q,p}(X)=\underline h^{4-p,4-q}(X)=\underline h^{4-q,4-p}(X)
$$
for all $0\le p,q\le 4$.
\end{intro-theorem}

Recall also that by Theorem \ref{thm:Hodge-symmetry-krat} (or Theorem \ref{cor: lcdef of k-rational}), a normal fourfold with pre-$1$-rational singularities is always a rational homology manifold.

As mentioned above, the local aspects of the criteria in Theorems \ref{thm:RHM-threefolds} and \ref{thm:RHM-fourfolds} extend with an appropriate statement to arbitrary dimension; see Theorem \ref{thm:RHM-characterization}. When $X$ is projective and satisfies the rational homology manifold condition away from finitely many points, the equivalence with the symmetry of the Hodge-Du Bois diamond can also be included, subsuming the full statements of the two theorems; see Corollary \ref{cor:RHM-general}. However, the local conditions beyond the $H^2$-cohomology groups are harder to interpret geometrically.

Finally, besides the rational homology manifold condition, in the Appendix we state a general result expressing $\lcdef (X)$ by means of local analytic cohomological invariants, phrased in terms of what we call condition $L_k$. The main results are Theorems \ref{thm:lcdef-characterization2} and \ref{thm:lcdef-characterization}, derived from two main ingredients: another interpretation in terms of links of singularities from \cite{RSW23}, and Flenner's work \cite{Flenner81}. The statements extend, over the complex numbers, the well-known result of Ogus \cite{Ogus73} and Dao-Takagi \cite{DT16} that apply for small depth, and cover the case up to $\lcdef (X) \le n-4$. As an example, here is the simplest consequence of these theorems that goes beyond this:

\begin{intro-corollary}
Let $X$ be an $n$-dimensional variety, $n \ge 5$, with rational locally analytically $\Q$-factorial singularities. Then $\lcdef (X) \le n -5$ if and only if $X$ has torsion local analytic gerbe groups. 

In particular, if $X$ is a fivefold, then $\lcdef (X) = 0$ $\iff$ $X$ has torsion local analytic gerbe groups. 
\end{intro-corollary}

Note the similarity with the hypotheses of Theorem \ref{thm:RHM-fourfolds}. A general picture emerges, that is roughly phrased as follows: the condition needed to be a rational homology manifold in dimension $n$ coincides with the condition needed to have $\lcdef (X) = 0$ in dimension $n+1$. This statement needs to be modified appropriately in general,  but is literally true when $X$ has for instance a \emph{rational isolated singularity} at $x\in X$. In this case we have:
$$X ~{\rm is~an~RHM} ~({\rm resp.} ~\lcdef (X) = 0)  \iff X ~{\rm satisfies}~L_{n-2} ~({\rm resp.}~ L_{n-3}) \textrm{ along } \{x\}.$$
See the Appendix for more general statements. Recall also that when $X$ is projective, this type of criteria tell us when Theorem \ref{thm:Lefschetz-lcdef} works just as well as in the smooth case.

\smallskip
\noindent 
{\bf Example: Threefolds with isolated singularities.}
We conclude the Introduction by exemplifying the phenomena described in this paper (and in \cite{PP25}) in the case of threefolds with isolated singularities. 
This provides a clear illustration of how, by gradually imposing more and more singularity conditions, we have more and more Hodge symmetry, up to the case of rational homology manifolds. Let $X$ be one such threefold, and consider its Hodge-Du Bois diamond:

$$
\begin{array}{ccccccc}
& & & \h^{3,3} & & & \\
&&&&&& \\
& & \h^{3,2}& &\h^{2,3} & & \\
&&&&&& \\
&  \h^{3,1} & &\h^{2,2}& & \h^{1,3} &  \\
&&&&&& \\
\h^{3,0} & & \h^{2, 1} &  & \h^{1, 2} & &\h^{0,3} \\
&&&&&& \\
&\h^{2,0}   & &\h^{1,1}& & \h^{0,2} &  \\
&&&&&& \\
& & \h^{1,0}& &\h^{0,1} & & \\
&&&&&& \\
& & & \h^{0,0} & & &
\end{array}
$$

First the already known facts: the isolated singularities condition implies  that the ``top" part of the Hodge-Du Bois diamond, i.e. $H^{p.q} (X)$ with $p + q \ge 4$, 
coincides with intersection cohomology, and therefore has horizontal symmetry $\h^{p,q} = \h^{q,p}$. If in addition $X$ is \emph{normal}, then $H^1(X, \Q)$ is pure, and therefore 
$\h^{1,0} = \h^{0,1}$; see Example \ref{ex:normal-H1}.

Now for the new input: if $X$ is \emph{Cohen-Macaulay}, then $\lcdef (X) = 0$ by a result of Dao-Takagi (see Example \ref{ex:def=0}), and therefore $H^1(X, \Q) \simeq \mathit{IH}^1 (X, \Q)$ and $H^2 (X, \Q)$ is pure, by Corollary \ref{cor:purity}. The latter implies that $\h^{2,0} = \h^{0,2}$ hence, except for $H^3 (X, \Q)$, we have horizontal symmetry.
The former implies that $H^1 (X, \Q)$ is Poincar\'e dual to $H^5 (X, \Q)$, which gives
$$\h^{1,0} = \h^{0,1} = \h^{3,2} = \h^{2,3}.$$

We need more for further symmetries. If $X$ has \emph{rational singularities}, then Theorem \ref{thm:Hodge-symmetry-krat} implies 
$$\h^{2,0} = \h^{0,2} = \h^{3,1} = \h^{1,3} \,\,\,\,{\rm and}\,\,\,\, \h^{3,0} = \h^{0,3}.$$
(The hypothesis on rational singularities is necessary, for instance when $X$ is Du Bois with $\h^{0,2} = 0$.)

Hence under the rational singularities assumption, the only potential source of non-symmetry is the middle rhombus (even when the singularities are not isolated). 
Theorem \ref{thm:RHM-threefolds} states that its symmetry holds when $X$ is in addition \emph{locally analytically $\Q$-factorial}, and that this is in fact equivalent to $X$ being a rational homology manifold.  A more precise answer is provided in \cite{PP25}. First, we have that $\h^{1,1} = \h^{2,2}$ if and only if $X$ is \emph{$\Q$-factorial}. Finally, we have that in addition 
$\h^{2,1} = \h^{1,2}$ if and only if $X$ is locally analytically $\Q$-factorial.

\smallskip
\noindent
{\bf Acknowledgements.} 
We thank Bhargav Bhatt, Brad Dirks, Robert Friedman, Antonella Grassi, J\'anos Koll\'ar, Radu Laza, Lauren\c tiu Maxim, Rosie Shen, and Duc Vo for valuable discussions.

\section{Preliminaries}

\subsection{Du Bois complexes and higher singularities}\label{scn:DB}
For a complex variety $X$, the \emph{filtered de Rham complex} $(\DB_X^\bullet, F)$ is an object in the bounded derived category of filtered differential complexes on $X$, suggested by Deligne and introduced by Du Bois in \cite{DB81}  as a replacement for the standard de Rham complex on smooth varieties. For each $k \ge 0$, the shifted associated graded quotient
\[\DB^k_X : = \gr^k_F\, \DB^\bullet_X[k],\]
is an object in ${\bf D}^b_{\rm coh}(X)$, called the \emph{$k$-th Du Bois complex} of $X$. 
It can be computed in terms of the de Rham complexes on a hyperresolution of $X$.
Besides \cite{DB81}, one can find a detailed treatment in \cite[Chapter V]{GNPP} or \cite[Chapter 7.3]{PS08}.

Here are a few basic properties of Du Bois complexes that will be used throughout the paper:

\smallskip
\noindent
	$\bullet$ ~~For each $k\ge 0$, there is a canonical morphism $\Omega_X^k \to \DB_X^k$, which is an isomorphism if $X$ is smooth; here 
	$\Omega_X^k$ are the sheaves of K\"ahler differentials on $X$; see \cite[Section 4.1]{DB81} or \cite[Page 175]{PS08}.

\smallskip
\noindent
$\bullet$~~For each $k\ge 0$, the sheaf $\H^0\DB_X^k$ embeds in $f_* \Omega_{\widetilde{X}}^k$, where $f\colon \widetilde{X} \to X$ is a resolution of $X$, 
so in particular it is torsion-free; see \cite[Remark 3.8]{HJ14}.

\smallskip		
\noindent
$\bullet$~~	There exists a Hodge-to-de Rham spectral sequence 
	$$E^{p,q}_1 = \HH^q (X, \DB_X^p) \implies H^{p + q} (X, \C),$$
	which degenerates at $E_1$ if $X$ is projective; see \cite[Theorem 4.5(iii)]{DB81} or \cite[Proposition 7.24]{PS08}. The filtration on each $H^i (X, \C)$ given by this spectral sequence coincides with the Hodge filtration associated to Deligne's mixed Hodge structure on cohomology.

\medskip
The Du  Bois complexes can be used to produce a hierarchy of singularities, starting with the standard notions of Du Bois and rational singularities.
Following \cite{MOPW,JKSY,FL22, MuP22b}, if $X$ is a local complete intersection (LCI) subvariety of a smooth variety $Y$, then it is said to have \textit{$m$-Du Bois singularities} if the canonical morphisms $\Omega^p_X \to \DB^p_X$ are isomorphisms for all $0\le p\le m$, and 
 \textit{$m$-rational singularities} if the canonical morphisms $\Omega^p_X \to \DD_X(\DB^{n-p}_X)$  are isomorphisms for all $0\le p\le m$, where 
 $\DD_X (\cdot) : = R\H om (\, \cdot \, , \omega_X)$.
 
For non-LCI varieties these conditions are too restrictive, as explained in \cite{SVV23}, where new definitions are introduced in the general setting.
However, it is often sufficient to consider weaker notions obtained by removing the conditions in cohomological degree 0.

\begin{defn}\label{definition:pre-k-DB-rational}
	One says  that $X$ has \textit{pre-$m$-Du Bois} singularities if 
		\[ \H^i\DB_X^p=0 \,\,\,\,{\rm for~ all } \,\,\,\,i>0 \,\,\,\, {\rm and} \,\,\,\, 0\le p\le m,\]
and that it has \textit{pre-$m$-rational} singularities if
		\[ \H^i (\DD_X(\DB^{n-p}_X))=0 \,\,\,\,{\rm for~ all } \,\,\,\,i>0 \,\,\,\, {\rm and} \,\,\,\, 0\le p\le m.\]
\end{defn}

General $m$-Du Bois and $m$-rational singularities require further conditions; see \cite{SVV23}. They agree with the classical notions when $m=0$, and  with the definitions mentioned above in the local complete intersection case.

\iffalse

; see \cite[Proposition 5.5, 5.6]{SVV23} for more details.

\begin{defn} 
	We say that $X$ has \textit{$k$-Du Bois singularities} if it is seminormal, and 
	\begin{enumerate}
		\item $\codim_X \Sing(X) \ge 2k+1$;
		\item $X$ has pre-$k$-Du Bois singularities;
		\item $\H^0\DB_X^p$ is reflexive, for all $p \le k$.
	\end{enumerate}
	We say that $X$ has \textit{$m$-rational singularities} if it is normal, and
	\begin{enumerate}
		\item $\codim_X \Sing(X) > 2k+1$;
		\item $X$ has pre-$m$-rational singularities. 
	\end{enumerate}
\end{defn}

\fi

Later we will discuss the relationship between these types of singularities and intersection complexes, which in turn will reveal a precise connection with the topology of the manifold. One significant consequence, see Theorem \ref{thm:Hodge-symmetry-krat}, will be that a variety with normal pre-$m$-rational singularities such that $m \ge (n-2)/2$ is a rational homology manifold.

\subsection{Local cohomological defect}\label{scn:lcdef}
Let $X$ be an equidimensional variety of dimension $n$.
If $Y$ is a smooth variety containing $X$ (locally), the local cohomological dimension of $X$ in $Y$ is
$${\rm lcd} (X, Y) := \max ~\{q ~|~ \H^q_X \O_Y \neq 0\}.$$
Here $\H^q_X \O_Y$  is the $q$-th local cohomology sheaf of $\O_Y$ along $X$. These sheaves also satisfy the property 
$$\codim_Y X := \min ~\{q ~|~ \H^q_X \O_Y \neq 0\}.$$
A discussion of this circle of ideas can be found for instance in \cite[Section 2.2]{MuP22a}, where it is explained in particular that 
\begin{equation}\label{eqn:lcd-equations}
{\rm lcd} (X, Y) \le r (X, Y),
\end{equation}
where $r (X, Y)$ is the minimal number of defining equations for $X$ in $Y$.

As in \cite{PS24}, \cite{PSV24}, we consider the \emph{local cohomological defect} ${\rm lcdef} (X)$ of $X$ as
$${\rm lcdef} (X) : = {\rm lcd} (X, Y) - \codim_Y X.$$
The topological characterization of ${\rm lcd} (X, Y)$ in \cite{Ogus73}, or its holomorphic characterization in \cite{MuP22a}, imply that $\lcdef (X)$ 
depends only on $X$, and not on the embedding, and that 
 $0 \le  {\rm lcdef} (X) \le n$. 
 
For instance, as explained in \cite[Section 2]{PS24}, a reinterpretation of the characterization of local cohomological dimension in \cite[Theorem E]{MuP22a}, that makes the relationship with Hodge theory apparent, is stated as follows:

\begin{thm}\label{cor:depth-DB-complex}
We have the identity 
$${\rm lcdef} (X) = n -  \underset{k \ge 0}{\min}~ \{ {\rm depth}~\DB_X^k + k \}.$$
\end{thm}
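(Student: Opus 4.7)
The plan is to reduce the identity to its local content and invoke \cite[Theorem E]{MuP22a}, reinterpreting the Ext-theoretic conclusion of that result through local duality on a smooth ambient.

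First, I would choose a local embedding $\iota\colon X\hookrightarrow Y$ into a smooth variety $Y$ of dimension $N$, so that $\codim_Y X=N-n$. By Ogus's theorem, $\lcdef(X)=\mathrm{lcd}(X,Y)-(N-n)$ is independent of $\iota$, so the identity to be proved is equivalent to
$$\mathrm{lcd}(X,Y)\;=\;N\;-\;\min_{k\ge 0}\{\,\depth\,\DB_X^k+k\,\}.$$

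Second, I would apply the identification in \cite{MuP22a} of the Hodge filtration on the local cohomology sheaves $\H^q_X\O_Y$ with data coming from Du Bois complexes: up to appropriate shifts, the graded pieces of $\H^q_X\O_Y$ are computed by cohomology sheaves of $\dual_Y(\DB_X^k)=R\Hom_{\O_Y}(\DB_X^k,\w_Y)$. As made precise in their Theorem E, this yields an equivalence, for each integer $c$,
$$\mathrm{lcd}(X,Y)\le c\;\Longleftrightarrow\; \mathcal{E}xt^{\,i}_{\O_Y}(\DB_X^k,\w_Y)=0 \text{ for all } k\ge 0 \text{ and all } i>c+k,$$
where the shift by $k$ reflects the Tate twist inherent in passing between the Hodge-filtered and the de Rham pictures.

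Third, I would translate the Ext vanishing into a depth statement via Grothendieck local duality on the smooth $Y$. For any complex $\mathcal{F}\in\mathbf{D}^b_{\mathrm{coh}}(Y)$ with support contained in $X$, local duality asserts
$$\depth\,\mathcal{F}\;=\;N\;-\;\max\{\,i\,:\,\mathcal{E}xt^{\,i}_{\O_Y}(\mathcal{F},\w_Y)\neq 0\,\}.$$
Applied with $\mathcal{F}=\DB_X^k$, the Step~2 criterion becomes $\depth\,\DB_X^k\ge N-c-k$ for every $k\ge 0$, i.e.\ $\min_k\{\depth\,\DB_X^k+k\}\ge N-c$. Letting $c$ range over integers with $\mathrm{lcd}(X,Y)\le c$ and taking the smallest yields the displayed equality; subtracting $N-n$ finishes the proof.

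The main obstacle is the bookkeeping: one must track precisely the Tate twist and cohomological shift when importing \cite[Theorem E]{MuP22a}, and verify that the minimum in the formula is actually attained rather than only bounded. The latter follows from the fact that $\H^{\mathrm{lcd}(X,Y)}_X\O_Y$ is nonzero by the very definition of $\mathrm{lcd}$, which through the identification above forces a nontrivial $\mathcal{E}xt$-sheaf in the predicted slot for some value of $k$, thereby saturating the inequality.
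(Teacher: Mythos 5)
Your proof is correct and follows precisely the route the paper itself indicates: the paper gives no independent argument for this theorem but cites it as a reinterpretation, explained in \cite[Section~2]{PS24}, of the local cohomological dimension criterion \cite[Theorem~E]{MuP22a}. Your reduction to a local smooth embedding $X\hookrightarrow Y$, the invocation of Theorem~E in the form $\mathrm{lcd}(X,Y)\le c \Leftrightarrow \mathcal{E}xt^{i}_{\O_Y}(\DB_X^k,\omega_Y)=0$ for $k\ge 0$, $i>c+k$, the conversion via local duality to $\depth\DB_X^k\ge N-c-k$, and the attainment argument are exactly the reinterpretation those references carry out. One small presentational point: the identity $\depth\mathcal{F}=N-\max\{i:\mathcal{E}xt^i_{\O_Y}(\mathcal{F},\omega_Y)\ne0\}$ is better regarded as the definition of depth for a complex in $\mathbf{D}^b_{\rm coh}$ adopted in \cite{MuP22a, PS24} (equivalently, via local duality, the minimum over closed points of the usual local-cohomological depth of the stalks), rather than as something local duality ``asserts'' about an a priori given quantity; stating this convention up front would remove any ambiguity about which notion of depth is in play.
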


Here we will be more often concerned with the topological interpretation of ${\rm lcdef} (X)$, more precisely with its more modern interpretation via Riemann-Hilbert correspondence and the perverse $t$-structure on the bounded derived category $D^b_c(X,\Q)$ of $\Q$-sheaves on $X$ with constructible cohomologies, as formulated in \cite{RSW23} and  \cite{BBLSZ}.

\begin{thm}[{\cite[Theorem 2.13]{Ogus73}, \cite[Theorem 1]{RSW23}, \cite[Section 3.1]{BBLSZ}}]\label{thm:lcdef-top}
We have the identity 
$${\rm lcdef} (X) = {\rm max}~\{j \ge 0 ~|~ {}^p\H^{-j} (\Q_X [n]) \neq 0\}.\footnote{We include the shift for compatibility with the theory of perverse sheaves and Hodge modules, used later on.}$$
\end{thm}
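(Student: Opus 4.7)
The plan is to identify the two maxima via the Riemann--Hilbert correspondence together with Verdier duality. Since the right-hand side is manifestly intrinsic to $X$, and the left-hand side is already known to be embedding-independent (as recalled just before the theorem), I fix a local closed embedding $i\colon X \hookrightarrow Y$ into a smooth variety $Y$ of dimension $N$, so that $\codim_Y X = N-n$, and prove the identity one such embedding at a time.

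First I pass $R\Gamma_{[X]}\O_Y$ through the de Rham functor $\DR_Y$, normalized so that $\DR_Y(\O_Y) = \Q_Y[N]$. This functor is $t$-exact between the natural $t$-structure on regular holonomic $\D_Y$-complexes and the perverse $t$-structure on $D^b_c(Y,\Q)$, and it intertwines $R\Gamma_{[X]}$ on the $\D$-module side with $R\Gamma_X = i_*i^!$ on the constructible side. Consequently
$$
\DR_Y(\H^q_X \O_Y) \;\simeq\; {}^p\H^q\bigl(i_*i^!\Q_Y[N]\bigr),
$$
so ${\rm lcd}(X,Y) = \max\{q \ge 0 : {}^p\H^q(i_*i^!\Q_Y[N]) \ne 0\}$.

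Next I rewrite $i^!\Q_Y[N]$ in terms of $\Q_X[n]$. Since $Y$ is smooth, $\Q_Y[N]$ is Verdier self-dual; combined with the closed-immersion identity $i^!\mathbb{D}_Y = \mathbb{D}_X i^*$, this gives
$$
i^!\Q_Y[N] \;=\; \mathbb{D}_X(\Q_X[n])\bigl[-(N-n)\bigr].
$$
Now $i_*$ is $t$-exact for the perverse $t$-structure, and Verdier duality swaps ${}^p\H^k$ with ${}^p\H^{-k}$, so
$$
{}^p\H^q\bigl(i_*i^!\Q_Y[N]\bigr) \;\simeq\; i_*\mathbb{D}_X\bigl({}^p\H^{-(q-(N-n))}(\Q_X[n])\bigr),
$$
which is nonzero iff ${}^p\H^{-(q-(N-n))}(\Q_X[n]) \ne 0$. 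Since $X$ is equidimensional of dimension $n$, $\Q_X[n] \in {}^p D^{\le 0}$, so these perverse cohomologies vanish whenever $q-(N-n) < 0$. Setting $j = q - (N-n)$ yields ${\rm lcd}(X,Y) = (N-n) + \max\{j \ge 0 : {}^p\H^{-j}(\Q_X[n]) \ne 0\}$; subtracting $\codim_Y X = N-n$ gives the stated identity.

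The main technical care lies in tracking shift conventions: the $[N]$ that renders $\DR_Y$ perverse $t$-exact, the normalization of Verdier duality making $\Q_Y[N]$ self-dual on the smooth ambient $Y$, and the correct compatibilities of $i_*$ and $\mathbb{D}$ with the perverse $t$-structure. No single step is deep in isolation --- each ingredient is a classical consequence of Riemann--Hilbert and six-functor formalism --- but their assembly is exactly what converts the algebraic depth-type invariant $\lcdef(X)$ into the topological measurement of how far $\Q_X[n]$ deviates from being a single perverse sheaf (the rational homology manifold case).
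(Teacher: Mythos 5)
The paper does not prove this theorem; it is stated as a known result with citations to Ogus, \cite{RSW23}, and \cite{BBLSZ}. Your argument is correct and is precisely the modern Riemann--Hilbert argument used in \cite{RSW23} and \cite{BBLSZ}: apply the $t$-exact de Rham functor to identify $\H^q_X\O_Y$ with ${}^p\H^q(i_*i^!\Q_Y[N])$, then use Verdier self-duality of $\Q_Y[N]$ and the exchange $i^!\mathbb{D}_Y = \mathbb{D}_X i^*$ to convert to perverse cohomology of $\Q_X[n]$, with the codimension shift accounting for $\codim_Y X$. The only minor point you leave implicit is that the Riemann--Hilbert correspondence is over $\C$, but since $\C$ is faithfully flat over $\Q$, the nonvanishing of ${}^p\H^{-j}(\Q_X[n])$ agrees with that of ${}^p\H^{-j}(\C_X[n])$, so the identification is harmless.
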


\medskip

Note that thanks to ($\ref{eqn:lcd-equations}$) we have
 \begin{equation}\label{eqn:lcd-equations-2}
 {\rm lcdef} (X) \le s (X) : = r (X, Y) - \codim_Y X.
 \end{equation}
 The quantity on the right hand side measures the defect of being a local complete intersection. When $X$ is so, we obviously have ${\rm lcdef}(X)= 0$. We will see 
that in many respects the topology of $X$ is best behaved when this condition is satisfied, or in any case when $\lcdef (X)$ is small. Interestingly, this can happen for varieties 
 that may be quite far from being local complete intersections.

\begin{ex}[{\bf Varieties with ${\rm lcdef}(X)= 0$}]\label{ex:def=0}
Besides local complete intersections, the condition ${\rm lcdef} (X) = 0$ it is known to hold 
when $X$ has quotient singularities (see \cite[Corollary 11.22]{MuP22a}, or use the known fact that $X$ is a rational homology manifold), for affine varieties with Stanley-Reisner coordinate algebras that are Cohen-Macaulay \cite[Corollary 11.26]{MuP22a}, and for arbitrary Cohen-Macaulay surfaces and threefolds in \cite[Remark p.338-339]{Ogus73} and \cite[Corollary 2.8]{DT16} respectively.

More generally, Dao-Takagi \cite[Theorem 1.2]{DT16} show that for $k \le 3$ one has 
$$
\mathrm{depth} ~\O_X \ge k \Longrightarrow \lcdef(X)\le n-k  
$$
This does not continue to hold for $k \ge 4$, but for instance it implies that a Cohen-Macaulay fourfold satisfies $\lcdef (X) \le 1$. Moreover, in this case we do have 
$\lcdef (X) = 0$ if the local analytic Picard groups are torsion \cite[Theorem 1.3]{DT16}.  

Section \ref{scn:appendix} is devoted to further similar criteria in higher dimension.
\end{ex}

\subsection{Intersection complex, rational homology manifolds, condition $D_m$}\label{scn:IC}
Let $X$ be a complex variety of dimension $n$. The trivial Hodge module $\Q^H_X[n]$ is an object in the derived category of mixed Hodge modules on $X$, with cohomologies in degrees $\le 0$. We also have the intersection complex ${\rm IC}_X^H$, a simple pure Hodge module of weight $n$.
The top degree of the weight filtration on $\H^0  \Q^H_X[n]$ is $n$, and there is a composition of quotient morphisms
\begin{equation}\label{eqn:gamma}
\gamma_X: \Q^H_X[n]\to  \H^0  \Q^H_X[n]\to \IC_X^H \simeq \gr^W_n  \H^0  \Q^H_X[n].
\end{equation}
For details on all of this, please see \cite[Section 4.5]{Saito90}.

Thanks to Theorem \ref{thm:lcdef-top}, we know that the degree of the lowest nontrivial cohomology of $\Q^H_X[n]$ is equal to $- \lcdef (X)$, i.e.
\begin{equation}\label{eqn:lcdef-MHM}
\lcdef (X) = {\rm max}~ \{j \ge 0 ~|~ \H^{-j} (\Q^H_X[n]) \neq 0\}.
\end{equation}

\medskip

Saito \cite[Theorem 0.2]{Saito00} proved that the Du Bois complexes of $X$ can be identified up to shift with the graded pieces of the de Rham complex of $\Q^H_X [n]$, namely
$$
\DB^k_X\isom\gr^F_{-k}\DR(\Q^H_X[n])[k-n].
$$
We consider the formal analogue in the case of the intersection complex.

\begin{defn}
The \textit{intersection Du Bois complex} $I\DB^k_X$ of an equidimensional variety $X$ of dimension $n$ is the graded de Rham complex of the Hodge module $\IC^H_X$ shifted by $[k-n]$:
$$
I\DB^k_X := \gr^F_{-k}\DR(\IC_X^H)[k-n].
$$
\end{defn}

We note an important property of $I\DB_X^k$, used in \cite{KS21} to prove results on the extension of holomorphic forms (see also \cite{Park23}). 

\begin{lem}[{\cite[(8.4.1) and Proposition 8.1]{KS21}}]
\label{prop: 0th cohomology of IDB}
Let $X$ be an equidimensional variety of dimension $n$. Then for any resolution of singularities $f:\widetilde X\to X$, we have that  $I\DB^k_X$ is a direct summand 
of $R f_*\Omega^k_{\widetilde X}$, so in particular its cohomologies are supported in non-negative degrees. Moreover, there is an isomorphism
$$
f_*\Omega^k_{\widetilde X}\isom \H^0I\DB^k_X.
$$
\end{lem}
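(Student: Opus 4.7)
My approach is to derive both assertions from Saito's decomposition theorem applied to the proper birational morphism $f$, combined with the compatibility of the graded de Rham functor with proper pushforward. Since $\widetilde X$ is smooth, $\Q^H_{\widetilde X}[n]=\IC^H_{\widetilde X}$ is a polarizable pure Hodge module, so the decomposition theorem yields a splitting
$$
f_*\Q^H_{\widetilde X}[n]\isom \bigoplus_{i\in\Z}\bigoplus_{\alpha} M_{i,\alpha}[-i]
$$
in the derived category of mixed Hodge modules on $X$, with each $M_{i,\alpha}$ a pure Hodge module of strict support an irreducible closed subset of $X$. Because $f$ is birational and $X$ is equidimensional of dimension $n$, the summands of full support are (one per irreducible component, concentrated at perverse degree $i=0$) precisely the factors of $\IC^H_X$; every other $M_{i,\alpha}$ has strict support contained in a proper closed subvariety of $X$.

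Next I apply $\gr^F_{-k}\DR$, which commutes with $f_*$ by Saito's theorem, and use the identification $\gr^F_{-k}\DR(\Q^H_{\widetilde X}[n])\isom \Omega^k_{\widetilde X}[n-k]$ valid on the smooth $\widetilde X$. After shifting by $[k-n]$, this gives a splitting
$$
Rf_*\Omega^k_{\widetilde X}\isom I\DB^k_X\oplus R
$$
in ${\bf D}^b_{\rm coh}(X)$, where $R$ is a finite direct sum of shifts of complexes of the form $\gr^F_{-k}\DR(M_{i,\alpha})$ with each $M_{i,\alpha}$ supported on a proper closed subvariety $Z_{i,\alpha}\subsetneq X$. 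This already realizes $I\DB^k_X$ as a direct summand of $Rf_*\Omega^k_{\widetilde X}$. Since $f$ is proper and $\Omega^k_{\widetilde X}$ sits in cohomological degree $0$, the cohomology sheaves of $Rf_*\Omega^k_{\widetilde X}$ vanish in negative degrees, so the same holds for its direct summand $I\DB^k_X$.

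For the isomorphism $f_*\Omega^k_{\widetilde X}\isom \H^0 I\DB^k_X$, I take $\H^0$ of the decomposition. On the left we obtain $f_*\Omega^k_{\widetilde X}$, while on the right we obtain $\H^0 I\DB^k_X$ plus $\H^0R$, which is a direct sum of coherent sheaves each supported on some proper subvariety $Z_{i,\alpha}$, hence an $\O_X$-torsion sheaf. On the other hand, $f_*\Omega^k_{\widetilde X}$ is torsion-free on the reduced equidimensional variety $X$: the sheaf $\Omega^k_{\widetilde X}$ is locally free on the smooth $\widetilde X$, and the direct image of a torsion-free sheaf under a dominant proper morphism of reduced schemes is torsion-free (argue component-wise if $X$ is reducible). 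Consequently $\H^0 R$ vanishes, yielding the claimed isomorphism.

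\textbf{Main obstacle.} The substantive technical points are (a) extracting from the decomposition theorem the statement that the full-support summand at perverse degree $0$ is precisely $\IC^H_X$, with all remaining summands having proper strict support—this is where birationality of $f$ enters—and (b) torsion-freeness of $f_*\Omega^k_{\widetilde X}$, which is what kills the extra direct summand $\H^0 R$ in cohomological degree $0$ regardless of what its cohomology sheaves turn out to be.
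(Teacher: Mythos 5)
Your proof is correct and follows the expected route. The paper itself does not write out a proof — it simply cites Kebekus–Schnell, whose argument is precisely this application of Saito's decomposition theorem for the pushforward of $\Q^H_{\widetilde X}[n]$ under a resolution, combined with the compatibility of $\gr^F_p\DR$ with proper pushforward and the identification $\gr^F_{-k}\DR(\Q^H_{\widetilde X}[n])\simeq\Omega^k_{\widetilde X}[n-k]$ on the smooth source. Your handling of both main points is sound: (a) that the full-support perverse-degree-$0$ constituents of $f_*\Q^H_{\widetilde X}[n]$ assemble to $\IC^H_X$ while all remaining summands have strict support in the exceptional locus, which has positive codimension in the equidimensional $X$; and (b) that $f_*\Omega^k_{\widetilde X}$ is torsion-free (argued component-wise for reducible $X$), so its torsion direct summand $\H^0 R$ must vanish, yielding $f_*\Omega^k_{\widetilde X}\simeq\H^0 I\DB^k_X$.
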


Since any associated graded of the de Rham complex of a Hodge module with respect to the Hodge filtration lives in nonpositive degrees, it is clear by definition 
that we also have 
$$\H^i I\DB_X^k = 0 \,\,\,\,\,\,{\rm for }\,\,\,\, i > n -k.$$
Several other properties of these complexes are studied in \cite{PSV24}.

\medskip

A \emph{rational homology manifold (or RHM)} is classically defined by the fact that the homology of the link of each singularity of $X$ is the same as that of a sphere. 
It is however known (see \cite{BM83}) that this is equivalent to the fact that the map $\gamma_X\colon \Q^H_X[n]\to  \IC_X^H$ is an isomorphism. We thus have the implication
$$X {\rm~ is~an~RHM} \implies \lcdef (X) = 0.$$
This is usually not an equivalence. For instance, any normal surface has $\lcdef (X) =0$; however, when $X$ is Du Bois, the RHM condition is equivalent to $X$ having rational 
singularities (see Theorem \ref{thm:RHM-surfaces}).

Applying the functor $\gr^F_{-k} \DR$ to $\gamma_X$ in ($\ref{eqn:gamma}$) yields morphisms 
\begin{equation}\label{DB-to-IC}
\gamma_k: \DB^k_X \to I\DB^k_X.
\end{equation}

It is straightforward to see the following:

\begin{lem}\label{lem:RHM-gamma}
The variety $X$ is a rational homology manifold if and only if $\gamma_k$ is an isomorphism for all $k$. 
\end{lem}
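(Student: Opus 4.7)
The ``only if'' direction is immediate: by definition $\gamma_k = \gr^F_{-k}\DR(\gamma_X)[k-n]$, and applying a triangulated functor to an isomorphism yields an isomorphism.

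For the converse, the strategy is to exploit the conservativity of the family of graded de Rham functors $\{\gr^F_{-k}\DR\}_{k}$ on $D^b({\rm MHM}(X))$. I would form the cone $C = {\rm Cone}(\gamma_X)$ in $D^b({\rm MHM}(X))$. Since each $\gr^F_{-k}\DR$ is a triangulated functor, the hypothesis that every $\gamma_k$ is an isomorphism forces $\gr^F_{-k}\DR(C) \simeq 0$ in $D^b_{\rm coh}(X)$ for every $k \ge 0$; the same holds for $k < 0$ or $k > n$ automatically since both $\DB_X^k$ and $I\DB_X^k$ vanish in that range.

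I would next pass to the underlying filtered $\D$-module complex $(M^\bullet, F)$ of $C$. The Hodge filtration on each cohomology module is bounded by strictness of the Hodge filtration for mixed Hodge modules, and persists at the level of $D^b({\rm MHM}(X))$. Vanishing of all graded pieces $\gr^F_{-k}\DR(M^\bullet, F)$ together with this boundedness implies, via the standard spectral sequence comparing a filtered complex with its associated graded, that $\DR(M^\bullet) \simeq 0$ in $D^b_c(X,\C)$. Riemann--Hilbert then yields $M^\bullet \simeq 0$, and since a morphism of mixed Hodge modules is an isomorphism precisely when the underlying $\D$-module morphism is, we conclude $C = 0$ in $D^b({\rm MHM}(X))$. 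Thus $\gamma_X$ is an isomorphism, and $X$ is a rational homology manifold by the characterization recalled just before the statement.

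\emph{Main obstacle.} The one subtle point is the conservativity assertion itself: passing from vanishing of all graded de Rham pieces $\gr^F_{-k}\DR(C)$ to vanishing of the filtered de Rham complex, and thence to vanishing of $C$. This is the step where strictness and boundedness of the Hodge filtration, together with Riemann--Hilbert, are genuinely used rather than purely formal triangulated-category reasoning, and is worth verifying carefully.
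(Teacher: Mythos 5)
Your proof is correct and fills in precisely the ``straightforward'' argument the paper leaves implicit: the only if direction is formal, and the if direction is the conservativity of the family $\{\gr^F_{-k}\DR\}_{k}$ on $D^b{\rm MHM}(X)$, established via strictness of the Hodge filtration, the boundedness-below and exhaustiveness of the (good) filtration on the cohomology D-modules, and Riemann--Hilbert. One small phrasing nit: the boundedness-below and exhaustiveness of the Hodge filtration are properties of good filtrations, not consequences of strictness; strictness is what makes the filtered-to-graded spectral sequence degenerate so that vanishing of graded cohomologies propagates to the cohomology of $\DR(M^\bullet)$ itself --- but this is cosmetic and does not affect the correctness of the argument.
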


A crucial point will be to study what happens when the $\gamma_k$ are isomorphisms only in a certain range. It will be convenient to introduce notation that keeps track of this, and will be used repeatedly throughout the paper.

\begin{defn}[\bf{Condition $D_m$}]
We say that $X$ satisfies condition $D_m$ if the morphisms $\gamma_p: \DB^p_X \to I\DB^p_X$ are isomorphisms for all $0 \le p \le m$. For simplicity, we will sometimes write $\DB^p_X = I\DB^p_X$ to mean this. 
\end{defn}

\begin{rmk}[Notation]\label{rmk:terminology}
The reason for this notation is the following: it is quite well known by now that the condition in the definition above is equivalent to requiring that the natural 
duality morphisms 
$$\DB_X^p \to \DD_X(\DB^{n-p}_X)$$ 
are isomorphisms for $0 \le p \le m$; see e.g. the proof of \cite[Proposition 9.4]{PSV24}. Thus $D_m$, which is perhaps more natural in the context of the latter condition, stands for \emph{duality up to level $m$}. In the previous version of this paper \cite{PP} we called this condition $(*)_m$, but the current notation seems more suggestive. Note that condition $D_m$ is also studied, in the latter form, and in terms of an invariant denoted ${\rm HRH} (X)$, in the recent \cite{DOR}.

In particular, note that $X$ has $m$-rational singularities if and only if it has $m$-Du Bois singularities and satisfies condition $D_m$, and 
having $m$-rational singularities is typically strictly stronger than condition $D_m$. As a toy illustration, a variety with rational singularities satisfies $\O_X \simeq \DB_X^0 \simeq I \DB_X^0 \simeq R f_* \O_{\widetilde X}$, while condition $D_0$ only says that $\DB_X^0 \simeq 
I \DB_X^0 \simeq Rf_* \O_{\widetilde X}$, where $f \colon \widetilde X \to X$ is a resolution. Moreover,  we have 

\begin{prop}[{\cite[Proposition 9.4]{PSV24}}]\label{prop:krat*}
If $X$ is normal, with pre-$m$-rational singularities, then $X$ satisfies condition $D_m$.
\end{prop}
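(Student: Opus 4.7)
My plan is first to translate the question into something more tractable via Remark \ref{rmk:terminology}: condition $D_m$ is equivalent to the canonical duality morphisms $\delta_p \colon \DB_X^p \to \DD_X(\DB_X^{n-p})$ being isomorphisms for $0 \le p \le m$. Pre-$m$-rationality is precisely the statement $\H^{>0}\DD_X(\DB_X^{n-p}) = 0$ for $0 \le p \le m$, so the target of $\delta_p$ is already known to be concentrated in cohomological degree $0$. Moreover, pre-$m$-rationality together with normality implies pre-$m$-Du Bois, so the source $\DB_X^p$ is a sheaf as well; it therefore remains to check that $\H^0\delta_p$ is an isomorphism of coherent sheaves for each $0 \le p \le m$.

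Next I would realize $\delta_p$ as the composition
$$\DB_X^p \xrightarrow{\gamma_p} I\DB_X^p \xrightarrow{\sim} \DD_X(I\DB_X^{n-p}) \xrightarrow{\DD_X(\gamma_{n-p})} \DD_X(\DB_X^{n-p}),$$
where the middle isomorphism is the incarnation at the level of intersection Du Bois complexes of the Verdier self-duality of the pure Hodge module $\IC_X^H$, obtained by applying $\gr^F_{-p}\DR$. Compatibility of this factorization with the canonical duality map is a routine check in Saito's formalism. This reduces the problem to controlling the cones of $\gamma_p$ and $\DD_X(\gamma_{n-p})$, or equivalently showing that $X$ satisfies $D_m$ in its original formulation.

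To that end, I would apply $\gr^F_{-p}\DR(\cdot)[p-n]$ to the distinguished triangle $K \to \Q_X^H[n] \to \IC_X^H$ arising from $\gamma_X$, obtaining a triangle
$$F_p \to \DB_X^p \xrightarrow{\gamma_p} I\DB_X^p$$
with $F_p := \gr^F_{-p}\DR(K)[p-n]$; thus $\gamma_p$ is an isomorphism precisely when $F_p = 0$. The Saito duality formula exchanging $\gr^F_{-p}\DR$ with $\gr^F_p\DR$ on the dual Hodge module, together with the self-duality of $\IC_X^H$, converts the pre-$m$-rationality vanishing of $\H^{>0}\DD_X(\DB_X^{n-p})$ into cohomological vanishing for $F_p$ in all degrees in the relevant range. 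This should force $F_p = 0$ for $0 \le p \le m$, giving condition $D_m$.

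I expect the main obstacle to be the degree bookkeeping: the various shifts $[p-n]$, the interplay between the perverse and standard $t$-structures on graded de Rham pieces, and the asymmetry between the range $0 \le p \le m$ (where the hypothesis is given) and the dual range $n-m \le q \le n$ (produced by the factorization). The base case $p=0$ is where the normality hypothesis enters concretely: one needs the identification $\H^0\delta_0\colon \O_X \xrightarrow{\sim} \H^0\DD_X(\DB_X^n)$, which amounts to a Grauert--Riemenschneider-type statement available under normality and pre-$0$-rationality. With that in hand, an ascending induction on $p$ using the triangle above, feeding into the cohomological vanishing at each step, should close the argument.
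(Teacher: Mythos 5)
The paper does not prove this proposition; it is cited verbatim from \cite[Proposition 9.4]{PSV24}, so there is no internal argument to compare against. What I can do is assess whether your outline would work as a self-contained proof, and there I see two serious issues.

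First, a circularity concern. You open by invoking Remark \ref{rmk:terminology} to rephrase $D_m$ as ``$\delta_p\colon\DB_X^p\to\DD_X(\DB_X^{n-p})$ is an isomorphism for $p\le m$.'' But the paper's justification for that equivalence is itself ``see the proof of \cite[Proposition 9.4]{PSV24}'' --- the very proposition you are trying to prove. Similarly, the sentence ``pre-$m$-rationality together with normality implies pre-$m$-Du Bois'' is stated as if routine, but it is not: showing $\H^{>0}\DB_X^p=0$ from $\H^{>0}\DD_X(\DB_X^{n-p})=0$ plus normality is precisely the kind of duality statement whose proof is the content of \cite{PSV24}, \cite{SVV23}, \cite{DOR}. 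Using either input as a black box risks assuming what must be shown.

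Second, and more substantively, the crux of your argument is the sentence ``The Saito duality formula \ldots\ converts the pre-$m$-rationality vanishing of $\H^{>0}\DD_X(\DB_X^{n-p})$ into cohomological vanishing for $F_p$ in all degrees in the relevant range. This should force $F_p=0$.'' This is where all the work lives, and it is not carried out. Pre-$m$-rationality only asserts that $\DD_X(\DB_X^{n-p})$ is concentrated in cohomological degree $0$; it says nothing a priori about its $\H^0$, and $F_p=\gr^F_{-p}\DR(\K_X^\bullet)[p-n]$ has cohomologies potentially spread across degrees $[p-n+1,0]$ by Lemma \ref{lem: vanishing of grdr K}. Getting from ``target concentrated in degree $0$'' to ``the cone of $\gamma_p$ vanishes entirely'' requires a genuine argument --- likely an induction on $p$ combined with injectivity of $\H^0\gamma_p$ (torsion-freeness of $\H^0\DB_X^p$, as in Claim 2 of Lemma \ref{lem: vanishing of grdr K}), a compatible statement on the dual side, and care with the Tate twist in the self-duality $\dual\IC_X^H\simeq\IC_X^H(n)$, none of which appears in the outline. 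Your observation about the base case $p=0$ is reasonable, and the overall toolkit (the $\K_X^\bullet$ triangle, the factorization of $\delta_p$ through $\gamma_p$ and $\DD_X(\gamma_{n-p})$, Saito's duality formula) is the right one, but as written the proposal has a gap exactly where the proof is hardest.
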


In fact, the converse is also known under the pre-$m$-Du Bois assumption; see \cite[Sections 4, 5]{DOR}, and earlier \cite[Theorem 3.1]{CDM22} for local complete intersections.
\end{rmk}

\begin{rmk}[Defect]
Given Lemma \ref{lem:RHM-gamma}, for the largest $m$ such that $X$ satisfies condition  $D_m$, it may be tempting to think of $n-m$ as a  \emph{rational homology manifold defect} of $X$. Proposition \ref{prop: 0 to k then n-k-1 to n} will show however that such a defect is more accurately described by the quantity $\lceil \frac{n-2}{2} \rceil - m$.
\end{rmk}

For many topological and Hodge theoretic arguments, the key assumption is $D_m$, as we will see throughout the paper. The stronger rationality conditions are related to the finer analytic properties of the singularities.

\subsection{Hodge-Du Bois numbers and intersection Hodge numbers}\label{scn:HDB}
Let $X$ be a projective variety of dimension $n$. The ordinary cohomology $H^{p+q}(X,\Q)$ has a mixed Hodge structure with (increasing) Hodge filtration $F_\bullet$, and as for smooth varieties, the Hodge numbers $\underline h^{p,q}$ are defined by
$$
\underline h^{p,q}(X):=\dim_\C \gr^F_{-p}H^{p+q}(X,\C).
$$
As mentioned in Section \ref{scn:DB}, these are also computed by the hypercohomologies of the Du Bois complexes $\DB_X^p$, in the sense that 
$$
H^{p, q} (X): = \HH^q(X,\DB_X^p)=\gr^F_{-p}H^{p+q}(X,\C).
$$
Therefore, to emphasize the singular setting and the difference with the Hodge-Deligne numbers of the pure Hodge structures on the associated graded pieces with respect to the 
weight filtration, one sometimes refers to them as \emph{Hodge-Du Bois numbers}. As in the smooth setting, they organize themselves into a \emph{Hodge-Du Bois diamond}:
$$
\begin{array}{ccccccc}
& & & \h^{n,n} & & & \\
&&&&&& \\
& & \h^{n,n-1}& &\h^{n-1,n} & & \\
&  \Ddots & &\vdots & & \ddots &  \\
&&&&&& \\
\h^{n,0} & \h^{n-1, 1} & & \ldots & & \h^{1, n-1} &h^{0,n} \\
&&&&&& \\
& \ddots   & &\vdots & & \Ddots &  \\
& & \h^{1,0}& &\h^{0,1} & & \\
&&&&&& \\
& & & \h^{0,0} & & &
\end{array}
$$
To understand the symmetries of this diamond, one of the key tools is the comparison with intersection cohomology, by means of the 
intersection complex Hodge module $\IC^H_X$ and the graded pieces of its filtered de Rham complex, considered in the previous section. 
We denote the intersection cohomology of $X$ by $\mathit{IH}^\bullet(X)$. We have 
$$\mathit{IH}^{i + n} (X, \C) = \HH^i (X, \IC_X),$$
and by Saito's theory \cite{Saito88, Saito90}, this carries a pure Hodge structure. We set
$$
\mathit{IH}^{p,q}(X) : = \gr^F_{-p}\mathit{IH}^{p+q}(X,\C)=  \HH^q(X,I\DB_X^p)
$$
and  the \textit{intersection Hodge numbers} of $X$ are defined as 
$$I\underline h^{p,q}(X)  := \dim_\C \mathit{IH}^{p,q}(X).$$
Intersection cohomology also satisfies Poincar\'e duality, so altogether it has the full Hodge symmetry
$$
I\underline h^{p,q}(X)=I\underline h^{q,p}(X)=I\underline h^{n-p,n-q}(X)=I\underline h^{n-q,n-p}(X).
$$
Applying $\HH^q (X, \, \cdot \,)$ to the morphism $\gamma_p$ in the previous section induces the natural map 
$$H^{p, q} (X)  \to \mathit{IH}^{p,q}(X),$$
which in turn is a Hodge piece of the natural topological map 
$$H^{p+q} (X, \C) \to \mathit{IH}^{p+q} (X, \C).$$
The more agreement there is between singular cohomology and intersection cohomology via these mappings, the more symmetry we have in the Hodge-Du Bois diamond.

\begin{lem}\label{lem:*k-coincidence}
If $X$ satisfies condition $D_m$, we have  
$$ H^{p,q} (X) = \mathit{IH}^{p,q} (X) \,\,\,\,\,\, {\rm for~ all} \,\,\,\, 0 \le p \le m {\rm ~and~} 0\le q\le n.$$
\end{lem}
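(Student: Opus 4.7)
The plan is to observe that this is essentially immediate from the definitions, once all the identifications set up earlier in the paper are in place. We have by definition $H^{p,q}(X) = \HH^q(X, \DB_X^p)$ and $\mathit{IH}^{p,q}(X) = \HH^q(X, I\DB_X^p)$, and as noted in the discussion just before the lemma, the natural comparison map between them is obtained by applying the hypercohomology functor $\HH^q(X, \,\cdot\,)$ to the morphism $\gamma_p \colon \DB_X^p \to I\DB_X^p$ from (\ref{DB-to-IC}).

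Given this, the proof is a one-liner. First, I would recall condition $D_m$, which asserts precisely that the morphism $\gamma_p$ in the bounded derived category of coherent sheaves on $X$ is an isomorphism for every $0 \le p \le m$. Since $\HH^q(X, \,\cdot\,)$ is a functor on $\mathbf{D}^b_{\mathrm{coh}}(X)$, applying it to an isomorphism yields an isomorphism. Therefore
$$
H^{p,q}(X) = \HH^q(X, \DB_X^p) \xrightarrow{\;\HH^q(X,\gamma_p)\;} \HH^q(X, I\DB_X^p) = \mathit{IH}^{p,q}(X)
$$
is an isomorphism for all $0 \le p \le m$ and all $0 \le q \le n$, and this map agrees, by construction, with the Hodge-piece of the natural topological comparison $H^{p+q}(X, \C) \to \mathit{IH}^{p+q}(X, \C)$.

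There is no real obstacle here; all of the substantive content is packed into the identification of Hodge-Du Bois and intersection Hodge numbers with hypercohomologies of $\DB_X^p$ and $I\DB_X^p$ (due to Saito) and into the fact that the topological comparison is compatible with $\gamma_p$ at the level of graded de Rham complexes. Both of these are recorded earlier in Sections \ref{scn:DB}, \ref{scn:IC}, and \ref{scn:HDB}, so the lemma follows with no further work.
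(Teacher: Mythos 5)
Your proof is correct and matches the paper's intent: the paper states this lemma without a separate proof precisely because it is immediate from the definition of condition $D_m$ (that $\gamma_p\colon \DB_X^p \to I\DB_X^p$ is an isomorphism for $p \le m$) together with applying $\HH^q(X,\,\cdot\,)$, exactly as you describe.
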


This culminates in the case of rational homology manifolds (see Lemma \ref{lem:RHM-gamma}), when the two agree completely.

\medskip
It is also worth noting that simply due to the fact that they are the (total) Hodge numbers of the Hodge filtration on a mixed Hodge structure, the Hodge-Du Bois numbers satisfy some universal numerical inequalities, as noted by Friedman and Laza.

\begin{lem}[{\cite[Lemma 3.23]{FL22}}]\label{lem:FL}
Let $X$ be a complex projective variety of dimension $n$. For all $0 \le p \le i \le n$ we have
$$\sum_{a=0}^p \h^{i - a, a} (X) \le \sum_{a=0}^p \h^{a, i - a} (X).$$
Moreover, equality holds for all $0 \le p \le m$ $\iff \h^{i - p, p} (X) = \h^{p, i - p} (X)$ for all ~$0 \le p \le m$ $\iff \gr_F^p W_{i -1} H^i (X, \C) = 0$ for 
all $p \le m$.
\end{lem}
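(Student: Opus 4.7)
The plan is to work with $V := H^i(X,\C)$ endowed with its mixed Hodge structure, switching to the decreasing convention $F^\bullet$ so that $\h^{p,i-p}(X) = \dim \gr_F^p V$. Since $X$ is projective of dimension $n$, the weights of $V$ lie in $[0,i]$; equivalently the Hodge--Deligne numbers $h^{p',q'}(V) := \dim I^{p',q'}(V)$ vanish unless $0 \le p',q'$ and $p'+q' \le i$. In particular $F^0 V = V$, $F^{i+1}V = 0$, and one has the symmetry $h^{p',q'}(V) = h^{q',p'}(V)$ coming from the $\R$-structure on cohomology.

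First I would reinterpret the two cumulative sums as dimensions of single Hodge pieces. Telescoping gives
$$
\sum_{a=0}^p \h^{a,i-a}(X) = \dim V - \dim F^{p+1}V \quad\text{and}\quad \sum_{a=0}^p \h^{i-a,a}(X) = \dim F^{i-p}V,
$$
so the inequality is equivalent to $\dim F^{i-p}V + \dim F^{p+1}V \le \dim V$. Expanding each term in Hodge--Deligne numbers,
$$
\dim F^{i-p}V = \sum_{p' \ge i-p} h^{p',q'}(V), \qquad \dim F^{p+1}V = \sum_{p' \ge p+1} h^{p',q'}(V) = \sum_{q' \ge p+1} h^{p',q'}(V),
$$
where the final step uses the conjugation symmetry. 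Within the admissible region $\{(p',q') : 0 \le p',q',\ p'+q' \le i\}$, the two index sets $\{p' \ge i-p\}$ and $\{q' \ge p+1\}$ are disjoint (their intersection would force $p'+q' \ge i+1$), and their union has complement exactly the rectangle $R_p := \{(p',q') : 0 \le p' \le i-p-1,\ 0 \le q' \le p\}$. Hence
$$
\sum_{a=0}^p \h^{a,i-a}(X) - \sum_{a=0}^p \h^{i-a,a}(X) = \sum_{(p',q') \in R_p} h^{p',q'}(V) \ge 0,
$$
which proves the inequality and shows that equality at level $p$ holds iff $h^{p',q'}(V) = 0$ for every $(p',q') \in R_p$.

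The three-way equivalence then follows in short order. First, (1) $\Leftrightarrow$ (2) is immediate by subtracting cumulative sums for consecutive values of $p$. For (2) $\Leftrightarrow$ (3), strictness of $F$ with respect to $W$ yields $\dim \gr_F^p W_{i-1}V = \sum_{0 \le q \le i-p-1} h^{p,q}(V)$, so condition (3) for all $p \le m$ amounts to the vanishing $h^{p,q}(V) = 0$ whenever $p \le m$ and $p+q < i$; by conjugation symmetry, this is the same as the triangular vanishing $h^{p',q'}(V) = 0$ whenever $\min(p',q') \le m$ and $p'+q' < i$. On the other hand, a pair $(p',q')$ lies in some rectangle $R_p$ with $p \le m$ precisely when the range $q' \le p \le \min(m, i-p'-1)$ is non-empty, i.e.\ when $q' \le m$ and $p'+q' < i$; up to conjugation this is the same triangle. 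This combinatorial matching of the union $\bigcup_{p \le m} R_p$ with the triangular region is the one point that takes any care, and once it is in place the equivalence (2) $\Leftrightarrow$ (3) is closed.
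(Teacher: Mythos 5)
Your proof is correct. The paper does not give its own argument here but cites \cite[Lemma 3.23]{FL22}; your proof reconstructs that result via the standard Hodge-theoretic route: the Deligne bigrading of $V=H^i(X,\C)$, the weight bound (weights of $H^i$ of a compact variety lie in $[0,i]$, giving $F^0V=V$, $F^{i+1}V=0$), and the conjugation symmetry $h^{p',q'}(V)=h^{q',p'}(V)$ of the Hodge--Deligne numbers. The telescoping identities, the disjointness of the index regions $\{p'\ge i-p\}$ and $\{q'\ge p+1\}$ inside the admissible triangle, the identification of the complement with the rectangle $R_p$, the computation $\dim\gr_F^pW_{i-1}V=\sum_{q\le i-p-1}h^{p,q}(V)$ via the splitting, and the matching of $\bigcup_{p\le m}R_p$ with its conjugate triangle are all correct. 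This is essentially the same argument as in \cite{FL22}, so no further comparison is needed.
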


This contains the well-known fact that the Hodge structure on $H^i (X, \Q)$ is pure if and only if $\h^{i - p, p} (X) = \h^{p, i - p} (X)$ for all ~$0 \le p \le i$, which will be used repeatedly throughout the paper.

\subsection{Some generalities on mixed Hodge modules}
We begin by stating a general lemma which shows the existence of a morphism from an object in the derived category of mixed Hodge modules to its top weight graded complex. This will be used to control the weight of the key object considered in this paper, the ${\rm RHM}$-defect defined in the next section.

\begin{lem}
\label{lem: map to top weight graded}
Let $\M^\bullet\in D^b {\rm MHM}(X)$ be an object of weight $\le m$ in the derived category of mixed Hodge modules on $X$, in other words
$$
\gr^W_i\H^j(\M^\bullet)=0 \quad \mathrm{for} \quad i>j+m.
$$
Then there exists a morphism
$$
\M^\bullet\to \bigoplus_i\gr^W_{m+i}\H^{i}(\M^\bullet)[-i]
$$
in $D^b{\rm MHM}(X)$ such that the induced morphism of cohomologies
$$
\H^i(\M^\bullet)\to\gr^W_{m+i}\H^{i}(\M^\bullet)
$$
is the natural quotient morphism obtained from the fact that $W_{m+i}\H^i(\M^\bullet)=\H^i(\M^\bullet)$.
\end{lem}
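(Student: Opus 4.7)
The idea is to reduce to Saito's decomposition theorem by passing to the top-weight graded complex. Since $\M^\bullet$ has weight $\le m$, Saito's weight filtration on $D^b{\rm MHM}(X)$ yields a distinguished triangle
\[
W_{m-1}\M^\bullet \to \M^\bullet \xrightarrow{q} \gr^W_m\M^\bullet \xrightarrow{+1}
\]
with $W_{m-1}\M^\bullet$ of weight $\le m-1$ and $\gr^W_m\M^\bullet$ pure of weight $m$. Because this construction is compatible with the natural $t$-structure on $D^b{\rm MHM}(X)$, combined with the weight bounds on $W_{m-1}\M^\bullet$ and the purity of $\gr^W_m\M^\bullet$, the long exact cohomology sequence yields the identification
\[
\H^i\bigl(\gr^W_m\M^\bullet\bigr) \simeq \gr^W_{m+i}\H^i(\M^\bullet),
\]
with the induced cohomology map $\H^i(\M^\bullet) \to \H^i(\gr^W_m\M^\bullet)$ equal to the natural quotient.

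Next, since $\gr^W_m\M^\bullet$ is a pure complex in $D^b{\rm MHM}$, Saito's decomposition theorem for pure objects provides a (non-canonical) splitting
\[
\gr^W_m\M^\bullet \simeq \bigoplus_i \H^i\bigl(\gr^W_m\M^\bullet\bigr)[-i] = \bigoplus_i \gr^W_{m+i}\H^i(\M^\bullet)[-i].
\]
Composing with $q$ produces the sought morphism
\[
\M^\bullet \xrightarrow{q} \gr^W_m\M^\bullet \simeq \bigoplus_i \gr^W_{m+i}\H^i(\M^\bullet)[-i],
\]
whose effect on each $\H^i$ is by construction the natural quotient.

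The real content lies in two foundational inputs from Saito's theory: the existence of a weight truncation on the derived category with the correct cohomological compatibility (so that $\gr^W_m$ has the expected cohomology), and the decomposition theorem for pure complexes in $D^b{\rm MHM}$. The latter is the harder ingredient, ultimately depending on polarizability of pure Hodge modules and the semisimplicity of the category of pure Hodge modules of a given weight. Neither ingredient is elementary, but both are well-established in \cite{Saito88, Saito90}, and once accepted, the combination is formal. The resulting morphism is only canonical up to the choice of splitting in the second step, which is enough for the statement of the lemma.
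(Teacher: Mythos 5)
Your argument takes a genuinely different route from the paper's, and it is close to correct, but as written it has two issues worth flagging.

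First, you invoke a weight truncation triangle
$W_{m-1}\M^\bullet \to \M^\bullet \to \gr^W_m\M^\bullet \to +1$
``from Saito's weight filtration on $D^b{\rm MHM}(X)$.'' Saito's weight filtration is on individual objects of ${\rm MHM}(X)$, not on objects of the derived category; the existence of such a triangle at the derived level is the statement that $D^b{\rm MHM}(X)$ carries a weight structure in Bondarko's sense. That is true, but it is a substantive theorem that is not in \cite{Saito88, Saito90}, and it is roughly of the same order of difficulty as the lemma itself (indeed the lemma is what lets one \emph{build} the top step of such a truncation). By contrast, the paper's proof uses only the ${\rm Hom}$-vanishing for weights (\cite[4.5.3]{Saito90}) and canonical $t$-truncations, together with an induction on the set $S(\M^\bullet)=\{i:\gr^W_{m+i}\H^i(\M^\bullet)\neq 0\}$, so it stays inside the standard MHM toolkit.

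Second, even granting the weight triangle, the claimed identification $\H^i(\gr^W_m\M^\bullet)\simeq\gr^W_{m+i}\H^i(\M^\bullet)$ does not ``fall out'' of the long exact sequence. In the sequence
$\H^i(\M^\bullet)\to\H^i(\gr^W_m\M^\bullet)\xrightarrow{\ \delta_i\ }\H^{i+1}(W_{m-1}\M^\bullet)$,
the source of $\delta_i$ is pure of weight $m+i$ but the target only has weight $\le (i+1)+(m-1)=m+i$, so purity does not force $\delta_i=0$, and hence you do not get surjectivity of $\H^i(\M^\bullet)\to\H^i(\gr^W_m\M^\bullet)$ for free. What the LES and weight bounds do give is that the map $\H^i(\M^\bullet)\to\H^i(\gr^W_m\M^\bullet)$ has kernel exactly $W_{m+i-1}\H^i(\M^\bullet)$, so that its image is $\gr^W_{m+i}\H^i(\M^\bullet)$, sitting as a (possibly proper) summand inside the pure Hodge module $\H^i(\gr^W_m\M^\bullet)$. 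To repair your argument, use the semisimplicity of the category of pure Hodge modules of fixed weight to project $\H^i(\gr^W_m\M^\bullet)$ onto this image, and then compose $\M^\bullet\to\gr^W_m\M^\bullet\simeq\bigoplus_i\H^i(\gr^W_m\M^\bullet)[-i]\to\bigoplus_i\gr^W_{m+i}\H^i(\M^\bullet)[-i]$. With that fix, and with a proper reference for the weight structure, the argument goes through; but the paper's inductive construction via ${\rm Hom}$-vanishing is both more elementary and more self-contained.
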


\begin{proof}
From basic properties of mixed Hodge modules, we have
$$
\mathrm{Hom}_{D^b {\rm MHM}(X)}(\M^\bullet,\N^\bullet)=0
$$
when $\M^\bullet$ is of weight $\le w$ and $\N^\bullet$ is of weight $>w$ for some integer $w$ (see \cite[Lemma 14.3]{PS08} or \cite[4.5.3]{Saito90}). Using this, we proceed by induction on the number of elements in the set
$$
S(\M^\bullet):=\left\{i\;|\;\gr^W_{m+i}\H^{i}(\M^\bullet)\neq 0\right\}.
$$

When $|S(\M^\bullet)|=0$, the lemma is immediate. Suppose $|S(\M^\bullet)|$ is positive. Let $k$ be the largest integer in $S(\M^\bullet)$. Consider the following distinguished triangle
$$
\tau_{\le k}\M^\bullet\to \M^\bullet\to \mathcal{Q}^\bullet\xrightarrow{+1}
$$
where $\tau_{\le k}\M$ is a canonical truncation whose cohomologies are supported on degrees $\le k$. Therefore, there exists a long exact sequence
\begin{multline*}
\to \mathrm{Hom}(\M^\bullet, \gr^W_{m+k}\H^{k}(\M^\bullet)[-k])\to \mathrm{Hom}(\tau_{\le k}\M^\bullet, \gr^W_{m+k}\H^{k}(\M^\bullet)[-k]) \\
\to \mathrm{Hom}(\mathcal Q^\bullet[-1], \gr^W_{m+k}\H^{k}(\M^\bullet)[-k])\to 
\end{multline*}
Since $\mathcal Q^\bullet$ is of weight $\le m-1$, its shift $\mathcal Q^\bullet[-1]$ is of weight $\le m-2$. Hence,
$$
\mathrm{Hom}(\mathcal Q^\bullet[-1], \gr^W_{m+k}\H^{k}(\M^\bullet)[-k])=0
$$
and the natural surjection $\tau_{\le k}\M^\bullet\to \gr^W_{m+k}\H^{k}(\M^\bullet)[-k]$ lifts to a morphism
$$
\M^\bullet\to\gr^W_{m+k}\H^{k}(\M^\bullet)[-k].
$$

We extend this morphism to a distinguished triangle
$$
\mathcal R^\bullet\to\M^\bullet\to\gr^W_{m+k}\H^{k}(\M^\bullet)[-k]\xrightarrow{+1}.
$$
Then, $S(\M^\bullet)=S(\mathcal R^\bullet)\cup \left\{k\right\}$. By induction hypothesis, there exists a morphism
$$
\mathcal R^\bullet\to \bigoplus_i\gr^W_{m+i}\H^{i}(\mathcal R^\bullet)[-i]=\bigoplus_{i\neq k}\gr^W_{m+i}\H^{i}(\mathcal M^\bullet)[-i]
$$
and this morphism lifts to a morphism
$$
\M^\bullet\to \bigoplus_{i\neq k}\gr^W_{m+i}\H^{i}(\mathcal M^\bullet)[-i],
$$
because $\mathrm{Hom}(\gr^W_{m+k}\H^{k}(\M^\bullet)[-k-1],\bigoplus_i\gr^W_{m+i}\H^{i}(\mathcal R^\bullet)[-i])=0$. Indeed, $\gr^W_{m+k}\H^{k}(\M^\bullet)[-k-1]$ has weight $m-1$ and $\bigoplus_i\gr^W_{m+i}\H^{i}(\mathcal R^\bullet)[-i]$ has weight $m$. Combining two morphisms, we obtain a morphism
$$
\M^\bullet\to \bigoplus_{i}\gr^W_{m+i}\H^{i}(\mathcal M^\bullet)[-i]
$$
in $D^b{\rm MHM}(X)$, and this completes the proof.
\end{proof}

\medskip

In a different direction, given a polarizable $\Q$-Hodge structure $(V,F_\bullet)$ of weight $m$, we have a Hodge decomposition
$$
V_\C:=V\tensor_\Q \C=\bigoplus_{p+q=m}V^{p,q},\quad V^{p,q}\isom \gr^F_{-p}V_\C,
$$
such that $V^{p,q}=\overline{V^{q,p}}$. (Recall that we are using the increasing convention for the Hodge filtration.)
In particular, this implies that if $V^{p,q}=0$ for $p\le k$, then $V^{p,q}=0$ for $p\ge m-k$. In other words, if $\gr^F_{-p}V=0$ for $p\le k$, then this also holds for $p\ge m-k$. It is easy to see that this remains true for mixed $\Q$-Hodge structures of weight $\le m$, and this generalizes to mixed Hodge modules in the following sense.

\begin{prop}
\label{prop: range of nonzero grdr}
Let $\M\in {\rm MHM}(X)$ be a mixed Hodge module on $X$ of weight $\le m$. If 
$$
\gr^F_{-p}\DR(\M)=0\,\,\,\,\,\,{\rm for} \,\,\,\,p \le k,
$$
then this also holds for $p\ge m-k$.
\end{prop}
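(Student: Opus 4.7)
The plan is to reduce the statement to the case of a pure Hodge module via the weight filtration, and then to derive the pure case from Saito's self-duality. Suppose first that $\M$ is pure of some weight $w \le m$. Then Saito's polarization gives $\dual \M \simeq \M(w)$, where $\dual$ denotes the duality functor on mixed Hodge modules. Combining this with the Grothendieck--Saito compatibility
$$
\gr^F_p \DR(\dual \M) \simeq \dual_X\, \gr^F_{-p}\DR(\M)
$$
(up to a shift, where $\dual_X = R\Hom(\,\cdot\,, \w_X^\bullet)$ denotes Grothendieck duality on $X$) and the Tate-twist identity $\gr^F_p\DR(\M(w)) \simeq \gr^F_{p-w}\DR(\M)$ yields the symmetry
$$
\gr^F_{-p}\DR(\M) = 0 \ \Longleftrightarrow \ \gr^F_{-(w-p)}\DR(\M) = 0,
$$
since $\dual_X$ preserves vanishing of coherent complexes. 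Thus the hypothesis for $p \le k$ forces vanishing for $p \ge w-k$, which contains the desired range $p \ge m-k$ because $w \le m$.

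For a general mixed $\M$ of weight $\le m$, I would reduce to the pure pieces using the weight filtration $W_\bullet \M$, whose nonzero graded pieces $\gr^W_j \M$ are pure of weight $j \le m$. Since the de Rham functor is compatible with $W$, one has $\gr^W_j \DR(\M) \simeq \DR(\gr^W_j \M)$. Saito's strictness of the Hodge filtration with respect to the weight filtration, together with the degeneration of the weight spectral sequence on $\gr^F_{-p}\DR(\M)$, yields the equivalence
$$
\gr^F_{-p}\DR(\M) = 0 \ \Longleftrightarrow \ \gr^F_{-p}\DR(\gr^W_j \M) = 0 \text{ for every } j \le m.
$$
The vanishing hypothesis then transfers to each pure piece; applying the pure case to each $j \le m$ gives vanishing for $p \ge j-k$, which covers $[m-k, \infty)$ because $j \le m$, and combining over all $j$ recovers the claim on $\M$.

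The hard part will be justifying the reduction step, i.e.\ the transfer of vanishing from $\M$ to its weight graded pieces, which rests on the delicate interplay between the Hodge and weight filtrations in Saito's formalism. A more hands-on alternative would be to induct on the length of the weight filtration using the distinguished triangle obtained by applying $\gr^F_{-p}\DR$ to $0 \to W_{m-1}\M \to \M \to \gr^W_m \M \to 0$; however, vanishing does not propagate freely along such triangles, so this route still ultimately requires the same strictness or degeneration input.
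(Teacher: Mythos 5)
Your proposal takes a genuinely different route from the paper. The paper inducts on the dimension of $\Supp(\M)$, using Lemma~\ref{lem:hyperplane-de Rham} to cut down by very general hyperplane sections, reducing to a base case where the object is a Hodge structure at a point, and then runs a contradiction argument using the semisimplicity of pure Hodge modules and the compatibility of $F$ and $W$. Your plan instead reduces globally to the pure case along the weight filtration, and then invokes polarizability $\dual\M \simeq \M(w)$. Both approaches lean on pure duality at the crucial moment; yours front-loads the weight-graded reduction, the paper back-loads it to the pointwise situation. Your pure-case argument is correct as stated.

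There is, however, a real gap in the reduction step, which you yourself flag. The claimed equivalence
$$
\gr^F_{-p}\DR(\M) = 0 \iff \gr^F_{-p}\DR(\gr^W_j\M) = 0 \text{ for all } j
$$
for a \emph{fixed} $p$ does not follow from ``strictness plus degeneration of the weight spectral sequence on $\gr^F_{-p}\DR(\M)$'': the $W$-filtration on a bounded complex $\gr^F_{-p}\DR(\M)$ can perfectly well have acyclic total complex without acyclic graded pieces (e.g.\ a two-step filtration realizing the cone of an isomorphism), and there is no standard $E_1$-degeneration of the weight filtration at the level of the local complexes $\gr^F_{-p}\DR$. Strictness of $F$ along $W$ gives that $\gr^W_j\gr^F_p\DR(M) \simeq \gr^F_p\DR(\gr^W_j M)$ and that short exact sequences of MHM stay exact after $\gr^F_p\DR$, but that only gives ``$\Leftarrow$'' by an easy induction, not ``$\Rightarrow$.''

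The fix is to avoid the single-$p$ statement entirely, because both the hypothesis and the conclusion are about a \emph{half-line} of $p$, and half-line conditions behave much better. Indeed, via the duality compatibility $\gr^F_{p}\DR(\dual\M) \simeq \dual_X\,\gr^F_{-p}\DR(\M)$ and the fact that if $p_0$ is the smallest index with $F_{p_0}M\neq 0$ then $\gr^F_{p_0}\DR(\M)\simeq F_{p_0}M$ concentrated in a single degree (as used in the paper's own proof), one sees that ``$\gr^F_{-p}\DR(\M)=0$ for all $p\le k$'' is equivalent to ``$F_k$ of the underlying $\D$-module of $\dual\M$ vanishes,'' and ``$\gr^F_{-p}\DR(\M)=0$ for all $p\ge m-k$'' is equivalent to ``$F_{k-m}$ of the underlying $\D$-module of $\M$ vanishes.'' Vanishing of $F_\ell$ transfers to and from weight-graded pieces by elementary filtered-module considerations, using only $F_\ell W_j M = F_\ell M \cap W_j M$ from \cite[Lemme 5]{Saito88}, with no spectral sequence input. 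With this reformulation your reduction to the pure case goes through and the argument is complete; as written, it does not.
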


\begin{proof}
We prove this by induction on the dimension of $\Supp(\M)$.

The initial case, when $\M$ is a mixed $\Q$-Hodge structure of weight $\le m$ supported at a point, is explained in the previous paragraph. Suppose $\dim(\Supp(\M))\ge 1$.

Since the statement is local, we assume that $X$ is embeddable in a smooth quasi-projective variety $Y$. For a very general hyperplane section $\iota:L\hookrightarrow Y$, we denote $\M_L:=\H^{-1}(\iota^*\M)$. Then $\M_L$ is of weight $\le m-1$ (see \cite[4.5.2]{Saito90}). According to Lemma \ref{lem:hyperplane-de Rham} below, we have a distinguished triangle
$$
N^*_{L/Y}\otimes_{\O_L}\gr^F_{-p+1}\DR\left(\M_L\right)\to \O_L\tensor_{\O_Y}\gr^F_{-p}\DR\left(\M\right)\to \gr^F_{-p}\DR\left(\M_L\right)[1]\xrightarrow{+1}.
$$
Since $\gr^F_{-p}\DR(\M_L)=0$ for $p\ll0$, it is easy to see that $\gr^F_{-p}\DR(\M_L)=0$ for $p\le k$ by an inductive argument.

By induction hypothesis, we have $\gr^F_{-p}\DR(\M_L)=0$ for $p\ge m-1-k$. Therefore, the distinguished triangle implies that
$$
\O_L\tensor_{\O_Y}\gr^F_{-p}\DR\left(\M\right)=0
$$
for $p\ge m-k$. Hence, the cohomologies of $\gr^F_{-p}\DR\left(\M\right)$ are supported at closed points of $X$ for $p\ge m-k$.

Let $p_0$ be a largest integer such that $\gr^F_{-p_0}\DR\left(\M\right)$ is nonzero. In other words, $-p_0$ is the index of the first nonzero term in the Hodge filtration of $\M$. It suffices to show that $p_0\le m-k-1$. We argue by contradiction that $p_0\ge m-k$. If $(M,F_\bullet M, W_\bullet M)$ is the underlying filtered $\D_Y$-module of $\M$, then $F_{-p_0-1}M=0$ and
$$
\gr^F_{-p_0}\DR\left(\M\right)\isom F_{-p_0}M.
$$
See \cite[Section 3]{Park23} for details. By the compatibility and strictness of Hodge filtration $F_\bullet$ and weight filtration $W_\bullet$, \cite[Lemme 5]{Saito88}, there exists an integer $w\le m$ such that
$$
F_{-p_0}\gr^W_{w}M\neq 0.
$$

Since the category of pure Hodge modules is semisimple, \cite[Lemme 5]{Saito88}, $\gr^W_w \M$ decomposes into irreducible Hodge modules
$$
\gr^W_w \M\isom \bigoplus_{i}\M_i.
$$
Hence, there exists one component $\M_0$ with the underlying filtered $D_Y$-module $(M_0,F_\bullet M_0)$ such that $F_{-p_0}M_0\neq0$, and $F_{-p_0}M_0$ is supported at a closed point $x$ of $X$. From the basic property of irreducible Hodge modules, $\M_0$ is a Hodge module corresponding to the minimal extension of an irreducible polarizable variation of $\Q$-Hodge structures supported on a subvariety of $X$. Therefore, $\M_0$ is a Hodge module corresponding to a polarizable $\Q$-Hodge structure $(V,F_\bullet)$ of weight $w$, supported at $x\in X$, with $F_{-p_0}V_\C\neq 0$. This implies that $\gr^F_{-w+p_0}V_\C\neq 0$.

Passing to duals, $\dual \M_0$ is a subquotient of $\dual \M$, corresponding to a $\Q$-Hodge structure $\dual V$ supported at a point $x\in X$, with $\gr^F_{w-p_0}(\dual V)_\C\neq 0$. Again, by the compatibility and strictness of Hodge filtration and weight filtration, the index of the first nonzero term in the Hodge filtration of $\dual\M$ is at most $w-p_0$. On the other hand, we have
$$
\gr^F_{p'}\DR(\dual \M)=0 \quad \mathrm{for}\quad p'\le k
$$
from the duality formula (see e.g. \cite[Lemma 3.2]{Park23}). Therefore, we have $k< w-p_0$, and thus $p_0\le m-k-1$. This completes the proof.
\end{proof}

Finally, for repeated use, we recall a useful lemma about the restriction of the graded quotients of the de Rham complex to a very general hypersurface; see \cite[Proposition 4.17]{KS21} and \cite[13.3]{Schnell16}.

\begin{lem}\label{lem:hyperplane-de Rham}
Let $Y$ be a quasi-projective variety, and $\iota:L\hookrightarrow Y$ a very general hyperplane section. For a mixed Hodge module $\M\in {\rm MHM}(Y)$, we have a short exact sequence of complexes
$$
0\to N^*_{L/Y}\otimes_{\O_L}\gr^F_{p+1}\DR\left(\M_L\right)\to \O_L\tensor_{\O_Y}\gr^F_{p}\DR\left(\M\right)\to \gr^F_{p}\DR\left(\M_L\right)[1]\to 0
$$
where $\M_L=\H^{-1}(\iota^*\M)$.
\end{lem}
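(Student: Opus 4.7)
The statement is local on $Y$, so I work in an open neighborhood where $L$ is cut out by a single regular function $s$, and $ds$ gives a trivialization of the conormal bundle $N^*_{L/Y}$. The first point is that for a very general hyperplane section the embedding $\iota: L \hookrightarrow Y$ is non-characteristic for the underlying filtered $\D$-module $(M, F_\bullet M)$ of $\M$, in the sense that $T^*_L Y$ meets the characteristic variety $\mathrm{Ch}(\M) \subset T^*Y$ only along the zero section; this follows from the fact that $\mathrm{Ch}(\M)$ is a conical subvariety of bounded dimension and the linear system of hyperplanes avoids its projectivization generically.

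The non-characteristic condition gives the key input that multiplication by $s$ is injective on each graded piece $\gr^F_k M$, so that for every $k$ we obtain a short exact sequence of $\O_Y$-modules
\[
0 \longrightarrow \gr^F_k M \xrightarrow{\;s\;} \gr^F_k M \longrightarrow \O_L \otimes_{\O_Y} \gr^F_k M \longrightarrow 0,
\]
and, using Saito's formula for the filtration on the non-characteristic pullback (which involves a shift in the filtration index to account for the perverse normalization $\M_L = \H^{-1}(\iota^*\M)$), the cokernel identifies canonically with $\gr^F_k \M_L$ up to the appropriate shift in $k$. Independently, the conormal exact sequence $0 \to N^*_{L/Y} \to \Omega^1_Y\big|_L \to \Omega^1_L \to 0$ yields, by taking wedge powers, a short exact sequence
\[
0 \longrightarrow N^*_{L/Y} \otimes_{\O_L} \Omega^{k-1}_L \longrightarrow \Omega^k_Y\big|_L \longrightarrow \Omega^k_L \longrightarrow 0
\]
in each degree $k$.

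The main construction is to tensor the conormal exact sequence in degree $k$ with the appropriate graded piece and assemble, degree by degree, a short exact sequence of $\O_L$-modules whose middle term is $\Omega^k_Y\big|_L \otimes \gr^F_{p+k} M$ (the degree $-k$ part of $\O_L \otimes_{\O_Y} \gr^F_p \DR(\M)$), whose right term is $\Omega^{k-1}_L \otimes \gr^F_{p+k-1}\M_L$ (the degree $-k$ part of $\gr^F_p \DR(\M_L)[1]$), and whose left term is $N^*_{L/Y} \otimes \Omega^k_L \otimes \gr^F_{p+k+1}\M_L$ (the degree $-k$ part of $N^*_{L/Y} \otimes \gr^F_{p+1} \DR(\M_L)$). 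The shift in the filtration index, and in particular the appearance of $\gr^F_{p+1}$ rather than $\gr^F_p$ in the first term, is exactly governed by the formula for the Hodge filtration under non-characteristic pullback combined with the index shift caused by the conormal factor.

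The main obstacle, and really the only nontrivial verification, is to check that the maps in each degree are compatible with the de Rham differentials, so that one actually obtains a short exact sequence of \emph{complexes} rather than merely of objects in each degree. This is a direct but careful computation: the compatibility on the right comes from the fact that the quotient differential on $\Omega^\bullet_L \otimes \gr^F M_L$ is induced from the differential on $\Omega^\bullet_Y \otimes \gr^F M$ via the canonical projection $\Omega^k_Y|_L \to \Omega^k_L$, while the compatibility on the left comes from the identification of $N^*_{L/Y}$ with $s \cdot \O_Y/s^2 \cdot \O_Y$ and the Leibniz rule applied to $s$, which matches the index shift from Step 2. Once the compatibility in every degree is verified, exactness in each degree has already been established and the proof is complete.
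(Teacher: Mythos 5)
The paper does not actually prove this lemma; it is cited from \cite[Proposition 4.17]{KS21} and \cite[13.3]{Schnell16}. Your overall strategy — use non-characteristicity of a very general hyperplane to get strictness of the filtration under restriction, then tensor the conormal sequence $0\to N^*_{L/Y}\to\Omega^1_Y|_L\to\Omega^1_L\to 0$ by graded pieces — is indeed the standard route taken in those references, and the last paragraph correctly flags that compatibility with the de Rham differential is the one nontrivial verification (which you do not carry out).

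However, the degree-by-degree bookkeeping in your middle paragraph is wrong, and the three terms you list do not form a short exact sequence. You place $\Omega^j\otimes\gr^F_{p+j}M$ in degree $-j$; the de Rham complex convention compatible with the $[1]$ shift in the lemma puts it in degree $j-\dim$. Since $\dim Y = \dim L + 1$, that shift is precisely what makes the three complexes line up. With the correct convention, the degree-$i$ term of the middle complex is $\Omega^k_Y|_L\otimes\gr^F_{p+k}M$ with $k=i+n$, the degree-$i$ term of $\gr^F_p\DR(\M_L)[1]$ is $\Omega^k_L\otimes\gr^F_{p+k}M_L$, and the degree-$i$ term of $N^*_{L/Y}\otimes\gr^F_{p+1}\DR(\M_L)$ is $N^*_{L/Y}\otimes\Omega^{k-1}_L\otimes\gr^F_{p+k}M_L$. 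All three carry the \emph{same} coefficient $\gr^F_{p+k}M_L$ (using $\O_L\otimes_{\O_Y}\gr^F_{p+k}M\simeq\gr^F_{p+k}M_L$, with no index shift under non-characteristic restriction), and the short exact sequence in each degree is visibly the conormal sequence in exterior degree $k$ tensored by $\gr^F_{p+k}M_L$. Your three terms instead carry coefficients $\gr^F_{p+k+1}M_L$, $\gr^F_{p+k}M$, $\gr^F_{p+k-1}M_L$ with mismatched form degrees; there is no natural surjection $\Omega^k_Y|_L\otimes\gr^F_{p+k}M\to\Omega^{k-1}_L\otimes\gr^F_{p+k-1}M_L$, nor a natural injection from $N^*_{L/Y}\otimes\Omega^k_L\otimes\gr^F_{p+k+1}M_L$ into the middle term. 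Relatedly, the appearance of $\gr^F_{p+1}$ in the lemma is not caused by any shift in the pullback formula (that formula is $F_pM_L=\O_L\otimes F_pM$, shift-free); it is purely the combinatorial consequence of $\Omega^{k-1}_L$ having form degree $k-1$ while the coefficient stays $\gr^F_{p+k}M_L$, forcing $(p+1)+(k-1)=p+k$. Once the degree convention is fixed, the argument assembles correctly and the remaining work is the differential compatibility check you deferred.
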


\section{Hodge-Du Bois symmetry I}

\subsection{The RHM defect object}\label{scn:RHM}
We introduce now the crucial ingredient we use in order to compare singular cohomology and intersection cohomology. Throughout this section $X$ is  
an equidimensional variety of dimension $n$.

\begin{defn}
The \emph{RHM-defect object of $X$} is the object $\K_X^\bullet\in D^b{\rm MHM}(X)$ sitting in the distinguished triangle:
\begin{equation}
\label{eqn: Q to IC triangle}
\K_X^\bullet \longrightarrow \Q_X^H[n]\overset{\gamma_X}{\longrightarrow} \IC_X^H\xrightarrow{+1}.    
\end{equation}
\end{defn}

Thus $X$ is a rational homology manifold if and only if $\K_X^\bullet = 0$. Moreover, by ($\ref{eqn:lcdef-MHM}$),
$$\lcdef (X) = 0 \iff \H^j (\K^\bullet_X)=0 \,\,\,\,{\rm for} \,\,\,\,j \neq 0.$$

\begin{rmk}
In this latter case, or equivalently when $\Q_X[n]$ is perverse,  the $\Q$-perverse sheaf underlying $\K_X^\bullet$ has already appeared as the ``multiple-point complex" in \cite{HM16}. It was later studied as the ``comparison complex", for instance in \cite{Hepler19} and \cite{Massey22}, primarily to investigate the topology of hypersurfaces.
\end{rmk}

Some fundamental properties of $\K_X^\bullet$ are summarized in the following:

\begin{prop}
\label{prop: weight of K}
For all integers $i$, we have
$$
\dim\left(\Supp\left(\H^i\K^\bullet_X\right)\right)\le n+i-1.
$$
Additionally, $\K^\bullet_X$ is of weight $\le n-1$, in other words
$$
\gr^W_i\H^j(\K^\bullet_X)=0 \quad \mathrm{for} \quad i>j+n-1.
$$
\end{prop}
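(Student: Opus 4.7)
The plan is to extract all information from the long exact sequence in MHM cohomology arising from the distinguished triangle (\ref{eqn: Q to IC triangle}), combined with the factorization recalled in (\ref{eqn:gamma}), namely that $\gamma_X$ is the composition $\Q_X^H[n]\to \H^0(\Q_X^H[n])\twoheadrightarrow \gr^W_n\H^0(\Q_X^H[n])\simeq \IC_X^H$. Since $\IC_X^H$ lies in degree $0$ of the MHM $t$-structure and $\H^j(\Q_X^H[n])=0$ for $j>0$, the long exact sequence collapses to
\[
\H^i(\K_X^\bullet)=\H^i(\Q_X^H[n]) \text{ for } i\le -1, \qquad \H^0(\K_X^\bullet)=W_{n-1}\H^0(\Q_X^H[n]), \qquad \H^i(\K_X^\bullet)=0 \text{ for } i\ge 1.
\]

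For the support bound, the cases $i\ge 1$ and $i=0$ are now immediate: over the smooth locus $\Q_X^H[n]\simeq \IC_X^H$, so $\gamma_X$ is already an isomorphism there, which forces $W_{n-1}\H^0(\Q_X^H[n])$ to be supported on $X_{\mathrm{sing}}$, of dimension $\le n-1$. For $i=-j\le -1$ I would fix a Whitney stratification of $X$ along which each $\H^{-j}(\Q_X^H[n])$ is locally constant, and run a transverse slice argument at a generic point of a stratum $S\subset X_{\mathrm{sing}}$ of dimension $d$: the germ of $X$ is analytically isomorphic to $S\times X'$ with $X'$ a transverse slice of dimension $n-d$, so locally $\Q_X^H[n]\simeq \Q_S^H[d]\boxtimes \Q_{X'}^H[n-d]$, and Künneth yields
\[
\H^{-j}(\Q_X^H[n])\simeq \Q_S^H[d]\boxtimes \H^{-j}(\Q_{X'}^H[n-d]).
\]
Nonvanishing forces $j\le \lcdef(X')$, while the Hartshorne--Lichtenbaum--Lyubeznik connectedness theorem bounds $\lcdef(X')\le (n-d)-1$ for the (locally connected) transverse slice. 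Combining these two inequalities gives $d\le n-j-1=n+i-1$, as desired.

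For the weight bound, the key observation is that $\Q_X^H[n]$ has weight $\le n$ by standard Saito theory, i.e.\ $\gr^W_i\H^j(\Q_X^H[n])=0$ whenever $i>j+n$. In particular, $\gr^W_n\H^j(\Q_X^H[n])=0$ for all $j\ne 0$, so the pure top-weight quotient complex $\gr^W_n\Q_X^H[n]$, which decomposes semisimply as a direct sum of its cohomologies placed in the corresponding degrees, is concentrated in degree $0$ and identifies with $\gr^W_n\H^0(\Q_X^H[n])\simeq \IC_X^H$. The canonical quotient $\Q_X^H[n]\to \gr^W_n\Q_X^H[n]\simeq \IC_X^H$ is precisely $\gamma_X$, so $\K_X^\bullet$ coincides with the weight truncation $W_{\le n-1}\Q_X^H[n]$, which has weight $\le n-1$ by construction.

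The main obstacle is the support estimate in the case $i\le -1$, which hinges on the transverse-slice Künneth reduction together with the nontrivial Hartshorne--Lichtenbaum--Lyubeznik bound $\lcdef(X')\le \dim X'-1$; this is what sharpens the naive containment $\Supp\H^i(\K_X^\bullet)\subseteq X_{\mathrm{sing}}$ to the claimed codimension estimate. Once the support bound and the identification $\K_X^\bullet=W_{\le n-1}\Q_X^H[n]$ are in place, the weight bound follows formally.
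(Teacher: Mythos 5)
Your setup via the long exact sequence is correct: you correctly extract $\H^i(\K^\bullet_X)\simeq\H^i(\Q^H_X[n])$ for $i\le-1$, $\H^0(\K^\bullet_X)=W_{n-1}\H^0(\Q^H_X[n])$, and vanishing for $i\ge 1$. Your support argument is a valid variant of the paper's: the paper runs an induction on dimension via very general hyperplane sections (Lemma~\ref{lem: cutting hyperplane of K}), whereas you reduce directly to transverse slices at generic points of Whitney strata. Both ultimately feed in the local Lichtenbaum bound $\lcdef(X')\le\dim X'-1$. Your route is fine for what is needed here (the support of a mixed Hodge module is the support of the underlying perverse sheaf, so the topological local triviality along strata and the K\"unneth formula for constructible complexes suffice; ``analytically isomorphic'' should read ``topologically trivial''), but note that the hyperplane-section version is reused in the paper for several later propositions, so it is not redundant machinery.

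Your weight argument, however, has a genuine gap, coming from a conflation of ``weight $n$'' and ``weight $n+j$ in degree $j$.'' You correctly observe that $\Q^H_X[n]$ has weight $\le n$, i.e.\ $\gr^W_i\H^j(\Q^H_X[n])=0$ for $i>j+n$, and deduce $\gr^W_n\H^j(\Q^H_X[n])=0$ for $j\ne 0$. But this is not the vanishing you need. To show $\K^\bullet_X$ has weight $\le n-1$, the boundary case is precisely $\gr^W_{n+j}\H^j(\K^\bullet_X)=0$ for $j\le -1$, which by your LES analysis is $\gr^W_{n+j}\H^j(\Q^H_X[n])=0$ for $j<0$. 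This is a strict strengthening of the weight-$\le n$ bound (which only controls $\gr^W_i$ for $i>n+j$) and does not follow from it. In particular, even granting a weight-truncation triangle $w_{\le n-1}\Q^H_X[n]\to\Q^H_X[n]\to w_{\ge n}\Q^H_X[n]\to$, the pure quotient $w_{\ge n}\Q^H_X[n]$ has $j$-th cohomology $\gr^W_{n+j}\H^j(\Q^H_X[n])$, not $\gr^W_n\H^j(\Q^H_X[n])$, so your observation ``$\gr^W_n\H^j=0$ for $j\ne 0$'' does not show it is concentrated in degree $0$, and you cannot conclude $\K^\bullet_X=w_{\le n-1}\Q^H_X[n]$. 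The paper addresses exactly this point: it first proves the support bound, then constructs a morphism $\Q^H_X[n]\to\bigoplus_i\gr^W_{n+i}\H^i(\Q^H_X[n])[-i]$ via Lemma~\ref{lem: map to top weight graded}, and finally kills each component with $i<0$ by adjunction along the closed embedding of the $\le(n+i-1)$-dimensional support and a $t$-structure Hom-vanishing. Your proof is missing this step, and without it the top graded piece $\gr^W_{n+j}\H^j(\Q^H_X[n])$ in negative degrees is not ruled out.
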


Here, the support of a mixed Hodge module is the support of the underlying D-module or perverse sheaf.

\begin{proof}
 When $n=0$, that is $X$ is a union of closed points, then clearly $\K^\bullet_X=0$. We assume $n\ge 1$.

Since the statements are local, we may assume that $X$ is embeddable in a smooth variety.
From the local Lichtenbaum theorem (\cite[Theorem 3.1]{Hartshorne68} or \cite[Corollary 2.10]{Ogus73}), we have that $\mathrm{lcd}(X,Y)\le \dim Y-1$ for any closed embedding $X\hookrightarrow Y$ into a smooth variety (see also \cite[Corollary 11.9]{MuP22a}). By the Riemann-Hilbert correspondence (see e.g. \cite[Theorem 1]{RSW23}) this translates into
$$
\H^{-n}\left(\Q^H_X[n]\right)=0.
$$
This implies that $\H^{-n}\K^\bullet_X=0$. By Lemma \ref{lem: cutting hyperplane of K} below, on the behavior of $\K^\bullet_X$ with respect to taking very general hyperplane sections, it is then straightforward to conclude by induction that
$$
\dim\left(\Supp\left(\H^i\K^\bullet_X\right)\right)\le n+i-1.
$$

It remains to prove that $\K^\bullet_X$ is of weight $\le n-1$. From \cite[4.5.2]{Saito90} we know that the object $\Q^H_X[n]\in D^b{\rm MHM}(X)$ is of weight $\le n$, 
hence so is $\K^\bullet_X$. By Lemma \ref{lem: map to top weight graded}, we have a morphism
$$
\Q^H_X[n]\to \bigoplus_i\gr^W_{n+i}\H^{i}(\Q^H_X[n])[-i].
$$
If $i<0$, then $\gr^W_{n+i}\H^{i}(\Q^H_X[n])=\gr^W_{n+i}\H^{i}(\K^\bullet_X)$ is a pure Hodge module supported on a subvariety $Z$ of dimension $\le n+i-1$ (as shown above). If $\iota:Z\hookrightarrow X$ denotes the closed embedding, by adjunction we have
$$
\mathrm{Hom}_{D^b{\rm MHM}(X)}(\Q^H_X[n], \gr^W_{n+i}\H^{i}(\Q^H_X[n])[-i])=\mathrm{Hom}_{D^b{\rm MHM}(X)}(\iota_*\Q^H_Z[n], \gr^W_{n+i}\H^{i}(\Q^H_X[n])[-i]).
$$
Since $Z$ is of dimension $\le n+i-1$, we obtain $\iota_*\Q^H_Z[n]\in D^{\le i-1}{\rm MHM}(X)$, meaning its cohomologies are supported on degrees $\le i-1$. On the other hand, $\gr^W_{n+i}\H^{i}(\Q^H_X[n])[-i]$ is an object whose only nonzero cohomology is in degree $i$. Therefore,
$$
\mathrm{Hom}_{D^b{\rm MHM}(X)}(\Q^H_X[n], \gr^W_{n+i}\H^{i}(\Q^H_X[n])[-i])=0
$$
when $i<0$. This implies that
$$
\gr^W_{n+i}\H^{i}(\K^\bullet_X)=0
$$
for all $i$, and thus, $\K^\bullet_X$ is of weight $\le n-1$.
\end{proof}

\begin{rmk}[{\bf Weber's theorem}]\label{rmk:Weber}
When $X$ is projective, the long exact sequence of hypercohomology associated to \eqref{eqn: Q to IC triangle} induces
$$
\to\HH^i(X,\K^\bullet_X[-n])\to H^i(X,\Q)\to \mathit{IH}^i(X,\Q)\to.
$$
Using the fact that $\K^\bullet_X$ is of weight $\le n-1$, we recover \cite[Theorem 1.8]{Weber04}, which states that
$$
\mathrm{kernel}\big( H^i(X,\Q)\to \mathit{IH}^i(X,\Q) \big)=W_{i-1}H^i(X,\Q).
$$
See also \cite[Exercise 11.3.9]{Maxim19}.
\end{rmk}

\smallskip

We next describe the behavior of $\K^\bullet_X$ upon restriction to a very general hyperplane section, analogous to the behavior of the intersection complex described in 
\cite[Proposition 4.17]{KS21} (cf. also \cite[Section 6.2]{Park23}).

\begin{lem}
\label{lem: cutting hyperplane of K}
Assume that $X$ is embedded in a smooth quasi-projective variety $Y$. For a very general hyperplane $\iota:L\hookrightarrow Y$, we have an isomorphism
$$
\iota^*\K^\bullet_X\isom \K^\bullet_{X\cap L}[1],
$$
and there exists a distinguished triangle
$$
N^*_{L/Y}\otimes_{\O_L}\gr^F_{p+1}\DR\left(\K^\bullet_{X\cap L}\right)\to \O_L\tensor_{\O_Y}\gr^F_{p}\DR\left(\K^\bullet_X\right)\to \gr^F_{p}\DR\left(\K^\bullet_{X\cap L}\right)[1]\xrightarrow{+1}
$$
in $D^b_{coh}(L, \O_L)$, where $N^*_{L/Y}$ is the conormal bundle of $L\subset Y$.
\end{lem}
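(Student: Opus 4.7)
The plan is to apply the pullback $\iota^{*}$ to the defining triangle
$$
\K^{\bullet}_{X} \longrightarrow \Q_{X}^{H}[n]\xrightarrow{\gamma_{X}} \IC^{H}_{X}\xrightarrow{+1}
$$
and identify each term on $L$. Since $L$ is very general, it is transverse to a Whitney stratification of $Y$ adapted to $X$, and hence non-characteristic for both $\Q_{X}^{H}[n]$ and $\IC^{H}_{X}$. This yields $\iota^{*}\Q^{H}_{X}[n]\simeq \Q^{H}_{X\cap L}[n-1][1]$ (from the sheaf-theoretic identity $\iota^{-1}\Q_{X}[n]=\Q_{X\cap L}[n]$), and, by \cite[Proposition 4.17]{KS21}, $\iota^{*}\IC^{H}_{X}\simeq \IC^{H}_{X\cap L}[1]$. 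I then need to verify that $\iota^{*}\gamma_{X}$ agrees, up to the shift by $[1]$, with $\gamma_{X\cap L}$. Because $\gamma_{X}$ is the composition of canonical quotient morphisms $\Q_{X}^{H}[n]\twoheadrightarrow \H^{0}\Q_{X}^{H}[n]\twoheadrightarrow\gr^{W}_{n}\H^{0}\Q_{X}^{H}[n]=\IC^{H}_{X}$, and non-characteristic restriction is strictly compatible with both the perverse $t$-structure and the weight filtration, this compatibility follows by functoriality. Completing the pulled-back triangle in $D^{b}\mathrm{MHM}(L)$ then yields the first isomorphism $\iota^{*}\K^{\bullet}_{X}\simeq \K^{\bullet}_{X\cap L}[1]$.

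For the second assertion, my plan is to extend Lemma \ref{lem:hyperplane-de Rham} to objects of $D^{b}\mathrm{MHM}(Y)$: applying $\gr^{F}_{p}\DR$ termwise and gluing the resulting short exact sequences via the triangulated structure, one obtains the same distinguished triangle for any $\M^{\bullet}\in D^{b}\mathrm{MHM}(Y)$ that admits a non-characteristic restriction to $L$, with $\M^{\bullet}_{L}$ now reinterpreted as $\iota^{*}\M^{\bullet}[-1]$. By the first part, this complex equals $\K^{\bullet}_{X\cap L}$ when $\M^{\bullet}=\K^{\bullet}_{X}$, and the asserted triangle drops out upon substitution.

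The \emph{main obstacle} is the careful compatibility check for $\gamma_{X}$ under non-characteristic restriction, together with the derived-category extension of the hyperplane-cut lemma. Both should follow from the strict compatibility of the weight and Hodge filtrations with non-characteristic pullback of mixed Hodge modules, but must be spelled out in detail to avoid sign or shift errors, especially since $\K^{\bullet}_{X}$ is a genuine complex rather than a single Hodge module, so that the interaction of the cohomological truncations with the triangulated statement of Lemma \ref{lem:hyperplane-de Rham} requires a short bookkeeping argument.
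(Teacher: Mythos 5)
Your proposal is correct in outline but differs from the paper in the crucial step of the first isomorphism, and there it also hand-waves a point that actually needs Saito's non-characteristic restriction theorems rather than mere ``functoriality.''

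For the identification $\iota^*\K_X^\bullet \simeq \K^\bullet_{X\cap L}[1]$, you propose to commute the perverse truncation $\H^0$ and the weight projection $\gr^W_n$ past $\iota^*$, so that $\iota^*\gamma_X$ literally becomes $\gamma_{X\cap L}[1]$. This does not follow ``by functoriality'': $\iota^*$ is not perverse $t$-exact in general; what one has (for non-characteristic $\iota$) is that $\iota^*[-1]$ is $t$-exact and shifts weights by $-1$, and to run your argument you would need to invoke these statements explicitly and track the resulting shifts carefully. The paper sidesteps all of this. After pulling back the defining triangle and using $\iota^*\IC^H_X \simeq \IC^H_{X\cap L}[1]$, it simply observes that the resulting morphism $\Q^H_{X\cap L}[n] \to \IC^H_{X\cap L}[1]$ restricts to an isomorphism over the smooth locus of $X\cap L$, and that this already forces it to be $\gamma_{X\cap L}$ shifted by $[1]$: any morphism $\Q^H_Z[\dim Z] \to \IC^H_Z$ factors through $\H^0$ (since the target is concentrated in degree $0$), then through $\gr^W_{\dim Z}\H^0\Q^H_Z[\dim Z] \simeq \IC^H_Z$ (by weight reasons), and the resulting scalar is fixed by the restriction to the smooth locus. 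That argument is shorter, self-contained, and avoids the $t$-exactness bookkeeping you flag as your main obstacle.

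For the distinguished triangle involving $\gr^F_p\DR$, your plan is the same as the paper's: represent $\K^\bullet_X$ by a complex $\M^\bullet$ of mixed Hodge modules on $Y$, apply Lemma \ref{lem:hyperplane-de Rham} termwise (valid since $L$ is very general), and identify $\M^\bullet_L$ with a representative of $\K^\bullet_{X\cap L}$ using the first isomorphism. No issues there.
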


\begin{proof}
Since $L$ is chosen to be very general, the embedding $\iota: L\hookrightarrow Y$ is non-characteristic for $\IC^H_X$. This implies that
$$
\iota^*\IC_X^H\isom \IC^H_{X\cap L}[1]
$$
by Saito's theory \cite{Saito90} (or see \cite[Theorem 8.3]{Schnell16}). Therefore, the pullback of the distinguished triangle \eqref{eqn: Q to IC triangle} yields another distinguished triangle
$$
\iota^*\K_X^\bullet\to\Q_{X\cap L}^H[n]\to\IC_{X\cap L}^H[1]\xrightarrow{+1}.
$$
The morphism $\Q_{X\cap L}^H[n]\to\IC_{X\cap L}^H[1]$ is an isomorphism on the smooth locus of $X\cap L$, which implies that this morphism is the natural morphism 
$\Q_{X\cap L}^H[n-1]\to\IC_{X\cap L}^H$ shifted by $[1]$. Therefore, we have an isomorphism
$$
\iota^*\K^\bullet_X\isom \K^\bullet_{X\cap L}[1].
$$

Suppose $\K_X^\bullet$ is represented by a complex of mixed Hodge modules $\M^\bullet$ on $Y$. Since $\iota:L\hookrightarrow Y$ is chosen to be very general, Lemma \ref{lem:hyperplane-de Rham} applies to each term of the complex $\M^\bullet$. This induces the short exact sequence
$$
0\to N^*_{L/Y}\otimes_{\O_L}\gr^F_{p+1}\DR\left(\M_L^\bullet\right)\to \O_L\tensor_{\O_Y}\gr^F_{p}\DR\left(\M^\bullet\right)\to \gr^F_{p}\DR\left(\M_L^\bullet\right)[1]\to 0.
$$

From the isomorphism $\iota^*\K^\bullet_X\isom \K^\bullet_{X\cap L}[1]$, the object $\K^\bullet_{X\cap L}$ is represented by the complex $\M_L^\bullet$. Therefore, we obtain the distinguished triangle in the statement.
\end{proof}

The next natural step is to apply  the graded de Rham functor $\gr^F_{-k}\DR(\, \cdot \,)$ to \eqref{eqn: Q to IC triangle}. This leads to the following distinguished triangle
\begin{equation}
\label{eqn: grdr Q to IC}
\gr^F_{-k}\DR(\K_X^\bullet)\to \DB^k_X[n-k]\to I\DB^k_X[n-k]\xrightarrow{+1}
\end{equation}
in ${\bf D}^b_{\rm coh}(X)$. In other words, 
$$
\gr^F_{-k}\DR(\K_X^\bullet) [k - n - 1] = {\rm Cone} \big(\gamma_k \colon \DB^k_X \to I\DB^k_X\big).
$$

We obviously have the equivalence 
$$D_m \iff \gr^F_{-p}\DR(\K_X^\bullet) = 0 \,\,\,\,\,\,{\rm for ~all} \,\,\,\, 0 \le p \le k,$$
in which case, according to Lemma \ref{lem:*k-coincidence}, when $X$ is projective we have  $H^{p,q} (X) =\mathit{IH}^{p,q} (X) $ for all $0\le q\le n$.

Understanding the vanishing of various cohomologies of $\gr^F_{-k}\DR(\K_X^\bullet)$, or conditions on the dimension of the support of these cohomologies, provides a useful tool towards refining this type of implication in order to include further Hodge numbers. We first analyze vanishing statements.

\begin{lem}
\label{lem: vanishing of grdr K}
We have 
$$\gr^F_{-k}\DR(\K_X^\bullet)=0 \,\,\,\,\,\,{\rm for~ all} \,\,\,\, k\notin[0,n-1].$$ 
When $k\in[0,n-1]$, we have
$$
\H^{i}\gr^F_{-k}\DR\left(\K_X^\bullet\right)=0 \,\,\,\,\,\,{\rm for~ all} \,\,\,\, i\le k-n \,\,\,\,{\rm  and}  \,\,\,\, i>0.
$$
In other words, the cohomologies of $\gr^F_{-k}\DR\left(\K_X^\bullet\right)$ are supported on degrees $[k-n+1,0]$.
\end{lem}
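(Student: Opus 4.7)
My plan is to extract both parts from the distinguished triangle
\[\gr^F_{-k}\DR(\K_X^\bullet)\to \DB^k_X[n-k]\to I\DB^k_X[n-k]\xrightarrow{+1}\]
combined with elementary bounds on $\DB^k_X$ and $I\DB^k_X$. The cases $k<0$ and $k>n$ of the first part are immediate: both $\DB^k_X$ and $I\DB^k_X$ vanish there, by convention for $k<0$, and because the nonzero cohomology of each lies in $[0,n-k]$, which is empty, for $k>n$. The case $k=n$ of the first part will be subsumed by the same analysis as the second, since then the allowed degree range $[k-n+1,0]=[1,0]$ is empty. So I focus on establishing the cohomological bound uniformly for $k\in[0,n]$.

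For the upper bound $\H^i\gr^F_{-k}\DR(\K_X^\bullet)=0$ when $i>0$, I would first show that $\K_X^\bullet\in D^{\le 0}{\rm MHM}(X)$. This follows from the long exact sequence attached to the defining triangle of $\K_X^\bullet$: the vanishings $\H^i(\IC_X^H)=0$ for $i\neq 0$ and $\H^i(\Q_X^H[n])=0$ for $i>0$ force $\H^i(\K_X^\bullet)=0$ for $i>0$. Since $\gr^F_{-k}\DR$ sends any single mixed Hodge module into ${\bf D}^{\le 0}_{\rm coh}(X)$, right $t$-exactness (via truncation triangles and induction on the number of nonzero cohomologies) propagates this to $\gr^F_{-k}\DR(\K_X^\bullet)\in {\bf D}^{\le 0}_{\rm coh}(X)$.

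For the lower bound $\H^i\gr^F_{-k}\DR(\K_X^\bullet)=0$ when $i\le k-n$, I would read the long exact sequence of sheaf cohomologies coming from the triangle. Since $\DB^k_X$ and $I\DB^k_X$ are supported in non-negative degrees, both $\H^i(\DB^k_X[n-k])$ and $\H^{i-1}(I\DB^k_X[n-k])$ vanish whenever $i<k-n$, giving the vanishing strictly below the critical degree. At $i=k-n$ the sequence reduces to
\[0\to \H^{k-n}\gr^F_{-k}\DR(\K_X^\bullet)\to \H^0\DB^k_X\xrightarrow{\H^0(\gamma_k)} \H^0 I\DB^k_X,\]
and the problem reduces to the injectivity of $\H^0(\gamma_k)$.

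The main obstacle is this injectivity, which I expect to handle by a torsion-freeness argument. On the smooth locus of $X$ the morphism $\gamma_k$ restricts to the identity of $\Omega^k$, so $\H^0(\gamma_k)$ is an isomorphism there and its kernel is supported on $\Sing(X)$. But by the general property recalled in Section \ref{scn:DB}, $\H^0\DB^k_X$ embeds in $f_*\Omega^k_{\widetilde X}$ for any resolution $f\colon\widetilde X\to X$ and is in particular torsion-free; a torsion-free sheaf on the equidimensional variety $X$ admits no nonzero subsheaf supported on a proper closed subscheme. Therefore the kernel vanishes, $\H^0(\gamma_k)$ is injective, and the argument is complete.
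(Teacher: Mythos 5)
Your proof is correct and follows essentially the same route as the paper's. Both arguments use the distinguished triangle $\gr^F_{-k}\DR(\K_X^\bullet)\to \DB^k_X[n-k]\to I\DB^k_X[n-k]\xrightarrow{+1}$ together with the vanishing of $\DB^k_X$ and $I\DB^k_X$ outside $k\in[0,n]$; the upper bound $i\le 0$ is obtained in both cases by first placing $\K_X^\bullet$ in $D^{\le 0}{\rm MHM}(X)$ (via perverse cohomology bounds on $\Q^H_X[n]$ and $\IC_X^H$) and then propagating through $\gr^F_{-k}\DR$; and the lower bound $i>k-n$ reduces, in both, to the injectivity of $\H^0\DB_X^k\to\H^0 I\DB_X^k$, which is settled by the torsion-freeness of $\H^0\DB_X^k$ and the fact that the map is an isomorphism over the smooth locus.
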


\begin{proof}
It is clear from the definition that $\DB^k_X=0$ for $k\notin[0,n]$. Likewise, we have $I\DB^k_X=0$ for $k\notin[0,n]$, for instance from Lemma \ref{prop: 0th cohomology of IDB}.
Therefore, $\gr^F_{-k}\DR(\K_X^\bullet)=0$ for all $k\notin[0,n]$. The vanishing for $k=n$ in the first statement of the Lemma is contained in the rest of the proof.

\noindent
{\em Claim 1.} For $k\in [0,n]$, we have $\H^{i}\gr^F_{-k}\DR\left(\K_X^\bullet\right)=0$ for all $i>0$.

Recall from the beginning of Section \ref{scn:IC} that the perverse cohomologies of $\Q^H_X[n]$ are supported on non-positive degrees, and the morphism $\gamma_X$ is the composition of quotient morphisms
$$
\Q^H_X[n]\to \H^0\Q^H_X[n]\to \IC^H_X=\gr^W_n  \H^0  \Q^H_X[n].
$$
Hence, the cohomologies of $\K^\bullet_X$ are supported in non-positive degrees. Using the canonical truncation at degree $0$, the object $\K^\bullet_X$ can be represented by a complex of mixed Hodge modules supported in non-positive degrees, hence the cohomologies of $\gr^F_{-k}\DR\left(\K_X^\bullet\right)$ are also supported in non-positive degrees.

 \noindent
{\em Claim 2.}  For $k\in [0,n]$, we have $\H^{i}\gr^F_{-k}\DR\left(\K_X^\bullet\right)=0$ for all $i\le k-n$.

From Lemma \ref{prop: 0th cohomology of IDB}, we have $\H^iI\DB^k_X=0$ for $i<0$, and we know that $\H^i\DB_X^k=0$ for $i<0$ as well. Using the distinguished triangle \eqref{eqn: grdr Q to IC}, it suffices to prove that the morphism
$$
\H^0\DB^k_X\to \H^0I\DB_X^k
$$
is an injection. But this is clear from the fact that $\H^0\DB^k_X$ is torsion-free (see Section \ref{scn:DB}), as the two sheaves are naturally isomorphic on the smooth locus of $X$.

Claims $1$ and $2$ complete the proof of the Lemma. 
\end{proof}

In order to say more, we make use of the following simple general fact:

\begin{lem}
\label{lem: dim supp cohomology vanishing}
Let $X$ be a projective variety and $\mathcal C^\bullet\in {\bf D}^b_{\rm coh}(X)$ be an object in the derived category of coherent sheaves on $X$. If 
for some non-negative integer $m$ we have
$$
\dim \left(\Supp\left(\H^{i}\mathcal C^\bullet\right)\right)\le m-i \quad {\rm for~all}\,\,\,\, i,
$$
then
$$
\HH^q\left(X,\mathcal C^\bullet\right)=0 \,\,\,\,\,\, {\rm for~ all} \,\,\,\,q>m.
$$
\end{lem}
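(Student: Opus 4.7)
The plan is to run the hypercohomology spectral sequence
$$
E_2^{p,q}=H^p\!\left(X,\H^{q}\mathcal C^\bullet\right)\ \Longrightarrow\ \HH^{p+q}\!\left(X,\mathcal C^\bullet\right)
$$
and show that every $E_2$-term contributing to $\HH^q$ with $q>m$ already vanishes, which then forces the abutment to vanish. Since the $E_2$-page already controls the abutment up to a finite filtration, this will be sufficient.

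The key input is Grothendieck's vanishing theorem: for any coherent (more generally, abelian) sheaf $\mathcal F$ on a Noetherian topological space, $H^p(X,\mathcal F)=0$ for $p>\dim\Supp(\mathcal F)$. Apply this to $\mathcal F=\H^{q}\mathcal C^\bullet$. By hypothesis $\dim\Supp(\H^{q}\mathcal C^\bullet)\le m-q$, so
$$
E_2^{p,q}=H^p\!\left(X,\H^{q}\mathcal C^\bullet\right)=0\quad\text{whenever}\quad p>m-q,
$$
i.e.\ whenever $p+q>m$. Since $\mathcal C^\bullet$ lies in $\mathbf{D}^b_{\rm coh}(X)$, only finitely many $\H^{q}\mathcal C^\bullet$ are nonzero, so the spectral sequence converges and a standard diagonal argument yields $\HH^{r}(X,\mathcal C^\bullet)=0$ for all $r>m$.

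There is essentially no obstacle here: the only mild point to check is that the convergence of the spectral sequence is unproblematic, which follows from boundedness of $\mathcal C^\bullet$. (If one prefers to avoid spectral sequences, the same conclusion can be obtained by induction on the length of $\mathcal C^\bullet$ using the standard truncation triangle $\tau_{\le q-1}\mathcal C^\bullet\to \tau_{\le q}\mathcal C^\bullet\to \H^{q}\mathcal C^\bullet[-q]\xrightarrow{+1}$ and the same Grothendieck vanishing applied to each cohomology sheaf.)
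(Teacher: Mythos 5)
Your proof is correct and uses exactly the same approach as the paper: the hypercohomology spectral sequence $E_2^{p,q}=H^p(X,\H^q\mathcal C^\bullet)$ together with Grothendieck vanishing for sheaves whose support has small dimension. The paper simply says ``$E_2^{i,j}=0$ by our hypothesis'' where you spell out the Grothendieck-vanishing step; otherwise the arguments coincide.
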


\begin{proof}
Consider the spectral sequence
$$
E_2^{i,j}=\HH^i(X,\H^{j}\mathcal C^\bullet) \Longrightarrow \HH^{i+j}(X,\mathcal C^\bullet).
$$
If $i+j>m$, then $E_2^{i,j}=0$ by our hypothesis. Therefore, $\HH^{q}(X,\mathcal C^\bullet)=0$ for $q>m$.
\end{proof}

In particular, if for fixed $p$ and $k$ we happen to have
\begin{equation}
\label{eqn: bound on dim supp cohomology}
\dim \left(\Supp\left(\H^{-n+p+i}\gr^F_{-p}\DR\left(\K_X^\bullet\right)\right)\right)\le n-i-k-1 \quad {\rm for~all}\,\,\,\, i,
\end{equation}
then 
$$\HH^q \big(X,\gr^F_{-p}\DR\left(\K_X^\bullet\right)[p - n]\big)=0 \,\,\,\,\,\,{\rm for~ all} \,\,\,\, q\ge n-k.$$ 
Therefore, by the long exact sequence of hypercohomologies induced by \eqref{eqn: grdr Q to IC}, we have
$$
\HH^q (X,\DB^p_X)=\HH^q(X,I\DB_X^p) \,\,\,\,\,\, {\rm for~all} \,\,\,\, q\ge n-k.
$$

The crucial point is that \eqref{eqn: bound on dim supp cohomology} for a certain range of $p$ is in fact sufficient to establish this for all $p$, as described in the following:

\begin{prop}
\label{prop: dim supp inequality of K}
Suppose that for a non-negative integer $k$, \eqref{eqn: bound on dim supp cohomology} is satisfied for all $0\le p\le k$. Then the same inequality 
\eqref{eqn: bound on dim supp cohomology} holds  in fact for all $0\le p\le n$, that is
$$
\dim \left(\Supp\left(\H^{-n+p+i}\gr^F_{-p}\DR\left(\K_X^\bullet\right)\right)\right)\le n-i-k-1, \quad {\rm for~all~} i \mathrm{\;~and~ all\;}p\in [0,n].
$$
If furthermore $X$ is projective, then we consequently have
$$
\HH^q (X,\DB^p_X)=\HH^q(X,I\DB_X^p)
$$
for all $0\le p\le n$ and $n-k\le q\le n$, so in particular $\underline h^{p,q} (X) =I\underline h^{p,q} (X)$ in this range.
\end{prop}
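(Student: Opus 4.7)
The plan is to induct on $n = \dim X$, using Lemma \ref{lem: cutting hyperplane of K} to shuttle between $X$ and a very general hyperplane section $X' := X \cap L$, where $X$ is embedded locally in a smooth $Y$ and $L \subset Y$. It is convenient to reindex \eqref{eqn: bound on dim supp cohomology} by setting $j = -n + p + i$, turning it into the cleaner statement
$$\dim \Supp \H^j \gr^F_{-p}\DR(\K_X^\bullet) \le p - j - k - 1,$$
whose form no longer involves $n$. The base case $n = 0$ is trivial since $\K_X^\bullet = 0$. The inductive step rests on the distinguished triangle
$$N^*_{L/Y}\otimes_{\O_L}\gr^F_{-p+1}\DR(\K_{X'}^\bullet) \to \O_L \otimes^L_{\O_Y}\gr^F_{-p}\DR(\K_X^\bullet) \to \gr^F_{-p}\DR(\K_{X'}^\bullet)[1] \xrightarrow{+1}$$
supplied by Lemma \ref{lem: cutting hyperplane of K}, which is used twice: first to push the hypothesis down to $X'$, then to pull the conclusion back up to $X$.

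For the downward pass, I show that the reindexed bound holds on $X'$ for all $p \in [0,k]$ by induction on $p$. When $p = 0$, Lemma \ref{lem: vanishing of grdr K} forces $\gr^F_1 \DR(\K_{X'}^\bullet) = 0$, so the triangle collapses to $\O_L \otimes^L \gr^F_0 \DR(\K_X^\bullet) \simeq \gr^F_0 \DR(\K_{X'}^\bullet)[1]$, and generic intersection with $L$ drops the support dimension by one, producing the desired bound on $X'$. For $p \in [1, k]$, the long exact sequence of the triangle controls the support dimensions of $\H^{\bullet}\gr^F_{-p}\DR(\K_{X'}^\bullet)$ by those of $\O_L \otimes^L \gr^F_{-p}\DR(\K_X^\bullet)$ (bounded via the hypothesis on $X$ together with one dimension lost under $\O_L \otimes^L$) and of $\gr^F_{-(p-1)}\DR(\K_{X'}^\bullet)$ (bounded by the previous induction step). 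The inductive hypothesis on $n$ then upgrades the bound on $X'$ to all $p \in [0, n-1]$. For the upward pass, $p = n$ is vacuous by Lemma \ref{lem: vanishing of grdr K}, while for $p \in [k+1, n-1]$ both the left-hand and right-hand terms of the triangle now obey the $X'$-bound, so the long exact sequence yields
$$\dim \Supp \H^j\bigl(\O_L \otimes^L \gr^F_{-p}\DR(\K_X^\bullet)\bigr) \le p - j - k - 2,$$
and very-generality of $L$ promotes this to $\dim \Supp \H^j \gr^F_{-p}\DR(\K_X^\bullet) \le p - j - k - 1$.

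The projective statement is then a direct application of Lemma \ref{lem: dim supp cohomology vanishing}: the bound rewritten as $\dim \Supp \H^j \gr^F_{-p}\DR(\K_X^\bullet) \le (p-k-1) - j$ gives $\HH^m(X, \gr^F_{-p}\DR(\K_X^\bullet)) = 0$ for $m > p - k - 1$, and feeding this into the long exact sequence of the triangle \eqref{eqn: grdr Q to IC} produces the isomorphism $\HH^q(X, \DB_X^p) \cong \HH^q(X, I\DB_X^p)$ for $n - k \le q \le n$, hence $\underline h^{p,q}(X) = I\underline h^{p,q}(X)$. The delicate step is the upward pass: one must know that for a bounded coherent complex $\mathcal F$ on $Y$ and a very general Cartier divisor $L$, $\dim \Supp \H^j(\O_L \otimes^L \mathcal F) \ge \dim \Supp \H^j \mathcal F - 1$, so that the bound on the tensored object genuinely controls the original. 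This relies on choosing $L$ transversal to each irreducible component of $\Supp \H^j \mathcal F$ for every $j$ and prevents Tor contributions from obscuring the support; this is the one place where the very generality of $L$ is truly used.
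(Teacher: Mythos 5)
Your proposal follows essentially the same strategy as the paper's proof: induction on the dimension, Lemma \ref{lem: cutting hyperplane of K} and its long exact sequence as the engine, a downward pass to establish the hypothesis on $X' = X\cap L$ for $p\in[0,k]$, the inductive hypothesis to propagate it to $p\in[0,n-1]$ on $X'$, and then an upward pass back to $X$. Your reindexing by $j=-n+p+i$ is a nice cosmetic cleanup, and the downward pass and the projective-case conclusion via Lemma \ref{lem: dim supp cohomology vanishing} are correct.

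There is however a genuine gap in the upward pass, concentrated precisely in the inequality you single out as ``the delicate step.'' The inequality
$$\dim\Supp\,\H^j\!\bigl(\O_L\otimes^L\mathcal F\bigr)\ \ge\ \dim\Supp\,\H^j\mathcal F-1$$
is \emph{false} when $\dim\Supp\,\H^j\mathcal F=0$: a very general hyperplane $L$ misses a zero-dimensional set entirely, so the subsheaf $\O_L\otimes_{\O_Y}\H^j\mathcal F\subseteq\H^j(\O_L\otimes^L\mathcal F)$ vanishes and the left side may be $-\infty$ while the right side is $-1$. (Also, your framing of the danger is slightly off: since $L$ is a Cartier divisor, $\O_L\otimes_{\O_Y}\H^j\mathcal F$ is always a \emph{subsheaf} of $\H^j(\O_L\otimes^L\mathcal F)$ by the long exact sequence for $\mathcal F(-L)\xrightarrow{\cdot t}\mathcal F\to\O_L\otimes^L\mathcal F$, so Tor terms can only \emph{enlarge} the support, never obscure it; the only failure mode is the zero-dimensional one.) Consequently, your promotion from the bound $p-j-k-2$ on $\O_L\otimes^L\gr^F_{-p}\DR(\K_X^\bullet)$ to the bound $p-j-k-1$ on $\gr^F_{-p}\DR(\K_X^\bullet)$ only works when the target bound is $\ge 0$; when $p-j-k-1<0$ you are silently asserting a vanishing you have not proved.

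The paper closes exactly this gap by a second appeal to Lemma \ref{lem: vanishing of grdr K}: the hyperplane argument only yields the bound $\max\{n-i-k-1,0\}$, and in the residual case $n-i-k-1<0$ together with $p>k$ one has $-n+p+i>0$, so $\H^{-n+p+i}\gr^F_{-p}\DR(\K_X^\bullet)=0$ outright. In your notation this is the observation that when $p-j-k-1<0$ and $p\ge k+1$, one has $j\ge p-k\ge 1>0$, hence $\H^j\gr^F_{-p}\DR(\K_X^\bullet)=0$ by Lemma \ref{lem: vanishing of grdr K}, and there is nothing left to bound. Adding this one sentence makes your argument match the paper's and closes the gap; as written, the proposal relies on an inequality that fails in precisely the degenerate case.
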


We follow the convention that the dimension of the empty set is $-\infty$.

\begin{proof}
The claims in the projective case are immediate from the argument that precedes the statement of the Proposition. Therefore it suffices 
to prove the following claim, which we do by induction on the dimension $n$.

\noindent
{\em Claim.} If \eqref{eqn: bound on dim supp cohomology} holds for all $0\le p\le k$, then \eqref{eqn: bound on dim supp cohomology} holds for all $0\le p\le n$.

The base case $n=k$ is trivial, so we assume $n > k$. Note that the statement \eqref{eqn: bound on dim supp cohomology} is local, so  we may assume that $X$ is embeddable in a smooth quasi-projective variety $Y$. We take a very general hyperplane section $\iota:L\hookrightarrow Y$.
From the distinguished triangle
$$
N^*_{L/Y}\otimes_{\O_L}\gr^F_{-p+1}\DR\left(\K^\bullet_{X\cap L}\right)\to \O_L\tensor_{\O_Y}\gr^F_{-p}\DR\left(\K^\bullet_X\right)\to \gr^F_{-p}\DR\left(\K^\bullet_{X\cap L}\right)[1]\xrightarrow{+1}
$$
provided by Lemma \ref{lem: cutting hyperplane of K}, we have a long exact sequence of cohomologies
\begin{multline}
\label{eqn: LES of hyperplane of K}
\to \O_L\tensor_{\O_Y}\H^{-n+p+i}\gr^F_{-p}\DR\left(\K_X^\bullet\right)\to\H^{-(n-1)+p+i}\gr^F_{-p}\DR\left(\K^\bullet_{X\cap L}\right)\\
\to N^*_{L/Y}\otimes_{\O_L}\H^{-(n-1)+p+i}\gr^F_{-p+1}\DR\left(\K^\bullet_{X\cap L}\right)\to
\end{multline}
Note that we have $\gr^F_{1}\DR\left(\K^\bullet_{X\cap L}\right)=0$ by Lemma \ref{lem: vanishing of grdr K}. Combined with the sequence above, this implies that \eqref{eqn: bound on dim supp cohomology} holds for $X\cap L$ and $p=0$, that is
$$
\dim \left(\Supp\left(\H^{-(n-1)+i}\gr^F_{0}\DR\left(\K_{X\cap L}^\bullet\right)\right)\right)\le (n-1)-i-k-1, \quad {\rm for~all}\,\,\,\, i.
$$
Since \eqref{eqn: bound on dim supp cohomology} holds for $X$ and $0 \leq p \leq k$, it is then straightforward to see, by induction on $p$ and using \eqref{eqn: LES of hyperplane of K}, that \eqref{eqn: bound on dim supp cohomology} also holds for $X \cap L$ and $0 \leq p \leq k$.

Thus, by the induction hypothesis on $n$, we conclude that \eqref{eqn: bound on dim supp cohomology} holds for $X \cap L$ and $0 \leq p \leq n-1$. Using this and \eqref{eqn: LES of hyperplane of K}, we have
$$
\dim \left(\Supp\left(\H^{-n+p+i}\gr^F_{-p}\DR\left(\K_{X}^\bullet\right)\right)\right)\le \max\left\{n-i-k-1,0\right\}, \quad {\rm for~all}\,\,\,\, i,
$$
for all $0\le p\le n$. Therefore, it suffices to prove
$$
\H^{-n+p+i}\gr^F_{-p}\DR\left(\K_{X}^\bullet\right)=0
$$
when $n-i-k-1<0$ and $p>k$. In this case, we have $-n+p+i>0$, and thus Lemma \ref{lem: vanishing of grdr K} completes the proof.
\end{proof}

\subsection{Hodge symmetry via condition $D_m$}\label{sec: Hodge symmetry for singular varieties}
This section is devoted to the proof of the following theorem, which thanks to Proposition \ref{prop:krat*} implies the first statement of Theorem \ref{thm:Hodge-symmetry-krat} in the Introduction, answering in particular Laza's question on the symmetry of Hodge-Du Bois numbers for higher rational singularities.

\begin{thm}\label{thm: Hodge symmetry for k-rational}
Let $X$ be a projective equidimensional variety of dimension $n$, satisfying condition $D_m$.  Then we have natural identifications
$$
H^{p,q} (X) = \mathit{IH}^{p,q}(X), \,\,\,\,H^{q,p} (X) = \mathit{IH}^{q,p}(X),$$
$$H^{n-p,n-q} (X) = \mathit{IH}^{n-p,n-q}(X), \,\,\,\,H^{n-q,n-p} (X) = \mathit{IH}^{n-q,n-p}(X)
$$
for all $0\le p\le m$ and $0\le q\le n$. In particular, we have 
$$
\underline h^{p,q} (X) =\underline h^{q,p} (X) =\underline h^{n-p,n-q} (X) =\underline h^{n-q,n-p} (X)
$$
for all such $p$ and $q$.
\end{thm}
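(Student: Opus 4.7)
The plan is to establish each of the four claimed identifications separately, after which the equality of Hodge-Du Bois numbers in the ``In particular'' clause follows from the horizontal and Poincar\'e symmetries of intersection cohomology. The first identification, $H^{p,q}(X) = \mathit{IH}^{p,q}(X)$ for $0 \le p \le m$, is immediate from Lemma \ref{lem:*k-coincidence}: under $D_m$, the triangle \eqref{eqn: grdr Q to IC} gives $\gr^F_{-p}\DR(\K_X^\bullet) = 0$ for $p \le m$, and the long exact sequence of hypercohomology yields the isomorphism. The fourth identification, $H^{n-q, n-p}(X) = \mathit{IH}^{n-q, n-p}(X)$, is the statement that $H^{a,b}(X) = \mathit{IH}^{a,b}(X)$ for any $a$ and $b \in [n-m, n]$; this follows from Proposition \ref{prop: dim supp inequality of K} applied with $k = m$, whose hypothesis is trivially satisfied under $D_m$.

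For the third identification, $H^{n-p, n-q}(X) = \mathit{IH}^{n-p, n-q}(X)$ with $p \le m$, I would exploit Verdier self-duality of the intersection complex. The dual characterization of $D_m$ from Remark \ref{rmk:terminology} reads $\DB_X^p \xrightarrow{\sim} \D_X(\DB_X^{n-p})$ for $p \le m$; combined with $\DB_X^p \cong I\DB_X^p$ from $D_m$ and Saito's duality $\D_X I\DB_X^p \cong I\DB_X^{n-p}$ (up to Tate shift) for the intersection Hodge module, biduality upgrades the hypothesis to $\DB_X^r \xrightarrow{\sim} I\DB_X^r$ for $r \in [n-m, n]$. Applying the argument for the first identification to this extended range delivers the third.

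The main obstacle is the second identification, $H^{q, p}(X) = \mathit{IH}^{q, p}(X)$ for $p \le m$ and arbitrary $q \in [0, n]$, since now the first Hodge index $q$ is unconstrained and no direct duality argument applies at the level of the Du Bois complexes. The plan is to prove injectivity of the natural map and separately match dimensions. For injectivity, Weber's theorem (Remark \ref{rmk:Weber}) identifies $\ker(H^i(X, \C) \to \mathit{IH}^i(X, \C))$ with $W_{i-1} H^i(X, \C)$, realized as the image of $\HH^{i-n}(X, \K_X^\bullet)$ via the defining triangle \eqref{eqn: Q to IC triangle}. Condition $D_m$ combined with the Hodge-to-de Rham spectral sequence degeneration on projective $X$ forces $\gr^F_{-p} \HH^{i-n}(X, \K_X^\bullet) = 0$ for $p \le m$, hence $\gr^F_{-p} W_{i-1} H^i(X, \C) = 0$ for $p \le m$. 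The crucial further step is conjugate symmetry on each pure graded piece $\gr^W_w W_{i-1} H^i(X, \C)$ with $w \le i - 1$, which propagates this vanishing to $\gr^F_{-p} W_{i-1} H^i = 0$ also for $p \ge w - m$, hence in particular for $p \ge i - 1 - m$. Specializing to $p = q$ and $i = p' + q$ with $p' \le m$, the required inequality $q \ge i - 1 - m$ reduces to $p' \le m + 1$, which holds; thus $\gr^F_{-q} W_{p'+q-1} H^{p'+q} = 0$ and the natural map $H^{q, p'}(X) \to \mathit{IH}^{q, p'}(X)$ is injective. Dimension equality follows from Friedman-Laza's Lemma \ref{lem:FL}: the vanishing $\gr_F^p W_{i-1} H^i = 0$ for $p \le m$ is equivalent to the numerical symmetry $\h^{q, p'}(X) = \h^{p', q}(X)$, which combined with the first identification and the horizontal symmetry $I\h^{p,q} = I\h^{q,p}$ of intersection cohomology gives $\h^{q, p'}(X) = I\h^{q, p'}(X)$. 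Injectivity together with matching dimensions delivers the isomorphism.
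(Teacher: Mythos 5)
Your proposal is correct and ultimately relies on the same structural ingredients as the paper (the RHM-defect triangle \eqref{eqn: Q to IC triangle}, the weight bound on $\K^\bullet_X$ from Proposition \ref{prop: weight of K}, the conjugate-symmetry propagation of Proposition \ref{prop: range of nonzero grdr}, and Proposition \ref{prop: dim supp inequality of K}), but it organizes the argument differently in two of the four regions. For the second identification ($H^{q,p}$ with $p\le m$), the paper argues purely via the long exact sequence of hypercohomology: it shows $\gr^F_{-p}\HH^k(X,\K^\bullet_X[-n])=0$ both for $p\le m$ (by $D_m$ and strictness) and for $p\ge k-1-m$ (by the weight bound plus Proposition \ref{prop: range of nonzero grdr}), so that for $p\ge k-m$ the $\K^\bullet_X$-terms on both sides of the LES vanish and one gets the isomorphism in one stroke. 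You instead split this into injectivity (via Weber's theorem, Remark \ref{rmk:Weber}, plus the same conjugate-symmetry propagation) and a separate dimension count using Lemma \ref{lem:FL}; this is valid but more circuitous, since you end up reproving the conjugate symmetry inline rather than citing Proposition \ref{prop: range of nonzero grdr}, and the dimension count buys you nothing that the direct LES argument doesn't already give. For the third identification, the paper invokes Proposition \ref{prop: 0 to k then n-k-1 to n}(i), which is obtained from the weight bound on $\K^\bullet_X$ and yields the range $p\ge n-m-1$ (one level better than what you need, recorded in the Remark after the Theorem). You instead bidualize the duality characterization of $D_m$ from Remark \ref{rmk:terminology} against Saito self-duality of $\IC^H_X$ to get the range $p\ge n-m$; this is a genuinely different (and pleasingly direct) route, though it does rely on the compatibility of the duality isomorphism with the comparison map $\gamma_{n-p}$ -- harmless for the numerical conclusion, but worth noting if one insists on the ``natural identification.'' Two small notational/terminological points: what you call ``Hodge-to-de Rham spectral sequence degeneration'' applied to $\K^\bullet_X$ is really the strictness of the Hodge filtration under projective pushforward of mixed Hodge modules (the classical degeneration for $\DB^\bullet_X$ being a special case); and your claim that Proposition \ref{prop: dim supp inequality of K} is applicable with $k=m$ is correct because $D_m$ forces \eqref{eqn: bound on dim supp cohomology} to hold vacuously for $0\le p\le m$, exactly as the paper uses it.
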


\begin{rmk}
The proof will also give the extra conclusion 
$$H^{p,q} (X) = \mathit{IH}^{p,q}(X), \,\,\,\,\,\,{\rm for}\,\,\,\,p = n - m - 1$$
and all $q$, under the same hypothesis.
\end{rmk}

When $X$ has $m$-rational singularities, the equality of the first three numbers in the last statement was shown in \cite[Theorem 3.24]{FL22} when $X$ is a local complete intersection, and in \cite[Corollary 4.1]{SVV23} in general.

\smallskip

\begin{rmk}[{Rational singularities}]\label{rmk:symm-rational}
Under the assumption $\DB_X^0 \simeq I\DB_X^0$, we have 
$$
\underline h^{0,q}(X)=\underline h^{q,0}(X)=\underline h^{n,n-q}(X)=\underline h^{n-q,n}(X)
$$
for all $0\le q\le n$,  i.e. the symmetry of the boundary of the Hodge-Du Bois diamond.  Note that $\H^0\DB_X^0\simeq \H^0I\DB_X^0$ implies by Lemma \ref{prop: 0th cohomology of IDB} that  the seminormalization $X^{\rm sn}$ of $X$ is equal to the normalization of $X$, as $\H^0\DB_X^0 \simeq \O_{X^{\rm sn}}$  by \cite[Proposition 5.2]{Saito00}.

Note that equality of all four numbers is new even under the stronger assumption of $X$ having rational singularities, which is equivalent to 
$\O_X \simeq \DB_X^0 \simeq I\DB_X^0$ (see e.g. \cite[Proposition 4.4]{Park23}). 
\end{rmk}

We start with some preliminaries, where $X$ is not necessarily assumed to be projective, showing that there are more fundamental symmetries lying behind 
those of Hodge numbers.

Note first that for an object $\M^\bullet\in D^b{\rm MHM} (X)$ in the derived category of mixed Hodge modules on $X$, it is easy to check that the condition
$$
\gr^F_{-p}\DR(\M^\bullet)=0 \quad \mathrm{for~all}\quad p\le k
$$
is equivalent to the condition
$$
\gr^F_{-p}\DR(\H^i\M^\bullet)=0  \quad \mathrm{for~all}\quad p\le k \quad {\rm and ~all} \quad i.
$$
This becomes apparent once we consider dual statements: $\gr^F_{p}\DR(\dual\M^\bullet)=0$ for $p\le k$ if and only if the index of the first nonzero term in the Hodge filtration of $\H^i(\dual\M^\bullet)$ is at least $k+1$ for all $i$. When applied to $\K^\bullet_X$, this observation leads to the following:

\begin{prop}
\label{prop: 0 to k then n-k-1 to n}
Let $X$ be an equidimensional variety of dimension $n$, satisfying condition $D_m$. 
Then we additionally have:

\noindent
(i)  $\DB^p_X=I\DB^p_X$ for $n-m-1\le p\le n$.

\noindent
(ii) $\H^i(\K^\bullet_X)=0$ for $i\le -n+2(m+1)$.
\end{prop}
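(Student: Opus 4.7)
Both statements will be deduced in parallel from Propositions \ref{prop: weight of K} and \ref{prop: range of nonzero grdr}, applied to the individual cohomology modules of the RHM defect object. The starting point is that condition $D_m$ translates, via the cone identification following \eqref{eqn: grdr Q to IC}, into the vanishing
$$
\gr^F_{-p}\DR(\K^\bullet_X) = 0 \quad \text{for } 0 \le p \le m.
$$
By the equivalence recorded just before the statement of the proposition---namely, that vanishing of $\gr^F_{-p}\DR$ on a bounded complex in $D^b{\rm MHM}(X)$ is the same as its vanishing on every cohomology module---this yields
$$
\gr^F_{-p}\DR(\H^i\K^\bullet_X) = 0 \quad \text{for all } 0 \le p \le m \text{ and all } i,
$$
where only indices $i \le 0$ are relevant since $\K^\bullet_X$ has no positive-degree cohomology (Lemma \ref{lem: vanishing of grdr K}).

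Now apply Proposition \ref{prop: range of nonzero grdr} to each single mixed Hodge module $\H^i\K^\bullet_X$. By Proposition \ref{prop: weight of K}, $\K^\bullet_X$ is of weight $\le n-1$, so $\H^i\K^\bullet_X$ is of weight $\le n+i-1$; the symmetric-vanishing proposition then promotes the previous vanishing to
$$
\gr^F_{-p}\DR(\H^i\K^\bullet_X) = 0 \quad \text{for all } p \ge n+i-m-1.
$$
For part (i), since $i \le 0$ implies $n+i-m-1 \le n-m-1$, this vanishing holds uniformly in $i$ whenever $p \ge n-m-1$. Reverting through the same equivalence produces $\gr^F_{-p}\DR(\K^\bullet_X) = 0$ throughout $n-m-1 \le p \le n$, which is precisely $\DB^p_X \simeq I\DB^p_X$ in that range by the cone description.

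For part (ii), the two vanishing ranges $p \le m$ and $p \ge n+i-m-1$ jointly cover every integer precisely when $n+i-m-1 \le m+1$, i.e., when $i \le 2(m+1)-n$. For such $i$, every graded piece $\gr^F_{-p}\DR(\H^i\K^\bullet_X)$ vanishes, which forces $\H^i\K^\bullet_X = 0$, since a nonzero mixed Hodge module necessarily has some nontrivial graded piece of its filtered de Rham complex. The one step requiring care is the combination of the weight bound of Proposition \ref{prop: weight of K} with the symmetric vanishing of Proposition \ref{prop: range of nonzero grdr} at the level of single cohomology modules; once that is in place, both conclusions reduce to a direct weight-and-range bookkeeping.
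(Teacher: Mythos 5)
Your proof is correct and follows essentially the same route as the paper's: condition $D_m$ gives $\gr^F_{-p}\DR(\H^i\K^\bullet_X)=0$ for $p\le m$ via the equivalence stated just before the proposition, the weight bound $\le n+i-1$ from Proposition \ref{prop: weight of K} combined with Proposition \ref{prop: range of nonzero grdr} gives vanishing for $p\ge n+i-1-m$, and the two ranges then yield (i) uniformly over $i\le 0$ and (ii) whenever they cover all of $\Z$. One small caveat worth noting: the equivalence between vanishing of $\gr^F_{-p}\DR$ on the complex and on all its cohomologies is stated in the paper only for half-infinite ranges $p\le k$ (with the dual version giving $p\ge k$); you implicitly invoke it for the range $p\ge n-m-1$ in part (i), which is justified by duality exactly as the paper does, but your phrasing of it as a blanket equivalence for an arbitrary set of $p$ is slightly stronger than what is true or needed.
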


\begin{proof}
The assumption means that $\gr^F_{-p}\DR(\K^\bullet_X)=0$ for $p\le m$. As explained in the previous paragraph, this implies that
$$
\gr^F_{-p}\DR(\H^i\K^\bullet_X)=0 \quad \mathrm{for~all}\quad p\le m \quad {\rm and ~all} \quad i.
$$
By Proposition \ref{prop: weight of K}, the $i$-th cohomology $\H^i\K^\bullet_X$ has weight $\le i+n-1$. Therefore we also have
$$
\gr^F_{-p}\DR(\H^i\K^\bullet_X)=0 \quad \mathrm{for~all}\quad p\ge i + n -1- m \quad {\rm and ~all} \quad i
$$
by Proposition \ref{prop: range of nonzero grdr}. Recall in addition that $\H^i\K^\bullet_X=0$ for $i>0$, by definition. We deduce that 
$\gr^F_{-p}\DR(\K^\bullet_X)=0$ for $p\ge n-m-1$, since the same holds for all the cohomologies of $\K^\bullet_X$. This is equivalent to
$$
\DB^p_X=I\DB^p_X\quad \mathrm{for} \quad n-m-1\le p\le n.
$$

Additionally, the argument shows that $\gr^F_{-p}\DR(\H^i\K^\bullet_X)=0$ for all $p$, if $i+n-m-2\le m$. As a consequence, we have $\H^i(\K^\bullet_X)=0$ for $i\le -n+2(m+1)$.
\end{proof}

As an immediate consequence, we record the crucial fact that condition $D_m$ leads to behavior that is roughly ``twice as good" when it comes to measuring the 
rational homology manifold condition, or the local cohomological defect. This implies in particular the second part of Theorem  \ref{thm:Hodge-symmetry-krat} in the Introduction.

\begin{cor}
\label{cor: lcdef of k-rational}
Let $X$ be an equidimensional variety of dimension $n$, satisfying condition $D_m$.  Then:

\noindent
(i) $X$ is a rational homology manifold away from a closed subset of codimension at least $2m+3$.

\noindent
(ii)  $\lcdef(X)\le \max\left\{n-2m-3,0\right\}$.
\end{cor}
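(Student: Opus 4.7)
For part (ii), the plan is to derive the bound directly from Proposition \ref{prop: 0 to k then n-k-1 to n}(ii). Since $\IC_X^H$ is concentrated in cohomological degree $0$, the long exact sequence associated to \eqref{eqn: Q to IC triangle} gives $\H^{-j}(\K_X^\bullet) \cong \H^{-j}(\Q_X^H[n])$ for every $j \ge 1$. Combining this with the vanishing $\H^i(\K_X^\bullet) = 0$ for $i \le -n + 2m + 2$ yields $\H^{-j}(\Q_X^H[n]) = 0$ for all $j \ge n - 2m - 2$, and the identification \eqref{eqn:lcdef-MHM} then produces $\lcdef(X) \le \max\{n - 2m - 3, 0\}$.

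For part (i), I first record a base case: if $n \le 2m + 2$, then the range $[-n + 2m + 3, 0]$ of possibly nonzero $\H^i(\K_X^\bullet)$ (the cohomologies vanish automatically for $i > 0$) is empty, so $\K_X^\bullet = 0$ and $X$ is a rational homology manifold by Lemma \ref{lem:RHM-gamma}. For general $n \ge 2m + 3$, the plan is to reduce to this base case by taking $k := n - 2m - 2$ very general hyperplane sections inside an ambient smooth variety containing $X$.

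The essential technical step is to check that condition $D_m$ descends to a very general hyperplane section $X \cap L$, using the distinguished triangle of Lemma \ref{lem: cutting hyperplane of K}. Starting from $\gr^F_1 \DR(\K^\bullet_{X \cap L}) = 0$, supplied by Lemma \ref{lem: vanishing of grdr K}, the hypothesis $\gr^F_{-p}\DR(\K^\bullet_X) = 0$ for $0 \le p \le m$ collapses that triangle into an isomorphism $\gr^F_{-p}\DR(\K^\bullet_{X \cap L}) \cong N^*_{L/Y} \otimes \gr^F_{-p+1}\DR(\K^\bullet_{X \cap L})$, so an induction on $p$ propagates the vanishing through $0 \le p \le m$.

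Iterating $k$ times, the resulting section $X' := X \cap L_1 \cap \dots \cap L_k$ is an equidimensional variety of dimension $2m + 2$ satisfying $D_m$, so $\K^\bullet_{X'} = 0$ by the base case. On the other hand, the isomorphism $\iota^* \K^\bullet_X \cong \K^\bullet_{X \cap L}[1]$ in Lemma \ref{lem: cutting hyperplane of K} gives $\Supp \K^\bullet_{X \cap L} = L \cap \Supp \K^\bullet_X$, and iterating yields $\Supp \K^\bullet_{X'} = L_1 \cap \dots \cap L_k \cap \Supp \K^\bullet_X$. For sufficiently general $L_i$ this intersection is empty precisely when $\dim \Supp \K^\bullet_X < k$, forcing $\dim \Supp \K^\bullet_X \le n - 2m - 3$, which is the claimed codimension bound. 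The main delicacy is the hyperplane descent of $D_m$; once that is in place, the conclusion is a clean dimension count via Lemma \ref{lem: cutting hyperplane of K}.
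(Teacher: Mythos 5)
Your proof is correct and follows essentially the same strategy as the paper's: the base case $n\le 2m+2$ forces $\K^\bullet_X=0$ via Proposition \ref{prop: 0 to k then n-k-1 to n}(ii) and Lemma \ref{lem: vanishing of grdr K}, and the general case descends to that base case by cutting with $n-2m-2$ very general hyperplanes using Lemma \ref{lem: cutting hyperplane of K}, then reading off the bound on $\dim\Supp\K^\bullet_X$. You have usefully spelled out the inductive argument for why condition $D_m$ passes to a very general hyperplane section, which the paper states more briefly (and this is also implicit in the proof of Proposition \ref{prop: dim supp inequality of K}). One small precision: in part (ii), the isomorphism $\H^{-j}(\K_X^\bullet)\simeq\H^{-j}(\Q_X^H[n])$ from the long exact sequence requires $j\ge 1$, so the intermediate vanishing should read $\H^{-j}(\Q_X^H[n])=0$ for $j\ge\max\{n-2m-2,1\}$ rather than simply $j\ge n-2m-2$; your stated conclusion $\lcdef(X)\le\max\{n-2m-3,0\}$ already accounts for this, so the final bound is right. (Incidentally, the paper's proof has a typo, ``$n-2m+2$'' hyperplane sections where $n-2m-2$ is meant; you have the correct count.)
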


\begin{proof}
By Proposition \ref{prop: 0 to k then n-k-1 to n}, when $n\le 2m+2$ we have $\K^\bullet_X=0$, that is $X$ is a rational homology manifold. Therefore we also 
have $\lcdef (X) = 0$.

Suppose now that $n\ge 2m+3$. This statement is local, so we assume that $X$ is embeddable in a smooth quasi-projective variety. Applying Lemma \ref{lem: cutting hyperplane of K}, condition $D_m$ is preserved under taking very general hyperplane sections. After taking $n-2m+2$  such hyperplane sections, the resulting $(2m+2)$-dimensional variety is therefore a rational homology manifold. Using again Lemma \ref{lem: cutting hyperplane of K} for the behavior of $\K^\bullet_X$ under taking very general hyperplane sections, this implies that $X$ is a rational homology manifold away from a closed subset of codimension at least $2m+3$.

Recall next from Proposition \ref{prop: 0 to k then n-k-1 to n} that $\H^i(\K^\bullet_X)=0$ for $i\le -n+2(m+1)$. Using the distinguished triangle ($\ref{eqn: Q to IC triangle}$) 
and the fact that $\IC_X^H$ only has cohomology in degree $0$, this implies that 
$$\H^i(\Q^H_X[n])=0 \,\,\,\,\,\,{\rm  for~all} \,\,\,\, i\le \min\left\{-n+2(m+1),-1\right\}.$$
According to Theorem \ref{thm:lcdef-top}, this gives $\lcdef(X)\le \max\left\{n-2m-3,0\right\}$.
\end{proof}

We are now ready to prove the main result of the section.

\begin{proof}[Proof of Theorem \ref{thm: Hodge symmetry for k-rational}]
We consider the long exact sequence of hypercohomology associated to the distinguished triangle ($\ref{eqn: Q to IC triangle}$), that is
$$
\cdots \to \HH^k(X,\K^\bullet_X[-n])\to H^k(X,\Q)\to \mathit{IH}^k (X,\Q)\to \HH^{k+1}(X,\K^\bullet_X[-n])\to \cdots 
$$

By Proposition \ref{prop: weight of K}, the object $\K^\bullet_X[-n]$ is of weight $\le -1$, hence the mixed Hodge structure $\HH^k(X,\K^\bullet_X[-n])$ is of weight $\le k-1$ (see \cite[4.5.2]{Saito90}). Since
$$
\gr^F_{-p}\HH^k(X,\K^\bullet_X[-n])=0 \,\,\,\,\,\,{\rm for} \,\,\,\,p \le m,
$$
the same then holds for $p\ge k-1-m$ by Proposition \ref{prop: range of nonzero grdr} and the discussion preceding it. This implies that
$$
\gr^F_{-p}H^k(X,\C) = \gr^F_{-p}\mathit{IH}^k(X,\C)
$$
for $p\le m$ or $p\ge k-m$. Hence, $H^{p,q} (X)  = \mathit{IH}^{p,q} (X)$ for $p\le m$ or $q\le m$.

By Proposition \ref{prop: 0 to k then n-k-1 to n}, we have $H^{p,q} (X) = \mathit{IH}^{p,q} (X) $ for $p\ge n-m-1$. By Proposition \ref{prop: dim supp inequality of K}, we have 
$H^{p,q} (X) = \mathit{IH}^{p,q} (X) $ for $q\ge n-m$. This shows all the natural isomorphisms at the level of spaces.

Finally, combining all this with the Hodge symmetry of intersection cohomology
$$
I\underline h^{p,q}(X)=I\underline h^{q,p}(X)=I\underline h^{n-p,n-q}(X)=I\underline h^{n-q,n-p}(X),
$$
we obtain the desired conclusion.
\end{proof}

\section{Hodge-Du Bois symmetry II: Weak Lefschetz theorems}

\subsection{Weak Lefschetz using the local cohomological defect}\label{scn:weak-Lef}
The classical Lefschetz hyperplane theorem, also known as the weak Lefschetz theorem, states that when $X$ is a smooth projective variety of dimension $n$ and $D$ is an ample effective Cartier divisor, the restriction map $H^i(X,\Q)\to H^i(D,\Q)$ is an isomorphism for $i\le n-2$ and injective when $i=n-1$.

We first record a generalization of this statement that uses the local cohomological defect. As mentioned in the Introduction, this result should be seen as the joint effort of a number of authors, starting with Ogus. The proof follows from the Riemann-Hilbert correspondence for local cohomology and the Artin vanishing theorem \cite[Theorem 4.1.1]{BBD}.

\begin{proof}[Proof of Theorem \ref{thm:Lefschetz-lcdef}]
Denote $U:=X\sm D$ and $j:U\hookrightarrow X$. According to Theorem \ref{thm:lcdef-top}, we have $\Q_U\in {}^p\!D^{\ge n-\lcdef(U)}_c(X,\Q)$.
%%depending on how you define lcdef, this might be immediate.. probably state this right after the definition
We consider the distinguished triangle
$$
j_!\Q_U\to \Q_X\to \Q_D\xrightarrow{+1}
$$
and its long exact sequence of hypercohomology
$$
\cdots \to H_c^i(U,\Q)\to H^i(X,\Q)\to H^i(D,\Q)\to \cdots
$$
If we denote by $a_U\colon U\to \mathrm{pt}$ the constant map to a point, the cohomology with compact support $H^i_c(U,\Q)$ is the $i$-th cohomology of the object ${a_U}_!\Q_U$.
Since $U$ is affine, ${a_U}_!$ is a left perverse exact functor by the generalized version of the Artin vanishing theorem \cite[Theorem 4.1.1]{BBD}. This implies that 
$$H_c^i(U,\Q)=0 \,\,\,\,\,\,{\rm  for} \,\,\,\, i\le n-1-\lcdef(U),$$
which thanks to the exact sequence above is equivalent to the assertion of the theorem.
\end{proof}

\begin{rmk}[{\bf Another approach via vanishing for Du Bois complexes}]\label{rmk:DB-vanishing}
It is well known that, at least for well-behaved divisors $D$ in a smooth variety $X$, the weak Lefschetz theorem is essentially equivalent to the Kodaira-Nakano vanishing theorem, via the Hodge decomposition. The same happens in the present context, at least if we use $\lcdef (X)$.

If $D$ is a general hyperplane section of $X$, then for each $p$ there is an exact triangle
$$\DB_D^{p -1} (-D) \to \DB_X^p |_{D} \to \DB_D^p \xrightarrow{+1}.$$
See \cite[Lemma 3.2]{SVV23}. Using this and the octahedral axiom, it is not hard to see that we have an exact triangle 
$$C_{X, D}^p \to \DB_X^p \to \DB_D^p \xrightarrow{+1},$$
where in turn the object $C_{X, D}^p$ sits in an exact triangle
\begin{equation}\label{eqn:DB-induction-triangle}
\DB_X^{p} (-D) \to  C_{X, D}^p \to  \DB_D^{p -1} (-D)  \xrightarrow{+1}.
\end{equation}
On the other hand, there is a dual version of the Nakano vanishing theorem for Du Bois complexes:\footnote{The subtlety here is that, unlike in the smooth case, this vanishing theorem is not the Serre dual of the typical Nakano-type vanishing theorem for Du Bois complexes, saying that $H^q (X, \DB_X^p \otimes L) = 0$ for 
$p + q > n$.}

\begin{thm}[{\cite[Theorem 5.1]{PS24}}]\label{thm:DB-vanishing}
If $L$ is an ample line bundle on a projective variety $X$, then 
$$\HH^q (X, \DB_X^p \otimes L^{-1}) = 0 \,\,\,\,\,\,{\rm for ~all }\,\,\,\, p + q \le  n -1 - \lcdef (X).$$
\end{thm}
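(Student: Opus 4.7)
The plan is to translate the desired vanishing into a statement about the graded de Rham complex of the constant Hodge module $\Q_X^H[n]$, and then combine Saito's Kodaira-Nakano vanishing with the control on cohomology Hodge modules of $\Q_X^H[n]$ provided by $\lcdef(X)$. First, via the identification $\DB_X^p \simeq \gr^F_{-p} \DR(\Q_X^H[n])[p-n]$, the stated vanishing is equivalent to
\begin{equation*}
\HH^{i}\bigl(X, \gr^F_{-p} \DR(\Q_X^H[n]) \otimes L^{-1}\bigr) = 0 \,\,\,\,\,\, {\rm for~all} \,\,\,\, i \le -1 - \lcdef(X),
\end{equation*}
where $i = p + q - n$. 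By Theorem \ref{thm:lcdef-top} (equivalently \eqref{eqn:lcdef-MHM}), the cohomology Hodge modules $\H^j(\Q_X^H[n])$ are nonzero only for $j \in [-\lcdef(X), 0]$.

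Next, I would use the canonical truncation triangles to build $\Q_X^H[n]$ inductively from its cohomology Hodge modules $\H^j(\Q_X^H[n])[-j]$. Applying $\gr^F_{-p} \DR(\cdot) \otimes L^{-1}$ and chaining together the resulting long exact sequences of hypercohomology reduces the problem to showing, for each $j \in [-\lcdef(X), 0]$,
\begin{equation*}
\HH^{i-j}\bigl(X, \gr^F_{-p} \DR(\H^j(\Q_X^H[n])) \otimes L^{-1}\bigr) = 0 \,\,\,\,\,\, {\rm for} \,\,\,\, i \le -1 - \lcdef(X).
\end{equation*}
Since $j \ge -\lcdef(X)$ forces $i - j \le -1$, it suffices to prove this vanishing in every strictly negative degree.

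The key input is then Saito's Kodaira-Nakano vanishing in its dual form: for any mixed Hodge module $M$ on projective $X$ and ample $L$, one has
\begin{equation*}
\HH^{k}\bigl(X, \gr^F_{-p} \DR(M) \otimes L^{-1}\bigr) = 0 \,\,\,\,\,\, {\rm for} \,\,\,\, k < 0.
\end{equation*}
This is classical for pure Hodge modules, and extends to the mixed case by induction on the weight filtration: each graded piece $\gr^W_w M$ is a polarizable pure Hodge module for which the vanishing holds, and short exact sequences transfer the vanishing to $M$. Applied to each $M = \H^j(\Q_X^H[n])$, this delivers the required vanishing and completes the argument.

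The main obstacle will be careful bookkeeping between the cohomology Hodge modules of $\Q_X^H[n]$ and the hypercohomology of $\gr^F_{-p} \DR(\Q_X^H[n]) \otimes L^{-1}$ as a coherent complex, while invoking Saito vanishing cleanly in the mixed setting. Once this is assembled, the bound $p+q \le n-1-\lcdef(X)$ emerges naturally as the sum of the classical Nakano-type bound $p + q \le n - 1$ (corresponding to a pure Hodge module in degree $0$) and the extra shift $-\lcdef(X)$ forced by the presence of nonzero cohomology Hodge modules of $\Q_X^H[n]$ in negative degrees.
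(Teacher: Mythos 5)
The paper does not give its own proof of this theorem; it quotes it as \cite[Theorem 5.1]{PS24}. Your argument is nevertheless correct and is the natural way to derive the statement. Translating via $\DB_X^p \simeq \gr^F_{-p}\DR(\Q_X^H[n])[p-n]$, using (as in Theorem~\ref{thm:lcdef-top}, equivalently \eqref{eqn:lcdef-MHM}) that $\H^j(\Q_X^H[n])=0$ for $j \notin [-\lcdef(X),0]$, dévissaging along the standard $t$-structure, and then applying Saito's Kodaira--Nakano vanishing in the $L^{-1}$ form (valid for mixed Hodge modules by the weight-filtration dévissage you describe) to each cohomology Hodge module, does yield vanishing of $\HH^{i}(X, \gr^F_{-p}\DR(\Q_X^H[n]) \otimes L^{-1})$ for $i \le -1-\lcdef(X)$, which is precisely $p+q \le n-1-\lcdef(X)$. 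The one point worth stating with care is that $\gr^F_{-p}\DR(\,\cdot\,)$ is a triangulated functor from $D^b{\rm MHM}(X)$ to $\mathbf{D}^b_{\rm coh}(X)$ (a consequence of strictness), so the truncation triangles and weight-filtration exact sequences do pass to long exact sequences of hypercohomology as you assume. The worst-case shift indeed comes from the lowest nonvanishing cohomology Hodge module in degree $-\lcdef(X)$, which accounts for the defect appearing in the bound.
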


Since by generality we have $\lcdef (D) \le \lcdef (X)$, applying this vanishing theorem on both extremes of ($\ref{eqn:DB-induction-triangle}$) gives 
$$\HH^q (X, C_{X, D}^p) = 0 \,\,\,\,\,\,{\rm for ~all }\,\,\,\, p + q \le  n -1 - \lcdef (X),$$
which by the previous triangle is equivalent to the conclusion of Theorem  \ref{thm:Lefschetz-lcdef}.
\end{rmk}

\begin{rmk}[{\bf Quasi-projective case}]
Using \cite[Theorem 5.1.2]{dCM10} or \cite[Theorem 2.0.3]{deCataldo12} in place of the Artin vanishing theorem in the proof of Theorem \ref{thm:Lefschetz-lcdef}, we obtain a version for quasi-projective varieties.

\begin{thm}
\label{thm: Lefschetz hyperplane theorem for quasi-projective}
Let $X$ be an equidimensional quasi-projective variety of dimension $n$, and let $D \subset X$ be a general hyperplane section. Then the restriction
$$
H^i(X,\Q)\to H^i(D,\Q)
$$
is an isomorphism for $i\le n-2-\lcdef(X\smallsetminus D)$ and injective when $i=n-1-\lcdef(X\smallsetminus D)$. 
\end{thm}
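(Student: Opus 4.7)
[Proof proposal for Theorem \ref{thm: Lefschetz hyperplane theorem for quasi-projective}]
The plan is to follow the proof of Theorem \ref{thm:Lefschetz-lcdef} essentially verbatim, replacing the single ingredient that fails in the quasi-projective setting, namely the fact that the complement $U = X \smallsetminus D$ is affine. Set $j\colon U\hookrightarrow X$ and consider the distinguished triangle
$$
j_!\Q_U\to \Q_X\to \Q_D\xrightarrow{+1}
$$
together with its long exact sequence of hypercohomology
$$
\cdots \to H_c^i(U,\Q)\to H^i(X,\Q)\to H^i(D,\Q)\to H^{i+1}_c(U,\Q)\to\cdots
$$
So the statement reduces to showing the vanishing
$$
H^i_c(U,\Q) = 0 \quad \text{for all}\quad i\le n-1-\lcdef(U).
$$

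By Theorem \ref{thm:lcdef-top}, we have $\Q_U[n]\in {}^p\!D_c^{\ge -\lcdef(U)}(U,\Q)$, so after shifting it suffices to prove that for a general hyperplane section $D$ of a quasi-projective variety $X$, the compactly supported cohomology functor $H^*_c(U,\,\cdot\,)$ takes $^p\!D_c^{\ge 0}(U,\Q)$ into $D^{\ge 0}$. In the projective case this was the Artin vanishing theorem applied to the affine variety $U$; in the quasi-projective case $U$ is no longer affine, but the relative Artin vanishing of de Cataldo--Migliorini \cite[Theorem 5.1.2]{dCM10} (see also \cite[Theorem 2.0.3]{deCataldo12}) provides exactly the desired statement, precisely because $D$ is a \emph{general} hyperplane section. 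Concretely, the constant map $a_U\colon U \to \mathrm{pt}$ satisfies $(a_U)_!({}^p\!D_c^{\ge 0}(U,\Q))\subseteq D^{\ge 0}(\mathrm{pt})$. Applying this to $\Q_U[n][-\lcdef(U)]$ gives the required vanishing of $H^i_c(U,\Q)$ in the claimed range, and the long exact sequence above then yields both the isomorphism and the injectivity statements.

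The only substantive point in the argument is invoking the correct replacement for the Artin vanishing theorem; all other steps are formally identical to the proof of Theorem \ref{thm:Lefschetz-lcdef}. The main conceptual obstacle, then, is conceptual rather than computational: one must know that the generality of the hyperplane section is precisely what salvages the perverse right exactness of $(a_U)_!$ in the absence of affineness of $U$. Once this is granted, the proof is a routine adaptation.
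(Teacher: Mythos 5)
Your argument is correct and is exactly the route the paper intends: the paper states Theorem \ref{thm: Lefschetz hyperplane theorem for quasi-projective} inside a remark whose entire ``proof'' is the one-sentence instruction to substitute the de Cataldo--Migliorini relative Artin vanishing for the usual affine Artin vanishing in the proof of Theorem \ref{thm:Lefschetz-lcdef}, which is precisely what you carry out. One small precision point: the relevant $t$-exactness of $(a_U)_!$ is \emph{left} (it preserves ${}^p\!D_c^{\ge 0}$), not ``right'' as you write; and once $D$ is fixed, the inclusion $(a_U)_!({}^p\!D_c^{\ge 0}(U,\Q))\subseteq D^{\ge 0}(\mathrm{pt})$ holds only for complexes constructible with respect to a stratification of $X$ to which $D$ is transverse, but that is all you use, since you apply it to $\Q_U[n]$ and its perverse truncations.
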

\end{rmk}

\begin{rmk}[{\bf Lefschetz hyperplane theorem for intersection cohomology}]
We also recall, for repeated use, the fact that intersection cohomology is known to satisfy the strong version of the Lefschetz hyperplane theorem.

\begin{thm}[{\cite[Section II.6.10]{GM88}}]
\label{thm: Lefschetz hyperplane theorem for intersection cohomologies}
Let $X$ be an equidimensional (quasi-)projective variety of dimension $n$, and let $D \subset X$ be a general hyperplane section. Then the natural map of (mixed) $\Q$-Hodge structures given by restriction
$$
\mathit{IH}^i(X,\Q)\to \mathit{IH}^i(D,\Q)
$$
is an isomorphism for $i\le n-2$ and injective when $i=n-1$. 
\end{thm}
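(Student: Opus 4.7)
The plan is to mirror the proof of Theorem \ref{thm:Lefschetz-lcdef} but with the constant sheaf $\Q_X^H[n]$ replaced by the intersection complex Hodge module $\IC_X^H$. Set $U := X \smallsetminus D$ and denote by $j : U \hookrightarrow X$ the open embedding and $\iota : D \hookrightarrow X$ the closed complement. The open/closed distinguished triangle in $D^b{\rm MHM}(X)$ applied to $\IC_X^H$ reads
$$
j_! j^* \IC_X^H \to \IC_X^H \to \iota_* \iota^* \IC_X^H \xrightarrow{+1}.
$$
Since intersection complexes are determined by their restriction to the smooth locus, $j^* \IC_X^H \simeq \IC_U^H$. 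Moreover, because $D$ is a generic hyperplane section, the inclusion $\iota$ is non-characteristic for $\IC_X^H$, and Saito's theory gives $\iota^* \IC_X^H \simeq \IC_D^H[1]$ (the same input used in Lemma \ref{lem: cutting hyperplane of K}).

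Next, taking hypercohomology and shifting by $[-n]$ to translate into intersection cohomology, I obtain the long exact sequence
$$
\cdots \to \mathit{IH}^i_c(U, \Q) \to \mathit{IH}^i(X, \Q) \to \mathit{IH}^i(D, \Q) \to \mathit{IH}^{i+1}_c(U, \Q) \to \cdots
$$
Here the identifications $\HH^{i-n}(X, j_! \IC_U^H) = \mathit{IH}^i_c(U, \Q)$ and $\HH^{i-n}(X, \iota_* \IC_D^H[1]) = \HH^{(i-(n-1))}(D, \IC_D^H) = \mathit{IH}^i(D, \Q)$ use the fact that $\dim D = n-1$; the map of interest is precisely restriction. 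The desired statement reduces to the vanishing $\mathit{IH}^i_c(U, \Q) = 0$ for $i \leq n-1$, which gives an isomorphism when both $\mathit{IH}^i_c(U, \Q)$ and $\mathit{IH}^{i+1}_c(U, \Q)$ vanish (i.e.\ $i \leq n-2$) and injectivity when only $\mathit{IH}^i_c(U, \Q)$ vanishes (i.e.\ $i = n-1$).

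The vanishing itself is the generalized Artin vanishing theorem: since $\IC_U^H$ is a perverse sheaf and $U$ is affine (in the projective case), \cite[Theorem 4.1.1]{BBD} yields $\HH^k_c(U, \IC_U^H) = 0$ for all $k < 0$, exactly as in the proof of Theorem \ref{thm:Lefschetz-lcdef}. Via the shift, this translates into the vanishing of $\mathit{IH}^i_c(U,\Q)$ for $i < n$. For the quasi-projective case, the only adjustment is to replace Artin vanishing on an affine variety with the corresponding vanishing for $j_!\IC_U^H$ on a generic affine piece due to de Cataldo--Migliorini \cite[Theorem 5.1.2]{dCM10} (cf.\ \cite[Theorem 2.0.3]{deCataldo12}), exactly as in the proof of Theorem \ref{thm: Lefschetz hyperplane theorem for quasi-projective}.

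The main point requiring care is the bookkeeping of shifts, since $\IC_X^H$ sits in a shifted position reflecting $\dim X = n$ while $\iota^*\IC_X^H \simeq \IC_D^H[1]$ uses the shift by one to account for $\dim D = n-1$; it is precisely the difference between these conventions that converts the Artin vanishing range $k < 0$ into the Lefschetz range $i \le n-2$. Once this is set up correctly and non-characteristic restriction is invoked, the rest is a formal long exact sequence argument, identical in spirit to Theorem \ref{thm:Lefschetz-lcdef} but with $\lcdef(U) = 0$ automatically, since $\IC_U^H$ is already a perverse sheaf with no extra cohomological defect.
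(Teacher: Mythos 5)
Your proposal is correct and matches the route the paper itself indicates in the remark surrounding the statement: it cites the non-characteristic restriction isomorphism $\iota^*\IC_X^H\simeq\IC_D^H[1]$ and notes that the theorem follows ``precisely along the lines of the proof of Theorem~\ref{thm:Lefschetz-lcdef} and Theorem~\ref{thm: Lefschetz hyperplane theorem for quasi-projective}.'' Your write-up simply fills in the details of that sketch: the open/closed triangle $j_!\IC_U^H\to\IC_X^H\to\iota_*\IC_D^H[1]$, the shift bookkeeping identifying hypercohomology with $\mathit{IH}^\bullet_c(U)$, $\mathit{IH}^\bullet(X)$, and $\mathit{IH}^\bullet(D)$, and Artin vanishing applied to the perverse object $\IC_U^H$ with $\lcdef$ playing no role (since $\IC_U$ is perverse), which is exactly why the range improves to $i\le n-2$.
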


Stated like this, the theorem incorporates the Hodge structure on intersection cohomology, coming from the (pure) Hodge module structure on $\IC_X^H$ in Saito's work, mentioned previously. In this setting, there is an isomorphism 
\begin{equation}\label{eqn:restriction-IC}
\iota^*\IC^H_X \simeq \IC^H_D[1]
\end{equation}
when $D$ is transverse to a Whitney stratification of $X$, and Theorem \ref{thm: Lefschetz hyperplane theorem for intersection cohomologies} can be seen as a quick 
consequence of this fact, precisely along the lines of the proof of Theorem \ref{thm:Lefschetz-lcdef} and Theorem \ref{thm: Lefschetz hyperplane theorem for quasi-projective}.
\end{rmk}

\smallskip 
Theorem \ref{thm:Lefschetz-lcdef} has some interesting consequences regarding the purity of Hodge structures.

\begin{cor}\label{cor:purity}
Let $X$ be an equidimensional projective variety with $\codim~ \Sing(X) \ge k+1$. Then the Hodge structure on $H^i (X, \Q)$ is pure for $i \le k - \lcdef (X)$. Moreover, we have 
$$H^i (X, \Q) \simeq \mathit{IH}^i (X, \Q) \,\,\,\,{\rm for} \,\,\,\, i \le k -1 - \lcdef (X),$$
so in particular $H^i (X, \Q)$ is Poincar\'e dual to $H^{2n- i} (X, \Q)$ for such $i$.
\end{cor}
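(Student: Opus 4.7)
The plan is to reduce to a smooth Lefschetz neighbourhood via iterated general hyperplane sections, applying Theorem~\ref{thm:Lefschetz-lcdef} and Theorem~\ref{thm: Lefschetz hyperplane theorem for intersection cohomologies} in parallel. First, using $\codim_X \Sing(X)\ge k+1$ and Bertini applied inductively, I would choose general hyperplanes $H_1,\dots,H_{n-k}$ so that the iterated sections $X_j := X\cap H_1\cap\cdots\cap H_j$ satisfy $\dim\Sing(X_j)\le n-k-1-j$; in particular $Y := X_{n-k}$ is a smooth projective variety of dimension $k$. The key auxiliary fact is that $\lcdef$ is non-increasing under two natural operations: restriction to an open subvariety (immediate, since $\lcdef$ is local) and passage to a general hyperplane section (visible via the perverse formulation of Theorem~\ref{thm:lcdef-top} and the non-characteristic restriction property also used in Lemma~\ref{lem: cutting hyperplane of K}). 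Granted this, $\lcdef(X_{j-1}\smallsetminus X_j)\le\lcdef(X)$ at every step.

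Iterating Theorem~\ref{thm:Lefschetz-lcdef}, the restriction $H^i(X_{j-1},\Q)\to H^i(X_j,\Q)$ is an isomorphism for $i\le n-j-1-\lcdef(X)$ and injective for $i=n-j-\lcdef(X)$. The tightest bound occurs at $j=n-k$, so the composed restriction $H^i(X,\Q)\to H^i(Y,\Q)$ is an isomorphism for $i\le k-1-\lcdef(X)$ and injective for $i\le k-\lcdef(X)$. Since $Y$ is smooth projective, $H^i(Y,\Q)$ is a pure Hodge structure of weight $i$; as any sub-mixed Hodge structure of a pure Hodge structure is itself pure of the same weight, this yields the first assertion of the corollary.

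Running the parallel iteration via Theorem~\ref{thm: Lefschetz hyperplane theorem for intersection cohomologies} gives $\mathit{IH}^i(X,\Q)\simeq \mathit{IH}^i(Y,\Q)$ for $i\le k-1$. Since $Y$ is smooth we have $H^i(Y,\Q)=\mathit{IH}^i(Y,\Q)$, and the naturality of $\gamma_X$ with respect to restriction produces a commutative square
\[
\begin{array}{ccc}
H^i(X,\Q) & \longrightarrow & \mathit{IH}^i(X,\Q) \\
\downarrow & & \downarrow \\
H^i(Y,\Q) & \xrightarrow{\ \sim\ } & \mathit{IH}^i(Y,\Q)
\end{array}
\]
in which, for $i\le k-1-\lcdef(X)$, all arrows other than the top one are isomorphisms; this forces the top arrow to be an isomorphism as well. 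The Poincar\'e duality claim then follows by combining this identification with standard Poincar\'e duality for intersection cohomology, $\mathit{IH}^i(X,\Q)\simeq \mathit{IH}^{2n-i}(X,\Q)^\vee$, and identifying the target against $H^{2n-i}(X,\Q)^\vee$ via the high-degree comparison $H^{2n-i}(X,\Q)\simeq \mathit{IH}^{2n-i}(X,\Q)$, which in our range $2n-i>n+\dim\Sing(X)$ holds by purity together with Weber's theorem (Remark~\ref{rmk:Weber}).

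The hard part is the monotonicity of $\lcdef$ under general hyperplane sections, on which the whole iteration depends; without it the iteration collapses after a single step. The remaining bookkeeping is routine, modulo the minor additional input needed for the high-degree identification that underlies the Poincar\'e duality addendum.
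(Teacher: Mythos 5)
Your handling of the purity statement and of the identification $H^i(X,\Q)\simeq \mathit{IH}^i(X,\Q)$ for $i\le k-1-\lcdef(X)$ follows the paper's approach: cut down to a smooth $k$-fold $Y$ via $n-k$ general hyperplanes (possible because $\dim\Sing(X)\le n-k-1$), use the monotonicity of $\lcdef$ under open restriction and under general hyperplane sections, and run Theorem~\ref{thm:Lefschetz-lcdef} in parallel with Theorem~\ref{thm: Lefschetz hyperplane theorem for intersection cohomologies}. This part is sound and the commutative square argument you give is exactly what the paper does (if more tersely).

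The Poincar\'e duality clause, however, has a genuine gap. You justify the high-degree comparison $H^{2n-i}(X,\Q)\simeq \mathit{IH}^{2n-i}(X,\Q)$ by ``purity together with Weber's theorem.'' But Weber's theorem (Remark~\ref{rmk:Weber}) identifies the \emph{kernel} of $H^{j}(X,\Q)\to\mathit{IH}^{j}(X,\Q)$ with $W_{j-1}H^{j}(X,\Q)$; purity of $H^{j}(X,\Q)$ therefore gives \emph{injectivity} only. Surjectivity is the part you actually need, and it does not follow from those two facts. It is here that the hypothesis $\dim\Sing(X)\le n-k-1$ enters in an essential and separate way: the RHM-defect object $\K_X^\bullet$ has perverse cohomologies concentrated in nonpositive degrees (see the proof of Proposition~\ref{prop: weight of K}) and supported on $\Sing(X)$, and a perverse sheaf supported on a projective variety of dimension $s$ has hypercohomology vanishing in degrees $>s$. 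Feeding this into the perverse spectral sequence gives $\HH^{m}(X,\K_X^\bullet)=0$ for $m>n-k-1$, and then the long exact sequence attached to the triangle $\K_X^\bullet\to\Q_X^H[n]\to\IC_X^H$ yields $H^{j}(X,\Q)\simeq \mathit{IH}^{j}(X,\Q)$ for all $j\ge 2n-k$, which covers $j=2n-i$ with $i\le k-1-\lcdef(X)$. This is what the paper is invoking when it cites the dimension of the singular locus for the ``last assertion''; you should replace your appeal to purity and Weber by this vanishing argument (or at least supply the surjectivity separately).
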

\begin{proof}
Note that if $U$ is any open set in $X$, we have $\lcdef (U) \le \lcdef (X)$, and if $D$ is a general hyperplane section of $X$ we have 
$\lcdef (D) \le \lcdef (X)$. Thus by cutting with $(n- k)$ general hyperplane sections and applying the theorem repeatedly, we can reduce to the case when $D$ is smooth, 
when $H^i (D, \Q)$ is a pure Hodge structure. Moreover, in this case we have $H^i (D, \Q)\simeq \mathit{IH}^i (D, \Q)$, but we always have $\mathit{IH}^i (X, \Q)\simeq \mathit{IH}^i (D, \Q)$
by the weak Lefschetz theorem for intersection cohomology.

Note that since the singular locus of $X$ has dimension at most $n - k -1$, we also have the isomorphism $H^i (X, \Q) \simeq \mathit{IH}^i (X, \Q)$ for $i \ge 2n -k$, which
implies the last assertion as intersection cohomology satisfies Poincar\'e duality.
\end{proof}

\begin{ex}[{\bf Isolated singularities}]
When $X$ has isolated singularities we can take $k = n -1$; if moreover $\lcdef (X) = 0$, we obtain that all $H^i (X, \Q)$ carry pure Hodge structure, 
except perhaps $H^n (X, \Q)$.

According to Example \ref{ex:def=0}, this is the case for instance when $X$ is any Cohen-Macaulay threefold, or a Cohen-Macaulay fourfold which is locally 
analytically $\Q$-factorial, with isolated singularities.  

If ${\rm depth} (\O_X) \ge 3$ in any dimension, then $\lcdef (X) \le n-3$ (again by Example \ref{ex:def=0}), and therefore $H^2 (X, \Q)$ carries pure Hodge structure. (Cf. the Appendix by Srinivas to 
\cite{GW2018} for 
this statement for isolated Cohen-Macaulay singularities.)
\end{ex}

\begin{ex}[{\bf Normal and rational singularities}]\label{ex:normal-H1}
When $X$ is a projective normal variety, so that ${\rm depth} (\O_X) \ge 2$ and hence $\lcdef (X) \le n-2$, the iterative application of Theorem \ref{thm:Lefschetz-lcdef} implies the injection
$$
H^1(X,\Q)\to H^1(C,\Q)
$$
where $C$ is a general complete intersection curve in $X$. Since $C$ is smooth by the normality of $X$, this recovers the well-known fact that $H^1(X,\Q)$ is a pure Hodge structure of weight $1$ (see e.g. \cite{Saito18}). As in the previous example, when $X$ has rational singularities and $\dim X \ge 3$, 
we have $\lcdef (X) \le n- 3$, hence a similar procedure recovers the folklore fact that $H^2 (X,\Q)$ is a pure Hodge structure of weight $2$, using the well-known statement that surfaces with rational singularities are rational homology manifolds.

In the same vein, when $X$ has rational singularities we also obtain that $H^1 (X, \Q)$ and $H^{2n-1} (X, \Q)$ are Poincar\'e dual. For klt threefolds, this was shown in 
\cite[Theorem 5.13(1)]{GW2018}.
\end{ex}

\subsection{Weak Lefschetz using condition $D_m$}\label{scn:WL2}
Taking a different approach towards Lefschetz-type results, recall from Corollary \ref{cor: lcdef of k-rational} that if $U = X \smallsetminus D$ satisfies condition $D_m$, which we recall means that the canonical map 
$\DB^p_{U} \to I\DB^p_{U}$ is an isomorphism for $0\le p\le m$, then we have the upper bound 
\begin{equation}\label{eqn:lcdef-repeat}
\lcdef (U) \le   \max\left\{n-2m-3,0\right\}.
\end{equation}
Theorem \ref{thm:Lefschetz-lcdef} then says that the Lefschetz hyperplane morphism is an isomorphism for $0\le i\le \min\left\{n-2, 2m+1\right\}$ and injective for $i=\min\left\{n-1, 2m+2\right\}$.

The following more precise version of Theorem \ref{thm:Lefschetz-hyp-krat} in the Introduction gives however better bounds under this assumption, when $n \ge 2m + 3$,
using the techniques of 
 Sections \ref{scn:RHM} and \ref{sec: Hodge symmetry for singular varieties}. Note first that the restriction map $H^i(X,\Q)\to H^i(D,\Q)$ is a morphism of mixed Hodge structures, hence it also induces a restriction map 
$$H^{p,q}(X)=\gr^F_{-p}H^{p+q}(X,\C) \to H^{p,q}(D)=\gr^F_{-p}H^{p+q}(D,\C).$$

\smallskip

\begin{thm}\label{thm:Lefschetz-krat}
Let $X$ be an equidimensional projective variety of dimension $n$,  and let $D$ be an ample effective Cartier divisor on $X$. Suppose that $U = X \smallsetminus D$ satisfies condition $D_m$.  Then the restriction map
$$
H^{p,q}(X)\to H^{p,q}(D)
$$
is an isomorphism when $p+q\le n-2$ and $\min\left\{p,q\right\}\le m$. Moreover, it is  injective when $p+q\le n-1$ and  $\min\left\{p,q-1\right\}\le m$.
\end{thm}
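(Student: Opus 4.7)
The plan is to convert the claimed restriction statement into a vanishing result for the Hodge graded pieces of the compactly supported cohomology of $U$, and then establish this vanishing by combining condition $D_m$ with Artin vanishing and Hodge weight symmetry.

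First, starting from the open--closed triangle $j_!\Q^H_U\to\Q^H_X\to\iota_*\Q^H_D\xrightarrow{+1}$ associated to $U=X\smallsetminus D$, at each Hodge graded piece $\gr^F_{-p}$ one obtains a long exact sequence of $\C$-vector spaces relating $H^{p,q}(X)$, $H^{p,q}(D)$, $\gr^F_{-p}H^{p+q}_c(U,\C)$, and $\gr^F_{-p}H^{p+q+1}_c(U,\C)$. A short combinatorial check (using that $\min\{p,q\}\le m$ implies $\min\{p,q-1\}\le m$) shows that both the claimed isomorphisms and injectivities follow from the single vanishing statement
\begin{equation*}
\gr^F_{-p}H^{p+q}_c(U,\C)=0\quad\text{whenever } p+q\le n-1 \text{ and } \min\{p,q-1\}\le m.
\end{equation*}

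Second, I would reduce this to a computation on the RHM-defect object $\K^\bullet_U$ of $U$. Applying compactly supported hypercohomology to the triangle $\K^\bullet_U\to\Q^H_U[n]\to\IC^H_U\xrightarrow{+1}$ gives a long exact sequence involving $H^k_c(U,\C)$ and $\HH^{k-n}_c(U,\IC^H_U)$. Since $U$ is affine (as $D$ is ample) and $\IC^H_U$ is perverse, the generalized Artin vanishing theorem \cite[Theorem 4.1.1]{BBD} forces $\HH^{k-n}_c(U,\IC^H_U)=0$ for $k\le n-1$, yielding a canonical isomorphism of mixed Hodge structures $H^k_c(U,\C)\simeq \HH^{k-n}_c(U,\K^\bullet_U)$ in that range.

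Third, I would establish the required vanishing of $\gr^F_{-p}\HH^{k-n}_c(U,\K^\bullet_U)$ in two complementary ways. Condition $D_m$ on $U$ is equivalent to $\gr^F_{-p}\DR(\K^\bullet_U)=0$ for $0\le p\le m$; since $\gr^F_{-p}\DR$ commutes with $!$-pushforwards, this forces the vanishing of $\gr^F_{-p}\HH^{k-n}_c(U,\K^\bullet_U)$ for $p\le m$. On the other hand, Proposition \ref{prop: weight of K} gives that $\K^\bullet_U$ has weight $\le n-1$, so $\HH^{k-n}_c(U,\K^\bullet_U)$ is a mixed Hodge structure of weight $\le k-1$; Proposition \ref{prop: range of nonzero grdr}, applied to this mixed Hodge structure on a point, propagates the vanishing for $p\le m$ to the symmetric range $p\ge k-1-m$, which via $k=p+q$ becomes $q-1\le m$. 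The union of the two ranges is precisely $\min\{p,q-1\}\le m$, giving the required vanishing.

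The main technical obstacle will be the combinatorial bookkeeping in the first step: the two vanishings required for the isomorphism statement (at cohomological degrees $p+q$ and $p+q+1$) together with the single vanishing required for injectivity must be unified into one range condition on $(p,q)$ before the machinery of Sections \ref{scn:RHM} and \ref{sec: Hodge symmetry for singular varieties} can be invoked. Once this reduction is in place, the remaining inputs---Artin vanishing for $\IC^H_U$ with compact support on the affine variety $U$, the weight bound on $\K^\bullet_U$, and the Hodge symmetry expressed by Proposition \ref{prop: range of nonzero grdr}---fit together in a routine way to produce the conclusion.
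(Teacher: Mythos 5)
Your proposal matches the paper's proof almost exactly: the same long exact sequence in Hodge graded pieces coming from $j_!\Q^H_U\to\Q^H_X\to\iota_*\Q^H_D\xrightarrow{+1}$, the same reduction to $\K^\bullet_U$ via Artin vanishing for $\IC^H_U$, and the same two-pronged vanishing argument (condition $D_m$ for $p\le m$, weights and Hodge symmetry via Proposition \ref{prop: range of nonzero grdr} for $q\le m+1$), with the union of ranges giving $\min\{p,q-1\}\le m$. Your combinatorial reduction to the single vanishing $\gr^F_{-p}H^{p+q}_c(U,\C)=0$ for $p+q\le n-1$, $\min\{p,q-1\}\le m$ is correct and is exactly how the paper proceeds, though this step is routine rather than an obstacle.

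The one place where your wording is looser than it should be is the claim that ``$\gr^F_{-p}\DR$ commutes with $!$-pushforwards.'' This is not a commutation statement (Saito's compatibility of $\gr^F_p\DR$ with pushforward holds for proper $*$-pushforward, not for $!$ along a non-proper map such as $a_U$). What is actually used is a one-sided implication: if $\gr^F_{-p}\DR(\K^\bullet_U)=0$ for $p\le m$, then the same vanishing holds for ${a_U}_!\K^\bullet_U$. The paper derives this by dualizing, applying a compatibility of $\gr^F\DR$ with $*$-pushforward to a point (\cite[Lemma 3.4]{Park23}), and dualizing back (\cite[Lemma 3.2]{Park23}). So the conclusion you need is correct, but the mechanism is duality rather than commutation; you would want to make that explicit (or cite the relevant lemmas) when writing the argument up.
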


For example, while Theorem  \ref{thm:Lefschetz-lcdef} and Theorem \ref{thm:Lefschetz-krat} are equivalent for threefolds with $m=0$, if $X$ is a fourfold and $m = 0$ (so for instance a fourfold with rational singularities), then in addition the spaces $H^{0,3} (X)$, $H^{2,1} (X)$ and $H^{3, 0} (X)$ satisfy weak Lefschetz. More and more spaces like this appear as the dimension goes up.

\begin{proof}
As in the proof of Theorem  \ref{thm:Lefschetz-lcdef}, we consider the long exact sequence of  mixed $\Q$-Hodge structures
$$
\cdots \to H_c^i(U,\Q)\to H^i(X,\Q)\to H^i(D,\Q)\to \cdots
$$
Denote $H^{p,q}_c(U):=\gr^F_{-p}H^{p+q}_c(U,\C)$. We then have the corresponding long exact sequence
$$
\cdots \to H_c^{p,q}(U)\to H^{p,q}(X)\to H^{p,q}(D)\to H_c^{p,q+1}(U)\to \cdots 
$$
Therefore, it suffices to prove the following claim.

\noindent
{\em Claim.} When $p+q\le n-1$, we have $H^{p,q}_c(U)=0$ for $p\le m$ or $q\le m +1$.

Recall that by Saito \cite[Corollary 4.3]{Saito00}, the classical mixed Hodge structure of $H_c^i(U,\Q)$ is the same as the mixed Hodge structure obtained from the theory of mixed Hodge modules. We consider the distinguished triangle \eqref{eqn: Q to IC triangle}  for $U$, shifted by $[-n]$:
$$
\K^\bullet_U[-n]\to\Q^H_U\to\IC^H_U[-n]\xrightarrow{+1}.
$$
Denote by $a_U:U\to \mathrm{pt}$ the constant map to a point. Since $a_U$ is an affine morphism and $\IC_U[-n]\in {}^p\!D^{\ge n}_c(X,\Q)$, we have 
$$H^i({a_U}_!\IC^H_U[-n])=0 \,\,\,\, {\rm for} \,\,\,\,i\le n-1$$ 
by the Artin vanishing theorem \cite[Theorem 4.1.1]{BBD}. This implies the isomorphisms of mixed $\Q$-Hodge structures
$$
H^i({a_U}_!\K^\bullet_U[-n])\isom H^i({a_U}_!\Q^H_U)\isom H^i_c(U,\Q)
$$
for $i\le n-1$.

The assumption $D_m$ for $U$ is equivalent to $\gr^F_{-p}\DR(\K^\bullet_U)=0$ for all $p\le m$. The dual statement of \cite[Lemma 3.4]{Park23} implies that
$$
\gr^F_{-p}\DR({a_U}_!\K^\bullet_U)=0
$$
for all $p\le m$; see also \cite[Lemma 3.2]{Park23} for the behavior the graded de Rham functor $\gr^F_p\DR(\, \cdot \,)$ upon dualizing. Recall now that the category of mixed Hodge modules on the point $\mathrm{pt}$ is equivalent to the category of mixed $\Q$-Hodge structures, so the functor $\gr^F_{-p}\DR(\, \cdot \,)$ is equivalent to the classical graded functor $\gr^F_{-p}(\, \cdot \,)$. Using the strictness of Hodge filtration (see e.g. \cite[Lemme 5]{Saito88}), we deduce that
$$
\gr^F_{-p}H^{p+q}({a_U}_!\K^\bullet_U[-n])=0
$$
for all $p\le m$.

On the other hand, by Proposition \ref{prop: weight of K}, $\K^\bullet_U[-n]$ is of weight $\le -1$. Consequently by \cite[4.5.2]{Saito90}, ${a_U}_!\K^\bullet_U[-n]$ is of weight 
$\le -1$, and so $H^{p+q}({a_U}_!\K^\bullet_U[-n])$ is of weight $\le p+q-1$. Therefore, by Proposition \ref{prop: range of nonzero grdr} (or its mixed $\Q$-Hodge structure version), we have
$$
\gr^F_{-p}H^{p+q}({a_U}_!\K^\bullet_U[-n])=0
$$
for all $p\ge p+q-1-m$, or equivalently $q \le m +1$.

In conclusion, when $p+q\le n-1$, we have $\gr^F_{-p}H^{p+q}_c(U,\C)=0$ for $p\le m$ or $q\le m+1$. This completes the proof.
\end{proof}

\begin{rmk}[{{\bf $m = -1, 0$}}]\label{rmk:m=-1}
Theorem \ref{thm:Lefschetz-krat} contains interesting information even for arbitrary varieties, or for those with rational singularities. For an arbitrary variety $X$, we can set 
$m=-1$. In this case, it says that the restriction
$$
H^{p,0}(X)\to H^{p,0}(D)
$$
is injective for $p\le n-1$. Equivalently, in the language of \cite{HJ14},  the restriction map of global sections of $h$-differentials
$$
H^0(X,\Omega_{X,h}^p)\to H^0(D,\Omega_{D,h}^p)
$$
is injective for $p\le n-1$. Furthermore, if $X\sm D$ has rational singularities, this restriction map is an isomorphism for $p\le n-2$, and for $ q \le n-2$ so is the restriction map 
$$
H^{0,q}(X)\to H^{0,q}(D).
$$
\end{rmk}

\section{Rational homology manifolds}
This chapter is devoted to one of the main applications of the techniques in this paper, namely the analytic characterization of the rational homology manifold condition, the ``obvious" situation in which one has full symmetry of the Hodge-Du Bois diamond. This also relies on results in the subsequent Appendix, where we address the closely related question of characterizing $\lcdef (X)$ as well, hence the range of applicability of results like the weak Lefschetz Theorem \ref{thm:Lefschetz-lcdef}.

\subsection{Rational homology manifolds and symmetry}
Recall that variety $X$ of dimension $n$ is a rational homology manifold iff the natural morphism $\Q_X^H [n] \to \IC_X^H$ is an isomorphism. In particular, $\Q_X[n]$ is a perverse sheaf, and therefore by Theorem \ref{thm:lcdef-top} we have that $\lcdef (X) = 0$. Moreover, when $X$ is projective, it is clear that $X$ has full symmetry of the Hodge-Du Bois diamond, as singular cohomology coincides with  intersection cohomology. 

In preparation for the next section (devoted to characterizing rational homology manifolds), we show that when $X$ has isolated singularities, or more generally when the rational homology manifold condition is known to hold away from a finite set, we have the following converse:

\begin{thm}\label{thm:RHM-symmetry}
Let $X$ be a projective variety, which is a rational homology manifold away from a finite set of points (e.g. with isolated singularities). Then $X$ is a rational homology manifold if and only if the Hodge-Du Bois diamond of $X$ has full symmetry.
\end{thm}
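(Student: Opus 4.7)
The forward direction is immediate and I would dispatch it first: if $X$ is a rational homology manifold, then $\gamma_X\colon \Q_X^H[n]\to \IC_X^H$ is an isomorphism, hence $H^i(X,\Q)\simeq \mathit{IH}^i(X,\Q)$ as pure Hodge structures, which automatically enjoys full Hodge symmetry.

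For the converse, suppose the diamond has full symmetry and that the RHM-defect object $\K_X^\bullet$ is supported on a finite set $S$. The plan is to show $\K_X^\bullet=0$. Since $\Supp \K_X^\bullet$ is zero-dimensional, pushforward to a point reduces the computation of hypercohomology to stalks, giving $\HH^i(X,\K_X^\bullet)=\bigoplus_{x\in S}\H^i(\K_X^\bullet)_x$ as mixed Hodge structures. As $\Q_X^H[n]\in D^{\le 0}\mathrm{MHM}(X)$ and $\IC_X^H$ lives in degree $0$, we have $\K_X^\bullet\in D^{\le 0}\mathrm{MHM}(X)$, so these stalk cohomologies vanish for $i>0$. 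Thus $\K_X^\bullet=0$ if and only if $\HH^i(X,\K_X^\bullet)=0$ for every $i$, and this is what I would aim to establish.

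To achieve this, I would invoke the long exact sequence coming from the triangle \eqref{eqn: Q to IC triangle} shifted by $[-n]$,
\[
\cdots\to\HH^i(X,\K_X^\bullet[-n])\to H^i(X,\Q)\to\mathit{IH}^i(X,\Q)\to\HH^{i+1}(X,\K_X^\bullet[-n])\to\cdots,
\]
fueled by two inputs. Full Hodge symmetry, via Lemma~\ref{lem:FL}, forces $H^i(X,\Q)$ to be pure of weight $i$ for every $i$; and Proposition~\ref{prop: weight of K} shows that $\K_X^\bullet[-n]$ has weight $\le -1$, so $\HH^i(X,\K_X^\bullet[-n])$ is a mixed Hodge structure of weight $\le i-1$. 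A morphism of MHS from weight $\le i-1$ into pure weight $i$ must vanish by strictness (equivalently, by Weber's theorem in Remark~\ref{rmk:Weber} the kernel of $H^i\to \mathit{IH}^i$ equals $W_{i-1}H^i=0$), hence $H^i(X,\Q)\hookrightarrow \mathit{IH}^i(X,\Q)$ is injective for all $i$.

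Finally, I would upgrade injectivity to an isomorphism by a dimension count. For $i\ge n$, the term $\HH^{i+1}(X,\K_X^\bullet[-n])=\HH^{i+1-n}(X,\K_X^\bullet)$ vanishes by the second paragraph, so the long exact sequence yields surjectivity and thus $H^i\simeq \mathit{IH}^i$. For $i<n$, I would compare $\dim H^i=\dim H^{2n-i}$ (from the Hodge symmetry $\underline{h}^{p,q}=\underline{h}^{n-p,n-q}$) with $\dim \mathit{IH}^i=\dim \mathit{IH}^{2n-i}$ (Poincar\'e duality for intersection cohomology); since $2n-i>n$ the case already settled gives $\dim H^{2n-i}=\dim \mathit{IH}^{2n-i}$, hence $\dim H^i=\dim \mathit{IH}^i$, and together with injectivity this yields $H^i\simeq \mathit{IH}^i$. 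Hence $\HH^\ast(X,\K_X^\bullet[-n])=0$ for all degrees, and the second paragraph concludes $\K_X^\bullet=0$, i.e., $X$ is a rational homology manifold. The only substantive ingredient is the weight bound of Proposition~\ref{prop: weight of K}, which drives the entire argument; the remainder is careful bookkeeping with indices and Poincar\'e duality, and I do not anticipate any real obstacle.
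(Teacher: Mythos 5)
Your proof is correct and follows essentially the same path as the paper's: reduce the claim $\K_X^\bullet=0$ to the vanishing of all hypercohomology (using that it is supported at finitely many points and lives in nonpositive degrees), then use the weight bound of Proposition~\ref{prop: weight of K} together with purity of $H^i(X,\Q)$ (forced by full symmetry via Lemma~\ref{lem:FL}) to get injectivity $H^i\hookrightarrow\mathit{IH}^i$ everywhere, and finally use the symmetry $\h^{p,q}=\h^{n-p,n-q}$ plus Poincar\'e duality for intersection cohomology for the dimension count that upgrades this to an isomorphism. The only cosmetic difference is that you derive surjectivity for $i\ge n$ from the explicit vanishing of $\HH^{i+1-n}(\K_X^\bullet)$, while the paper records only the case $i>n$ together with the surjection at $i=n$; both versions close the argument identically.
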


\begin{proof} 
We only need to show the ``if" implication, so we assume that we have full symmetry. Since $X$ is a rational homology manifold away from a finite set of points, the RHM-defect object $\K^\bullet_X$ is supported on this finite set. In particular, its cohomology $\H^i(\K^\bullet_X)$ is a direct sum of finitely many mixed Hodge modules associated to mixed Hodge structures $V^H_x$ for each point $x\in X$. Its hypercohomology $\HH^0(\H^i(\K^\bullet_X))$ is a direct sum of $V_x^H$, and vanishes otherwise: $\HH^{\neq0}(\H^i(\K^\bullet_X))=0$. This implies that $\K^\bullet_X$ is zero if and only if its every hypercohomology vanishes, which we now aim to prove.

To begin with, we have $\HH^i(\K_X^\bullet)=0$ for $i\ge 1$, since $\H^{i}(\K^\bullet_X)=0$ for $i\ge 1$. From the long exact sequence of hypercohomology associated to the triangle \eqref{eqn: Q to IC triangle}, we obtain
$$H^i(X,\Q) \simeq \mathit{IH}^i(X,\Q)\,\,\,\,\,\,{\rm  for ~all} \,\,\,\, i > n$$
and the natural surjection
$$
H^n(X,\Q) \twoheadrightarrow\mathit{IH}^n(X,\Q).
$$
Additionally, from the symmetry hypothesis, $H^i(X,\Q)$ is a pure Hodge structure of weight $i$ for all $i$; see e.g. Lemma \ref{lem:FL}. This implies the natural inclusion
$$
H^i(X,\Q)\hookrightarrow \mathit{IH}^i(X,\Q)
$$
for all $i$, by Weber's theorem (see Remark \ref{rmk:Weber}). Again from the symmetry hypothesis, we have $h^i(X)=h^{2n-i}(X)=Ih^{2n-i}(X)=Ih^{i}(X)$ when $i<n$. We thus have natural isomorphisms
$$H^i(X,\Q) \simeq \mathit{IH}^i(X,\Q)\,\,\,\,\,\,{\rm  for ~all} \,\,\,\, i.$$
Therefore, the hypercohomology  $\HH^i(\K_X^\bullet)$ vanishes for all $i$, which completes the proof.
\end{proof}

\subsection{Criteria for surfaces}
Theorem \ref{thm:RHM-symmetry} applies in particular to normal surfaces. This section serves as a warm-up, where in addition we characterize the RHM condition among normal Du Bois surfaces. We will then proceed to characterizing this condition among varieties with rational singularities in dimension three and higher.

%\noindent
%{\bf Normal projective surfaces.}
It is well known that surfaces with rational singularities are rational homology manifolds. Here we prove the following more precise result.

\begin{thm}\label{thm:RHM-surfaces}
Let $X$ be a normal projective surface. Then the following are equivalent:

\noindent
(i) $X$ is a rational homology manifold.

\noindent 
(ii) The Hodge-Du Bois diamond of $X$ satisfies full symmetry, i.e.
$$
\underline h^{p,q}(X)=\underline h^{q,p}(X)=\underline h^{2-p,2-q}(X)=\underline h^{2-q,2-p}(X)
$$
for all $0\le p,q\le 2$.

If in addition $X$ has Du Bois singularities, then they are also equivalent to:

\noindent
(iii) $X$ has rational singularities.\footnote{The equivalence between (i) and (iii) holds of course even when $X$ is not projective, for instance by passing to a compactification which is smooth along the boundary, and applying the theorem.}
\end{thm}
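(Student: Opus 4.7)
The plan is to establish $(i)\Leftrightarrow(ii)$ for an arbitrary normal projective surface, and then the additional equivalence with $(iii)$ under the Du Bois hypothesis. The direction $(i)\Rightarrow(ii)$ is immediate: if $X$ is a rational homology manifold, then $\Q_X^H[n]\simeq\IC_X^H$, which yields $H^\bullet(X,\Q)\simeq \mathit{IH}^\bullet(X,\Q)$ as mixed Hodge structures, and the full symmetry of the intersection Hodge diamond transfers directly to the Hodge-Du Bois diamond of $X$. For $(ii)\Rightarrow(i)$, note that a normal surface has at worst isolated singular points and is therefore automatically a rational homology manifold on the complement of a finite set of points, so Theorem \ref{thm:RHM-symmetry} applies.

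Assuming additionally that $X$ is Du Bois, the implication $(iii)\Rightarrow(i)$ is the short direction. Rational singularities give $\O_X\simeq Rf_*\O_{\widetilde X}$ for any resolution $f:\widetilde X\to X$, and Lemma \ref{prop: 0th cohomology of IDB} exhibits $I\DB_X^0$ as a direct summand of $Rf_*\O_{\widetilde X}\simeq \O_X$ with $\H^0 I\DB_X^0=\O_X$; hence $I\DB_X^0\simeq \O_X\simeq \DB_X^0$, i.e.\ $X$ satisfies condition $D_0$. Corollary \ref{cor: lcdef of k-rational} then asserts that $X$ is a rational homology manifold in codimension at least $3$, a vacuous condition in dimension $2$, so $X$ is itself a rational homology manifold (recovering the classical result of Mumford-Artin).

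The main content is $(i)\Rightarrow(iii)$. Combining $\O_X\simeq \DB_X^0$ (Du Bois) with $\DB_X^0\simeq I\DB_X^0$ (the $p=0$ case of the rational homology manifold condition) yields $\O_X\simeq I\DB_X^0$. By Lemma \ref{prop: 0th cohomology of IDB}, $I\DB_X^0$ is a direct summand of $Rf_*\O_{\widetilde X}$, inheriting this structure from the appearance of $\IC_X^H$ as a summand of $Rf_*\Q_{\widetilde X}^H[n]$ via the Decomposition Theorem. Unwinding the definition of $\gamma_X:\Q_X^H[n]\to \IC_X^H$ as the composition of the natural adjunction map $\Q_X^H[n]\to Rf_*\Q_{\widetilde X}^H[n]$ with projection onto the top-weight summand, and then applying $\gr^F_0\DR$, shows that the natural map $\O_X\to Rf_*\O_{\widetilde X}$ factors as $\O_X\simeq I\DB_X^0\hookrightarrow Rf_*\O_{\widetilde X}$ and therefore admits a left inverse in the derived category. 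By Kov\'acs' splitting criterion, $X$ has rational singularities.

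The main subtlety to verify is the last compatibility: that the inclusion $I\DB_X^0\hookrightarrow Rf_*\O_{\widetilde X}$ provided by Lemma \ref{prop: 0th cohomology of IDB} does coincide with the image of $\O_X\to Rf_*\O_{\widetilde X}$ under the identification $\O_X\simeq I\DB_X^0$, so that Kov\'acs' criterion applies cleanly. This is essentially a functoriality check in the derived category of mixed Hodge modules, but it is the one place where care is needed.
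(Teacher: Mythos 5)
Your overall structure matches the paper's for the equivalence $(i)\Leftrightarrow(ii)$: both of you invoke Theorem~\ref{thm:RHM-symmetry} after observing that normal surfaces have isolated singularities, and the forward direction is immediate. Your $(iii)\Rightarrow(i)$ via condition $D_0$ and Corollary~\ref{cor: lcdef of k-rational} is also the paper's route (the paper phrases it as "also follows from Corollary~\ref{cor: lcdef of k-rational}").

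Where you genuinely diverge is in closing the loop to $(iii)$. The paper proves $(ii)\Rightarrow(iii)$ by an elementary Leray spectral sequence argument: under the Du Bois hypothesis $\DB_X^0\simeq\O_X$, and since $\DB_X^2\simeq Rf_*\omega_{\widetilde X}$, the boundary symmetry $\h^{0,i}(X)=\h^{2,2-i}(X)$ translates via Serre duality into the numerical equalities $h^i(X,\O_X)=h^i(\widetilde X,\O_{\widetilde X})$, at which point Lemma~\ref{lem:vanishing-R1}(1) forces $R^1f_*\O_{\widetilde X}=0$. You instead prove $(i)\Rightarrow(iii)$ using Kov\'acs' splitting criterion together with the intersection Du Bois complex machinery. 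Both are valid; the paper's is more elementary, while yours is more structural and essentially reproves the known fact that Du~Bois together with condition $D_0$ equals rational (as stated in Remark~\ref{rmk:terminology}).

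On the "subtlety" you flag at the end: it is a real point, and your phrasing ("the natural map \emph{factors} through $I\DB_X^0$") is slightly too strong — what one actually gets by applying $\gr^F_0\DR$ to $\gamma_X = (\text{projection})\circ(\text{adjunction})$ is that the composite $\O_X\xrightarrow{\text{nat.}}Rf_*\O_{\widetilde X}\xrightarrow{\text{proj.}}I\DB_X^0\simeq\O_X$ equals $\gamma_0$, not that the adjunction map itself lands in the $I\DB_X^0$ summand. However this is enough, and in fact you can sidestep the functoriality check entirely: any composition $\O_X\to Rf_*\O_{\widetilde X}\to I\DB_X^0\simeq\O_X$ built from the natural map and any splitting is an endomorphism of $\O_X$, i.e.\ multiplication by a regular function, which is a unit over the smooth locus; since $X$ is normal, a regular function nonvanishing in codimension one is a unit, so the composite is an isomorphism and the natural map admits a left inverse. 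With that observation filled in, your argument is complete.
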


The first equivalence in Theorem \ref{thm:RHM-surfaces} is a special case of Theorem \ref{thm:RHM-symmetry}; indeed, since $X$ is normal, the singularities are isolated.

As mentioned earlier, surfaces with rational singularities are rational homology manifolds; this also follows from Corollary \ref{cor: lcdef of k-rational}. Assume now that $X$ is Du Bois and has full symmetry of the Hodge diamond. The fact that $X$ must have rational singularities follows from the Lemma below, a simple application of the Leray spectral sequence. Note that when $X$ is Du Bois, the identities in the hypothesis of the Lemma are equivalent to the symmetry
$$\h^{0, i} (X) = \h^{n, n-i} (X) \,\,\,\,\,\,{\rm for ~all} \,\,\,\,i \le 3,$$
since $\DB_X^n \simeq R f_* \omega_{\widetilde X}$ (always) and $\DB_X^0 \simeq \O_X$.

\begin{lem}\label{lem:vanishing-R1}
Let $X$ be a normal projective variety with isolated singularities, and let $f \colon \widetilde{X} \to X$ be a resolution of singularities such that $h^i (X, \O_X) = h^i (\widetilde{X}, \O_{\widetilde{X}})$ for all $i\le 3$. Then:
\begin{enumerate}
\item If $X$ is a surface, then $X$ has rational singularities, i.e. $R^1 f_* \O_{\widetilde{X}} = 0$.
\item If $X$ is a threefold with $H^2 (X, \O_X) = 0$, then $X$ has rational singularities, i.e. $R^1 f_* \O_{\widetilde{X}} = R^2 f_* \O_{\widetilde{X}} = 0$.
\end{enumerate}
\end{lem}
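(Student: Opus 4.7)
The plan is to analyze the Leray spectral sequence
\[
E_2^{p,q} = H^p(X, R^q f_* \O_{\widetilde X}) \Longrightarrow H^{p+q}(\widetilde X, \O_{\widetilde X}).
\]
The crucial simplification, coming from the isolated-singularities assumption, is that $R^q f_* \O_{\widetilde X}$ is a skyscraper sheaf supported at $\Sing(X)$ for every $q \ge 1$, so $E_2^{p,q} = 0$ whenever $p \ge 1$ and $q \ge 1$. Thus the $E_2$-page has at most two non-trivial columns, $p = 0$ (where the cohomology of $X$ sits) and $q = 0$ (where the cohomology of the higher direct images sits), which makes all differentials easy to identify.

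For part (1), the fibers of $f$ are at most one-dimensional, so $R^q f_* \O_{\widetilde X} = 0$ for $q \ge 2$. The spectral sequence collapses at $E_3$ and reduces to the five-term exact sequence
\[
0 \to H^1(X, \O_X) \to H^1(\widetilde X, \O_{\widetilde X}) \to H^0(X, R^1 f_* \O_{\widetilde X}) \xrightarrow{d_2^{0,1}} H^2(X, \O_X) \to H^2(\widetilde X, \O_{\widetilde X}) \to 0.
\]
The equality $h^2(X,\O_X) = h^2(\widetilde X, \O_{\widetilde X})$ forces $d_2^{0,1} = 0$, and then the equality $h^1(X,\O_X) = h^1(\widetilde X, \O_{\widetilde X})$ forces $H^0(X, R^1 f_* \O_{\widetilde X}) = 0$, hence $R^1 f_* \O_{\widetilde X} = 0$.

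For part (2), the fibers have dimension at most two, so $R^q f_* \O_{\widetilde X} = 0$ for $q \ge 3$. The only two potentially nonzero higher direct images are $R^1 f_* \O_{\widetilde X}$ and $R^2 f_* \O_{\widetilde X}$, and the only potentially nonzero differentials are $d_2^{0,1}\colon E_2^{0,1} \to E_2^{2,0}$, $d_2^{0,2}\colon E_2^{0,2} \to E_2^{2,1}=0$, and $d_3^{0,2}\colon E_3^{0,2} \to E_3^{3,0}$. First, the hypothesis $H^2(X,\O_X)=0$ kills $d_2^{0,1}$, so the filtration on $H^1(\widetilde X, \O_{\widetilde X})$ yields
\[
h^1(\widetilde X, \O_{\widetilde X}) = h^1(X, \O_X) + h^0(X, R^1 f_* \O_{\widetilde X}),
\]
and the hypothesis $h^1(X,\O_X) = h^1(\widetilde X,\O_{\widetilde X})$ gives $R^1 f_* \O_{\widetilde X} = 0$. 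To rule out $R^2 f_* \O_{\widetilde X}$, I would track the remaining differential $d_3^{0,2}\colon H^0(X, R^2 f_* \O_{\widetilde X}) \to H^3(X, \O_X)$: the equality $h^2(\widetilde X, \O_{\widetilde X}) = h^2(X,\O_X) = 0$ shows that $\ker d_3^{0,2} = 0$, while the equality $h^3(\widetilde X, \O_{\widetilde X}) = h^3(X, \O_X)$ combined with $E_\infty^{3,0} = H^3(X, \O_X)/\operatorname{im} d_3^{0,2}$ shows that $d_3^{0,2} = 0$. Together these give $H^0(X, R^2 f_* \O_{\widetilde X}) = 0$, so $R^2 f_* \O_{\widetilde X} = 0$.

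There is no serious obstacle here; the argument is a careful bookkeeping of the two-column spectral sequence, and the only subtle point is that in the threefold case one cannot avoid using the extra assumption $H^2(X,\O_X) = 0$ — without it, cancellation between $R^1 f_* \O_{\widetilde X}$ and $H^2(X, \O_X)$ via $d_2^{0,1}$ could obstruct the conclusion.
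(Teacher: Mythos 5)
Your proof is correct and takes essentially the same approach as the paper: both analyze the Leray spectral sequence for $f$, use the isolated-singularities hypothesis to kill $E_2^{p,q}$ for $p,q \ge 1$, and then read off the vanishing of $R^1 f_*\O_{\widetilde X}$ (and, for threefolds, $R^2 f_*\O_{\widetilde X}$) from the dimension equalities by chasing the two surviving differentials $d_2^{0,1}$ and $d_3^{0,2}$ (which the paper calls $\varphi$ and handles implicitly via $E_\infty^{0,2}$). The only cosmetic difference is that you phrase part (1) via the five-term exact sequence with the trailing surjection made explicit, which is justified precisely by the vanishing of $E_\infty^{1,1}$ and $E_\infty^{0,2}$ that you note.
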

\begin{proof} 
In the Leray spectral sequence converging to $H^1 (\widetilde{X}, \O_{\widetilde{X}})$ we have two terms to consider. First, it is immediate 
that 
$$H^1 (X, \O_X) = E^{1, 0}_2 = E^{1, 0}_\infty.$$
On the other hand, 
$$E^{0, 1}_\infty = E^{0, 1}_3 = \ker \big(E^{0,1}_2 \to E^{2,0}_2\big) = \ker \big(\varphi\colon H^0 (X, R^1 f_*\O_{\widetilde{X}})
\to H^2 (X, \O_X )\big).$$
Therefore we have 
$$h^{0,1} (\widetilde{X}) = h^1 (X, \O_X) + \dim \ker (\varphi),$$
which together with the hypothesis implies $\ker (\varphi)= 0$.

In the spectral sequence converging to $H^2 (\widetilde{X}, \O_{\widetilde{X}})$ we have three terms to consider. 
First, 
$$E^{2, 0}_\infty = E^{2, 0}_3 = {\rm coker} \big(E^{0,1}_2 \to E^{2,0}_2\big) = {\rm coker}\big(\varphi\colon H^0 (X, R^1 f_*\O_{\widetilde{X}})
\to H^2 (X, \O_X )\big).$$
Next we have $E^{1,1}_2 = 0$ since the singularities are isolated.

When $X$ is a surface, we also have $E^{0,2}_2 =0$ for dimension reasons. This gives
$$h^2 (X, \O_X) = h^2 (\widetilde{X}, \O_{\widetilde{X}})  = \dim {\rm coker}(\varphi) \le h^2 (X, \O_X),$$
from which we deduce that $\varphi \equiv 0$. Combined with the fact that $\ker (\varphi)= 0$, we deduce that $R^1 f_*\O_{\widetilde{X}} =0$, which proves (1).

When $X$ is a threefold, a straightforward argument in the same vein shows that 
$$E^{0,2}_{\infty} \simeq H^0 (X, R^2 f_* \O_{\widetilde X}),$$
due to the equality $h^3 (X, \O_X) = h^3 (\widetilde{X}, \O_{\widetilde{X}})$. Thus we have 
$$h^2 (\widetilde{X}, \O_{\widetilde{X}})  = \dim {\rm coker} (\varphi) + h^0 (X, R^2 f_* \O_{\widetilde X}).$$
 But the extra assumption $H^2 (X, \O_X)=0$ implies that $\varphi \equiv 0$, hence $R^1 f_* \O_{\widetilde X} =0$ since we also know that ${\rm ker}(\varphi) = 0$.
It also gives that ${\rm coker} (\varphi) = 0$, so finally our assumption implies that $R^2 f_* \O_{\widetilde X} = 0$ as well.
\end{proof}

\subsection{Local Picard groups vs. local analytic $\Q$-factoriality}
This section is an interlude, where we relate local analytic $\Q$-factoriality to the objects considered in this paper. This serves as preparation 
for the statements in the subsequent sections. It is also useful for some of the arguments in the companion paper \cite{PP25}. 

For a closed point $x \in X$, we consider the local analytic divisor class group $\mathrm{Cl}(\O^{\rm an}_{X,x})$. We 
express this invariant in terms of the intersection complex of $X$, using a result of Flenner.

\begin{prop}
\label{prop: local defect of Q-factoriality}
Let $X$ be a variety of dimension $n$, with rational singularities at $x\in X$. Then, there exists an isomorphism
$$
\mathrm{Cl}(\O^{\rm an}_{X,x})\tensor_\Z \Q\isom\H^{-n+2}(\iota_x^*\IC_X),
$$
where $\mathrm{Cl}(\O^{\rm an}_{X,x})$ is the divisor class group of the analytic local ring $\O^{\rm an}_{X,x}$ and $\iota_x:x\to X$ is the closed embedding.
\end{prop}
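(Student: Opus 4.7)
The plan is to identify both sides of the proposed isomorphism with the second cohomology of the link $L_x$ of $x$ in $X$, and to conclude using Flenner's theorem. Let $L_x$ denote this link, so that a small punctured analytic neighborhood $U^{\rm an}$ of $x$ deformation retracts onto $L_x$.

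First, I would compute the right-hand side via the Deligne--Goresky--MacPherson truncation axioms characterizing the intersection cohomology complex. For a small cone-like neighborhood of $x$, the cone formula for intersection cohomology gives the standard stalk identification
$$
\H^i(\iota_x^*\IC_X) \isom \mathit{IH}^{i+n}(L_x, \Q) \quad \text{for } -n \le i \le -1,
$$
so specializing to $i = -n+2$ yields $\H^{-n+2}(\iota_x^*\IC_X) \isom \mathit{IH}^2(L_x, \Q)$; the cases $n \le 2$ are trivial since both sides vanish (rational surface singularities have finite local class group). Moreover, since $X$ is normal, $\codim_X \Sing(X) \ge 2$, hence the singular stratum of $L_x$ has real codimension $\ge 3$ in $L_x$. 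The support truncation axioms for $\IC_{L_x}$ then force the natural comparison $H^2(L_x, \Q) \to \mathit{IH}^2(L_x, \Q)$ to be an isomorphism.

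Next, I would invoke Flenner's theorem \cite{Flenner81}. For a normal analytic germ $(X, x)$ with rational singularities, Flenner established a natural isomorphism
$$
\mathrm{Cl}(\O^{\rm an}_{X,x}) \tensor_\Z \Q \isom H^2(U^{\rm an}, \Q),
$$
derived from the exponential exact sequence on $U^{\rm an}$, the rationalization vanishing of the relevant $H^j(U^{\rm an}, \O_{U^{\rm an}})$ afforded by the rational singularity hypothesis, and the standard codimension $\ge 2$ comparison between $\mathrm{Pic}(U^{\rm an})$ and $\mathrm{Cl}(\O^{\rm an}_{X,x})$. Since $U^{\rm an}$ deformation retracts onto $L_x$, the right-hand side equals $H^2(L_x, \Q)$, and chaining this with the identifications of the previous paragraph produces the stated isomorphism.

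The hard part is the careful bookkeeping in Flenner's theorem when $X$ has non-isolated singularities at $x$, and checking that the isomorphisms constructed are natural enough to be useful later on. Since rational singularities is an open condition, one may shrink around $x$ and assume that $X$ has rational singularities on a full neighborhood, after which Flenner's arguments apply uniformly and the topological identification with $H^2(L_x, \Q)$ is unambiguous.
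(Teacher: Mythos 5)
Your overall strategy is in the right direction: invoking Flenner's theorem on one side and a cone-formula reading of the stalk of $\IC_X$ on the other is also how the paper proceeds, and the identification $\H^{-n+2}(\iota_x^*\IC_X)\simeq \mathit{IH}^2(L_x,\Q)$ is correct once $x$ is taken in a zero-dimensional stratum. However, the proof has two real gaps, both arising from the non-isolated case, which the Proposition does not exclude. First, you misquote Flenner: his isomorphism is $\mathrm{Cl}(\O^{\rm an}_{X,x})\isom\varinjlim_U H^2\bigl(U\smallsetminus\Sing(X),\Z\bigr)$, the cohomology of the complement of the \emph{full singular locus}, not of the punctured germ $U\smallsetminus\{x\}$. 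These agree only when $\Sing(X)=\{x\}$ near $x$. Consequently, the quantity you must compare with intersection cohomology of the link is $H^2\bigl(L_x\smallsetminus\Sing(L_x),\Q\bigr)$, not $H^2(L_x,\Q)$, and your two reductions do not actually meet in the middle.

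Second, even granting the substitution, the claim that ``the support truncation axioms for $\IC_{L_x}$ force $H^2(L_x,\Q)\to\mathit{IH}^2(L_x,\Q)$ to be an isomorphism'' because the singular stratum of $L_x$ has real codimension $\ge 3$ (it is in fact $\ge 4$ from normality) is not justified. With lower middle perversity $\bar m(4)=1$, the cone of $\Q_{L_x}[2n-1]\to\IC_{L_x}$ can have a nonzero cohomology sheaf in degree $-(2n-1)+1$ along a codimension-$4$ stratum, and its $H^1$ contributes precisely to the hypercohomology obstructing the isomorphism in degree two; normality alone does not kill it. What the paper actually uses at this point is Corollary~\ref{cor: lcdef of k-rational}(i) with $m=0$: a variety with rational singularities is a rational homology manifold away from a closed subset $Z$ of codimension $\ge 3$. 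This is one of the genuinely new technical results of the paper (resting on the analysis of the RHM-defect object $\K_X^\bullet$ and its weight bound, Proposition~\ref{prop: weight of K}), and it supplies the subset $Z$ with $\dim Z\le n-3$ that makes both Step 1 (moving from $\Sing(X)$ to $Z$) and Step 2 (invoking ${}^p\!D^{\ge 1}$ cosupport for $i^!\IC_X$ over $Z$) work. Your proposal never invokes it, and the closing suggestion to ``shrink around $x$'' does not remedy this, since shrinking does not make a non-isolated singular locus isolated nor produce the codimension-$3$ locus.
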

\begin{proof}
In this setting, Flenner {\cite[Satz 6.1]{Flenner81}} establishes the isomorphism
$$
\mathrm{Cl}(\O^{\rm an}_{X,x})\isom \varinjlim_{x\in U} H^2(U\sm\Sing(X),\Z),
$$
where the direct limit is taken over all the analytic open neighborhoods of $x$ under inclusion. Denote $j:X\sm\Sing(X)\hookrightarrow X$. Sheaf theoretically, we have
$$
\mathrm{Cl}(\O^{\rm an}_{X,x})\isom\H^2(\iota_x^*Rj_*\Z_{X\sm \Sing(X)}),
$$
and therefore
$$
\mathrm{Cl}(\O^{\rm an}_{X,x})\tensor_{\Z}\Q\isom\H^2(\iota_x^*Rj_*\Q_{X\sm \Sing(X)}).
$$

By Corollary \ref{cor: lcdef of k-rational}, $X$ is a rational homology manifold away from a subvariety of codimension $3$. Therefore, there exists a Zariski closed subset $Z\subset \Sing(X)$ satisfying the following conditions:
\begin{enumerate}[(a)]
    \item $\codim_XZ\ge 3$,
    \item $\Sing(X)\sm Z$ is a smooth equidimensional variety of dimension $n-2$,
    \item $\IC_{X\sm Z}=\Q_{X\sm Z}[n]$.
\end{enumerate}
Let $j_Z:X\sm Z\to X$ be the open embedding. Using this setup, we prove the result in two steps.

\smallskip

\noindent
\emph{Step 1.} We establish an isomorphism of constructible cohomologies
$$
\H^2(Rj_*\Q_{X\sm \Sing(X)})\isom \H^2(R{j_Z}_*\Q_{X\sm Z}).
$$

Let $j':X\sm \Sing(X)\hookrightarrow X\sm Z$ and $i':\Sing(X)\sm Z\hookrightarrow X\sm Z$ be the open and closed embeddings. Consider the standard distinguished triangle
\begin{equation}
\label{eqn: Sing X and Z distinguished triangle}
i'_*{i'}^!\IC_{X\sm Z}\to \IC_{X\sm Z}\to Rj'_*\IC_{X\sm \Sing(X)}\xrightarrow{+1}. 
\end{equation}
Note that we have
$$
{i'}^!\IC_{X\sm Z} \simeq \dual({i'}^*\IC_{X\sm Z}) \simeq \dual(\Q_{\Sing(X)\sm Z}[n])\simeq \Q_{\Sing(X)\sm Z}[n-4].
$$
The first isomorphism comes from the definition of  ${i'}^!$ and the self-duality of intersection complexes, the second is a consequence of condition (c), and the third is a consequence of  condition (b). Therefore, taking the derived pushforward $R{j_Z}_*$ of \eqref{eqn: Sing X and Z distinguished triangle}, we get
$$
R{j_Z}_*\circ i'_*(\Q_{\Sing(X)\sm Z})[n-4]\to R{j_Z}_*\Q_{X\sm Z}[n]\to Rj_*\Q_{X\sm \Sing(X)}[n]\xrightarrow{+1},
$$
which implies that $\H^2(Rj_*\Q_{X\sm \Sing(X)})\isom \H^2(R{j_Z}_*\Q_{X\sm Z})$ from the long  exact sequence of cohomology.

\smallskip

\noindent
\emph{Step 2.} We next establish an isomorphism of constructible cohomologies
$$
\H^2(R{j_Z}_*\Q_{X\sm Z})\isom \H^{-n+2}(\IC_{X}).
$$

Let $i:Z\hookrightarrow X$ be the closed embedding and consider again the standard distinguished triangle
$$
i_*i^!\IC_X\to\IC_X\to R{j_Z}_*\Q_{X\sm Z}[n]\xrightarrow{+1},
$$
where we use condition (c). By \cite[Proposition 8.2.5]{HTT}, we have $i^!\IC_X\in {}^p\!D_c^{\ge 1}(Z,\Q)$, the bounded derived category of constructible $\Q$-sheaves with perverse cohomologies supported in degrees $\ge 1$. Since $\dim Z\le n-3$, we have the vanishing of constructible cohomologies
$$
\H^k(i^!\IC_X)=0 \,\,\,\,\,\, {\rm for} \,\,\,\,\,\, k\le -n+3.
$$
See \cite[Proposition 8.1.24]{HTT}. This implies that
$$
\H^2(R{j_Z}_*\Q_{X\sm Z})\isom \H^{-n+2}(\IC_{X}),
$$
from the long exact sequence of cohomology.

Combining Step 1 and Step 2,  and using the fact that  $\iota_x^*$ is an exact functor on the derived category of constructible sheaves, we conclude that
$$
\mathrm{Cl}(\O^{\rm an}_{X,x})\tensor_{\Z}\Q\isom\H^2(\iota_x^*Rj_*\Q_{X\sm \Sing(X)})\isom \H^{-n+2}(\iota_x^*\IC_X).
$$
\end{proof}

We record an immediate consequence of this proposition.

\begin{cor}\label{cor:RHM-Qfact}
If $X$ is an RHM with rational singularities, then $X$ is locally analytically $\Q$-factorial. 
\end{cor}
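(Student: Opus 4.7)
The plan is to apply the just-established Proposition~\ref{prop: local defect of Q-factoriality} directly, using that the RHM condition trivializes the intersection complex. Recall that local analytic $\Q$-factoriality at $x \in X$ means precisely that $\mathrm{Cl}(\O^{\rm an}_{X,x})$ is a torsion group, equivalently $\mathrm{Cl}(\O^{\rm an}_{X,x}) \otimes_\Z \Q = 0$. Since this is a pointwise property, it suffices to check it at each closed point $x \in X$.

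By Proposition~\ref{prop: local defect of Q-factoriality}, applied at each $x$ where $X$ has rational singularities (which is everywhere by hypothesis), there is an isomorphism
$$\mathrm{Cl}(\O^{\rm an}_{X,x})\tensor_\Z \Q \isom \H^{-n+2}(\iota_x^*\IC_X).$$
Thus the task reduces to showing that the right-hand side vanishes under the RHM hypothesis.

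Since $X$ is a rational homology manifold, the canonical morphism $\gamma_X\colon \Q_X^H[n]\to \IC_X^H$ is an isomorphism, so in particular $\IC_X \simeq \Q_X[n]$ as constructible complexes. Pulling back along the closed embedding $\iota_x\colon \{x\} \hookrightarrow X$ gives $\iota_x^*\IC_X \simeq \Q_{\{x\}}[n]$, whose cohomology is concentrated in degree $-n$. Consequently
$$\H^{-n+2}(\iota_x^*\IC_X) \simeq \H^{2}(\Q_{\{x\}}) = 0,$$
which combined with the displayed isomorphism yields $\mathrm{Cl}(\O^{\rm an}_{X,x})\tensor_\Z \Q = 0$, as desired. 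There is no real obstacle here beyond correctly invoking the two ingredients; the proof is essentially a one-line consequence of Proposition~\ref{prop: local defect of Q-factoriality} combined with the defining property of rational homology manifolds.
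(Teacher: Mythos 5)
Your proof is correct and is precisely what the paper has in mind when it calls the corollary ``an immediate consequence'' of Proposition~\ref{prop: local defect of Q-factoriality}: unwind local analytic $\Q$-factoriality as torsion of $\mathrm{Cl}(\O^{\rm an}_{X,x})$, invoke the proposition's isomorphism, and note that the RHM condition forces $\iota_x^*\IC_X \simeq \Q_{\{x\}}[n]$ to be concentrated in degree $-n$, so the degree $-n+2$ cohomology vanishes.
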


We may also consider the related object ${\rm Pic}^{\rm an-loc} (X, x)$, the local analytic Picard group at $x$. For a general discussion, see \cite[\S1]{Kollar-Picard}; when $\depth_x \O_X \ge 3$, it coincides with the analytic Picard group of a punctured neighborhood $U \smallsetminus \{x\}$, where $U$ is the intersection of $X$ with a small ball centered at $x$.
Therefore, in this case, we have 
$${\rm Pic}^{\rm an-loc} (X, x) \simeq H^1 (U \smallsetminus \{x\}, \O^*).$$

\begin{cor}\label{cor:equiv-threefold}
Let $X$ be a normal variety which is an RHM away from finitely many points. If $X$ has rational singularities, then for every $x \in X$ we have 
$${\rm Pic}^{\rm an-loc} (X, x)\otimes_{\Z} \Q \simeq {\rm Cl} (\O_{X, x}^{{\rm an}})\otimes_{\Z} \Q.$$
More generally, this holds when $R^i f_* \O_{\widetilde X} = 0$ for $i = 1,2$ with $f \colon \widetilde X \to X$ a resolution.
%In particular, the following are equivalent:
%\begin{enumerate}
%\item $X$ has torsion local analytic Picard groups.
%\item $X$ is locally analytically $\Q$-factorial.
%\end{enumerate}
\end{cor}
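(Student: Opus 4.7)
The plan is to show that, after tensoring with $\Q$, both groups agree with $H^2(V, \Q)$, where $V = U \smallsetminus \{x\}$ for a sufficiently small contractible Stein analytic neighborhood $U$ of $x$. By hypothesis, after shrinking $U$ we may assume $x$ is the only non-RHM point of $X$ in $U$, so that $V$ itself is a rational homology manifold. If $\dim X \le 2$, then $X$ is already an RHM at $x$ by Theorem \ref{thm:RHM-surfaces} and both sides vanish after $\otimes \Q$; so assume $n = \dim X \ge 3$. The hypothesis $R^1 f_* \O_{\widetilde X} = 0$ implies $\mathrm{depth}_x \O_{X,x} \ge 3$, and hence $\mathrm{Pic}^{\mathrm{an-loc}}(X,x) = H^1(V, \O_V^*)$ by \cite[\S1]{Kollar-Picard}.

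\emph{Class group side.} Proposition \ref{prop: local defect of Q-factoriality} yields $\mathrm{Cl}(\O_{X,x}^{\mathrm{an}}) \otimes \Q \simeq \H^{-n+2}(\iota_x^* \IC_X)$, and the Flenner-based argument in its proof further identifies this with $H^2(V_{\mathrm{reg}}, \Q)$, where $V_{\mathrm{reg}} = V \smallsetminus \Sing(X)$. Since $V$ is a rational homology manifold and $V_{\mathrm{sing}} := V \cap \Sing(X)$ has codimension at least two in $V$, Poincar\'e duality on $V$ gives $H^i_{V_{\mathrm{sing}}}(V, \Q) = 0$ for $i \le 3$, and the corresponding long exact sequence then forces $H^2(V, \Q) \xrightarrow{\sim} H^2(V_{\mathrm{reg}}, \Q)$.

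\emph{Picard side.} The exponential exact sequence
$$H^1(V, \O_V) \to H^1(V, \O_V^*) \to H^2(V, \Z) \to H^2(V, \O_V),$$
together with the vanishing $H^1(V, \O_V) = 0$ (which follows from $\mathrm{depth}_x \O_{X,x} \ge 3$ and $U$ being Stein), yields after $\otimes \Q$ the identification $\mathrm{Pic}^{\mathrm{an-loc}}(X,x) \otimes \Q \simeq \ker\!\left[H^2(V, \Q) \to H^2(V, \O_V)\right]$. The main task, and the principal obstacle in the proof, is to show this kernel is all of $H^2(V, \Q)$, i.e.\ that the displayed map vanishes. This is the Hodge-theoretic content: one uses the Du Bois property of $X$ together with Steenbrink--Durfee-type vanishing for the mixed Hodge structure on the link of an isolated rational singularity. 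Once this is in place, combining the three identifications yields $\mathrm{Pic}^{\mathrm{an-loc}}(X,x) \otimes \Q \simeq H^2(V, \Q) \simeq \mathrm{Cl}(\O_{X,x}^{\mathrm{an}}) \otimes \Q$, as claimed.
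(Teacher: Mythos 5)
Your overall plan is the same as the paper's: both sides are identified with $H^2(V,\Q)$, which is the stalk $\H^{-n+2}(\iota_x^*\IC_X)$ (the paper phrases everything in terms of this stalk via Proposition \ref{prop: local defect of Q-factoriality} and Proposition \ref{prop:RHM-isolated}, while you unpack it). Your identification on the class-group side, including the Poincar\'e-duality-on-the-RHM $V$ step to pass from $H^2(V_{\mathrm{reg}},\Q)$ to $H^2(V,\Q)$, is fine.

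However, there is a genuine gap exactly where you flag ``the main task, and the principal obstacle'': the vanishing of $H^2(V,\Q)\to H^2(V,\O_V)$ is not proved, only asserted to follow from ``the Du Bois property together with Steenbrink--Durfee-type vanishing.'' That gesture is too vague to be a proof, and it is not what the paper uses. The precise statement you need is Flenner's Lemma, stated in the paper as Lemma \ref{lem:Flenner}: if $X$ is normal and $R^i f_*\O_{\widetilde X}=0$ for $1\le i\le s$, then for any Stein open $U\subset X$ and closed $Y\subset U$, the natural map $H^i(U\sm Y,\Z)\to H^i(U\sm Y,\O)$ is zero for $1\le i\le s$. Under the hypothesis $R^1f_*\O_{\widetilde X}=R^2f_*\O_{\widetilde X}=0$ (so $s\ge 2$), applying this with $Y=\{x\}$ and $i=2$ gives precisely the vanishing you need. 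In the paper's presentation this is already packaged into Corollary \ref{cor:L-link}(ii) and then Proposition \ref{prop:RHM-isolated} (taken with $s=2$, using $\depth_x\O_X\ge 3$ which follows from $R^1f_*\O_{\widetilde X}=0$ by the $S_3$ result cited from \cite{PP25} and \cite{Kollar}), which combined with Proposition \ref{prop: local defect of Q-factoriality} yields the corollary immediately. So your argument is repairable and essentially the right one, but you should cite Lemma \ref{lem:Flenner} rather than leaving the key step as an unverified appeal to Steenbrink--Durfee.

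Two smaller points worth tightening: (1) you should justify $\mathrm{depth}_x\O_X\ge 3$ from $R^1f_*\O_{\widetilde X}=0$ (the $S_3$ result); your claim that ``$R^1f_*\O_{\widetilde X}=0$ implies $\mathrm{depth}_x\O_{X,x}\ge 3$'' is correct once $n\ge 3$ but needs a reference. (2) Your reduction to $n\ge 3$ via Theorem \ref{thm:RHM-surfaces} should note that in dimension $\le 2$ both groups are already finite (hence torsion) for rational surface singularities, so that ``both sides vanish after $\otimes\Q$'' is justified rather than just asserted.
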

\begin{proof}
When $R^1f_*O_{\widetilde X}=0$, $X$ satisfies $S_3$; see \cite[Lemma 6.3]{PP25}, which is a result from \cite{Kollar}.
Therefore, this follows from Proposition \ref{prop: local defect of Q-factoriality} combined with Proposition \ref{prop:RHM-isolated} in the Appendix, 
showing that both sides are isomorphic to $\H^{-n+2}(\iota_x^*\IC_X)$. Note that, as in the Appendix, Proposition \ref{prop: local defect of Q-factoriality} works under the assumptions in the final part as well.
\end{proof}

\subsection{A criterion for threefolds}
\label{sec: Projective threefold with rational singularities}
In the case of threefolds, when aiming to characterize rational homology manifolds, Lemma \ref{lem:vanishing-R1} suggests that at least in the world of  Du Bois (so for instance log canonical) singularities, we need to focus on varieties with rational singularities.  However the situation is more complicated than in the case of surfaces, as the first local cohomological invariant comes into play via Theorem \ref{thm:RHM-characterization} in the 
Appendix.

\begin{proof}[{Proof of Theorem \ref{thm:RHM-threefolds}}]
When $X$ is projective, the equivalence between (i) and (iv) is a special case of Theorem \ref{thm:RHM-symmetry}, since 
$X$ is an RHM away from a finite set of points by Corollary \ref{cor: lcdef of k-rational}.

Again since $X$ is an RHM away from finitely many points,  the equivalence between (i) and (ii) is a consequence of the general characterization of RHMs in Theorem \ref{thm:RHM-characterization} in the Appendix.

Finally, the equivalence between (ii) and (iii) is a general property of all varieties that are RHMs away from finitely many points; see 
Corollary \ref{cor:equiv-threefold}.
\end{proof}

%\begin{rmk}
%Note in particular that the statement and proof are in line with the general fact that (local) analytic $\Q$-factoriality implies $\Q$-factoriality, and is typically a stronger condition.
%\end{rmk} 

\begin{rmk}
A more refined way of obtaining the equivalence between (ii) and (iv) in the projective case, by means of concrete formulas for the $\Q$-factoriality defect and local analytic $\Q$-factoriality defect in terms of the Hodge-Du Bois numbers, is explained in \cite{PP25}.
\end{rmk}

\subsection{Criteria for fourfolds and higher}
The pattern started in the previous two sections continues in higher dimension, in terms of the local cohomological invariants $H^i_L (X, x)$ 
introduced in the Appendix. We first record the following consequence of Theorem \ref{thm:RHM-threefolds}.

\begin{cor}\label{cor:rhm-fourfold-finite}
A variety $X$ with rational locally analytically $\Q$-factorial singularities is an RHM away from a closed subset of codimension at least $4$.
\end{cor}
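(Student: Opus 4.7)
The plan is to reduce, via a chain of transverse hyperplane slices, to the threefold case treated in Theorem \ref{thm:RHM-threefolds}; the descent of local analytic $\Q$-factoriality to the slice is the key step, carried out by combining Flenner's identification in Proposition \ref{prop: local defect of Q-factoriality} with Saito's transverse restriction formula for the intersection complex.

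Since $X$ has rational singularities, it satisfies condition $D_0$, so by Corollary \ref{cor: lcdef of k-rational} it is a rational homology manifold away from a closed subset of codimension at least $3$. Suppose for contradiction that the non-RHM locus of $X$ has an irreducible component $W$ of codimension exactly $3$, and let $x \in W$ be a general closed point. Embedding a neighborhood of $x$ in a smooth ambient variety $Y_0$, I would choose hyperplane sections $L_1, \ldots, L_{n-3}$, each passing through $x$ and very general in its respective linear subsystem of hyperplanes through $x$, so that both Lemma \ref{lem: cutting hyperplane of K} and the companion formula $\iota^* \IC^H_X \simeq \IC^H_{X \cap L}[1]$ for the intersection complex apply at each step. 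Setting $Y := X \cap L_1 \cap \cdots \cap L_{n-3}$, genericity ensures that $Y$ is a threefold with rational singularities (inherited from $X$), whose only singular point near $x$ is $x$ itself.

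Iterating Saito's restriction formula gives $\iota_Y^* \IC^H_X \simeq \IC^H_Y[n-3]$; pulling back further to $x$ along the evident embeddings then yields
$$\H^{-1}(\iota_x^* \IC^H_Y) \simeq \H^{-n+2}(\iota_x^* \IC^H_X).$$
Applying Proposition \ref{prop: local defect of Q-factoriality} twice, in dimensions $3$ and $n$ respectively, identifies the two sides with $\mathrm{Cl}(\O^{\rm an}_{Y,x}) \otimes_\Z \Q$ and $\mathrm{Cl}(\O^{\rm an}_{X,x}) \otimes_\Z \Q$; the local analytic $\Q$-factoriality hypothesis on $X$ then forces both to vanish. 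Hence $Y$ is rational and locally analytically $\Q$-factorial at $x$, so by Theorem \ref{thm:RHM-threefolds} it is a rational homology manifold, giving $\K^\bullet_Y = 0$ in a neighborhood of $x$ in $Y$.

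To close the argument, iterate Lemma \ref{lem: cutting hyperplane of K} to obtain $\K^\bullet_Y \simeq \iota_Y^* \K^\bullet_X [-(n-3)]$, so $\iota_Y^* \K^\bullet_X$ vanishes in a neighborhood of $x$ in $Y$. Since $Y$ was chosen to meet $W$ transversely with $W \cap Y = \{x\}$ locally, and since non-characteristic pullback on mixed Hodge modules is exact and faithfully detects the structure along a transverse slice, we conclude that $\K^\bullet_X = 0$ near $x$ in $X$, contradicting $x \in W$. The main obstacle is the descent of local analytic $\Q$-factoriality from $X$ to the slice $Y$; this is exactly where Flenner's identification, channelled through Proposition \ref{prop: local defect of Q-factoriality}, converts an analytic condition into a statement about a cohomology sheaf of $\IC_X^H$ that behaves compatibly with transverse pullback, and it is also here that one must verify the technical compatibility of the ``very general'' requirement of Lemma \ref{lem: cutting hyperplane of K} with the constraint that the $L_i$ pass through the chosen point $x$.
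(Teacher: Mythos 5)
Your argument is essentially the paper's argument: slice with hyperplanes down to a threefold, descend local analytic $\Q$-factoriality to the slice by combining Flenner's identification (Proposition \ref{prop: local defect of Q-factoriality}) with the restriction formula $\iota^*\IC_X^H \simeq \IC_{X\cap L}^H[1]$, and invoke Theorem \ref{thm:RHM-threefolds}. The one place where you diverge — and where you introduce a gap that the paper avoids — is in running a proof by contradiction with hyperplanes constrained to pass through a fixed general point $x$ of a hypothetical codimension-$3$ component $W$ of the non-RHM locus. Both Lemma \ref{lem: cutting hyperplane of K} and the formula \eqref{eqn:restriction-IC} are stated for \emph{very general} hyperplanes, which in particular avoid any finite set you fix in advance; once you demand $x\in L_i$ you can no longer quote these results off the shelf, and you would need to argue separately that a general member of the linear system of hyperplanes through a general point of the stratum $W$ is still non-characteristic for $\Q_X^H[n]$ and $\IC_X^H$. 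You flag this issue yourself at the end but do not close it.

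The paper's proof avoids the problem entirely by using genuinely general hyperplanes with no incidence condition. It shows that the general codimension-$(n-3)$ slice $Z$ inherits rational and locally analytically $\Q$-factorial singularities, hence is an RHM by Theorem \ref{thm:RHM-threefolds}; then $\K_Z^\bullet \simeq \iota_Z^*\K_X^\bullet[-(n-3)] = 0$, so $\Supp\K_X^\bullet$ misses the general $3$-dimensional slice and therefore has dimension $\le n-4$. This is the same descent mechanism you use, just organized to stay within the hypotheses of the quoted lemmas. If you want to rescue your version, the cleanest fix is to drop the constraint through $x$ entirely and argue directly as the paper does; alternatively, you would need to supply a lemma asserting that for $x$ a sufficiently general point of a stratum, a general hyperplane through $x$ is non-characteristic, which is true but requires proof.
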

\begin{proof}
We may assume that $X$ is quasi-projective. Note first that a general hyperplane section  $Y$ of $X$ is also locally analytically $\Q$-factorial, with rational singularities. The rational singularities part is well known, while the local analytic $\Q$-factoriality part follows from Proposition \ref{prop: local defect of Q-factoriality} and the behavior of the intersection complex with respect to taking general hyperplane sections; see the proof of Lemma \ref{lem: cutting hyperplane of K}.

The result follows then by cutting with general hyperplanes all the way down to the case of a threefold $Z$, which by Theorem \ref{thm:RHM-threefolds} is an RHM.
\end{proof}

\begin{proof}[{Proof of Theorem \ref{thm:RHM-fourfolds}}]
By Corollary \ref{cor:rhm-fourfold-finite}, $X$ is an RHM away from finitely many points. By Corollary \ref{cor:equiv-threefold}, local analytic $\Q$-factoriality is then equivalent to torsion local analytic Picard groups. Moreover, the local analytic gerbe groups being torsion, in the language of the Appendix, is equivalent to $X$ satisfying property $L_2$ along closed points.

The result then follows from the general Theorem \ref{thm:RHM-symmetry} and Theorem \ref{thm:RHM-characterization}.
\end{proof}

In arbitrary dimension, Theorem \ref{thm:RHM-characterization} in the Appendix provides a criterion for the RHM condition in terms of local analytic invariants; the interpretation of the higher such invariants (for $i \ge 3$) is however more mysterious. Combined with Theorem \ref{thm:RHM-symmetry}, we have the following:

\begin{cor}\label{cor:RHM-general}
If $X$ is a variety with rational singularities which is an RHM away from finitely many points, then $X$ is an RHM if and only if 
$$H^i (U \sm \{x\}, \O^*) \otimes_\Z \Q = 0  \,\,\,\,\,\,{\rm for~all}\,\,\,\, 1 \le i \le n-2,$$
for all $x \in X$, where $U$ is a sufficiently small analytic neighborhood of $x$. When $X$ is projective, this is moreover equivalent to the full symmetry of the
Hodge-Du Bois diamond of $X$.
\end{cor}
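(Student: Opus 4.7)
The plan is to deduce this corollary from two results already stated in the excerpt: the general local-analytic characterization of the RHM condition given by Theorem \ref{thm:RHM-characterization} in the Appendix, and the symmetry criterion of Theorem \ref{thm:RHM-symmetry}. The rational singularities assumption and the RHM-away-from-finite-set hypothesis are precisely what is needed to invoke both.

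For the first equivalence, I would first apply Theorem \ref{thm:RHM-characterization}: under the hypothesis that $X$ is an RHM away from finitely many points, that theorem asserts that $X$ is itself an RHM if and only if condition $L_k$ holds along each of the remaining singular points for $k$ up to $n-2$. It then remains to identify condition $L_i$, as formulated in the Appendix, with the vanishing of $H^i(U \sm \{x\}, \O^*) \otimes_{\Z} \Q$. This matches the pattern already verified in low dimensions in the body of the paper: Flenner's theorem \cite{Flenner81} together with Proposition \ref{prop: local defect of Q-factoriality} shows that $L_1$ is the torsion condition on ${\rm Cl}(\O^{\rm an}_{X,x}) \simeq {\rm Pic}^{\rm an-loc}(X,x) \otimes \Q \simeq H^1(U \sm \{x\}, \O^*)\otimes \Q$, using rationality and depth $\ge 3$ (see Corollary \ref{cor:equiv-threefold}); while $L_2$ corresponds to the gerbe group $H^2(U \sm \{x\}, \O^*)$ being torsion, as already exploited in Theorem \ref{thm:RHM-fourfolds}. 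In higher degrees, the identification proceeds via the long exact sequence of the analytic exponential sheaf sequence on $U \sm \{x\}$ together with the vanishing/finiteness of the relevant coherent cohomology groups on a sufficiently small punctured analytic germ, which is controlled by rational singularities.

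For the projective statement, I would simply invoke Theorem \ref{thm:RHM-symmetry}: since $X$ is an RHM away from finitely many points by hypothesis, that theorem already yields the equivalence between the RHM condition and full symmetry of the Hodge--Du Bois diamond, with no further work.

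The main obstacle is verifying that the precise formulation of $L_k$ in the Appendix genuinely corresponds to the stated $\Q$-vanishing of $H^k(U \sm \{x\}, \O^*)$. While this is transparent for $k = 1, 2$, where the intrinsic interpretation is explicit in the main text, in higher $k$ one must carefully track the exponential sequence on the punctured analytic germ together with the Flenner-type comparison between topological cohomology of the link and the cohomology of $U \sm \{x\}$, in order to ensure that the torsion part matches across the identification. Apart from this bookkeeping, the corollary is a direct consequence of combining the two cited theorems.
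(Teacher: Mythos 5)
Your approach -- combining Theorem \ref{thm:RHM-characterization} with Theorem \ref{thm:RHM-symmetry} -- is exactly the paper's proof, and both invocations are correct as you describe them. However, you manufacture an obstacle where there is none. You worry at length about ``identifying condition $L_i$ with the vanishing of $H^i(U\sm\{x\},\O^*)\otimes_\Z\Q$,'' invoking Flenner's theorem and the exponential sequence for the higher-degree cases. But this identification is purely definitional: the Appendix defines $H^i_L(X,x) := \varinjlim_{x\in U} H^i(U\sm\{x\},\O^*_{U\sm\{x\}})$, and property $L_{n-2}$ along the finite set $Z$ is, by stratifying $Z$ by its individual points (so each $d_S=0$), precisely the vanishing of $H^i_L(X,x)\otimes_\Z\Q$ for $1\le i\le n-2$ and $x\in Z$. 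Once one observes that the direct limit stabilizes over cone-like Stein neighborhoods, this is verbatim the condition in the corollary. No exponential sequence or Flenner-type comparison is needed to pass between the two formulations; those ingredients belong to the \emph{proof} of Theorem \ref{thm:RHM-characterization} (via Proposition \ref{prop:RHM-isolated} and Lemma \ref{lem:Flenner}), which you are using as a black box. One small gap you should also fill: the corollary quantifies over all $x\in X$, not just $x\in Z$. For the ``if'' direction this costs nothing, since the vanishing at $Z$ already suffices; for the ``only if'' direction, the vanishing at points $x\notin Z$ (and at $x\in Z$ once $X$ is RHM) is exactly the final sentence in the ``Equivalently'' paragraph following Theorem \ref{thm:RHM-characterization}.
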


This subsumes the statements in Theorems \ref{thm:RHM-threefolds} and \ref{thm:RHM-fourfolds}.

\section{Appendix}
The appendix is devoted to characterizations of the local cohomological defect of a complex variety $X$, and of the RHM condition, in terms of local cohomological invariants. 
This is needed in the body of the paper, but is also of independent interest. We translate a topological characterization of the local cohomological dimension in \cite{RSW23} into a characterization in terms of the cohomology of $\O^*$ over a punctured neighborhood. In the case of $\lcdef (X)$, this extends the results of Ogus and Dao-Takagi to arbitrary dimension (or depth); see Remark \ref{rmk:DT}.

\subsection{Characterizations of $\lcdef (X)$ and of RHMs}\label{scn:appendix}
In this section, $X$ is a complex variety of dimension $n$. For all $i$, we denote:
$$
H^i_L (X, x) : =\varinjlim_{x\in U} H^i \left(U\sm \left\{x\right\},\O_{U\sm \left\{x\right\}}^*\right)
$$
where the direct limit is taken over all the analytic open neighborhoods of $x$ under inclusion. (Here the letter $L$ stands for ``local".)

\begin{defn}
We say that $X$ satisfies \textit{property $L_k$ along a closed subset $Z\subset X$} if there exists a stratification $Z=\sqcup_S S$ by locally closed subsets such that, for every stratum $S$ and every point $x\in S$,
$$H^i_L(X, x) \otimes_\Z \Q = 0 \,\,\,\,\,\,{\rm for~all}\,\,\,\, 1 \le i \le k + d_S,$$
where $d_S:=\dim S$.
\end{defn}

\begin{rmk}
Note that $H^1 _L (X, x) = {\rm Pic}^{\rm an-loc} (X, x)$, the local analytic Picard group of $X$ at $x$. Property $L_1$ along $Z = \{x\}$ is equivalent to these groups being torsion. 

Analogously, we can interpret $H^2_L (X, x)$ as the local analytic gerbe group of $X$ (meaning gerbes with band $\O_X^*$; see e.g. \cite[Theorem 5.2.8]{Brylinski}), and this has the same relationship with property $L_2$.
\end{rmk}

\noindent
{\bf Local cohomological defect.}
We have seen in Theorem \ref{thm:Lefschetz-lcdef} and Theorem \ref{thm:lcdef-top} that $\lcdef (X)$ is one of the main invariants governing the topology of $X$, the ideal situation 
being $\lcdef (X) = 0$. For this to happen, \cite{DT16} tells us that when $n \ge 4$ at the very least we need the local Picard groups of $X$ to be torsion, i.e. property $L_1$. In general, we have:

\begin{thm}\label{thm:lcdef-characterization2}
Let $X$ be an $n$-dimensional variety. Let $Z\subset X$ be a closed subset, satisfying
$$\dim Z \le \depth_x(\O_X) - k$$
for every closed point $x\in Z$. Assume that $\lcdef (X\sm Z) \le n-k$. Then $X$ satisfies property $L_{k-3}$ along $Z$ if and only if $\lcdef (X) \le n-k$. 
\end{thm}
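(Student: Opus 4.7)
The strategy is to translate both $\lcdef(X)\le n-k$ and property $L_{k-3}$ along $Z$ into matching vanishing statements for the reduced cohomology of punctured analytic neighborhoods $V_x\sm\{x\}$, at each closed point $x\in Z$, and then to identify these via the exponential sheaf sequence.

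By Theorem \ref{thm:lcdef-top}, the condition $\lcdef(X)\le n-k$ is equivalent to $\Q_X^H[n]\in{}^pD^{\ge -(n-k)}$; the hypothesis $\lcdef(X\sm Z)\le n-k$ ensures this holds over $X\sm Z$, so only the check along $Z$ remains. Using the support characterization of the perverse $t$-structure, or equivalently the topological characterization of local cohomological dimension from \cite{RSW23} and \cite{BBLSZ} going back to Ogus, the condition at a closed point $x$ in a stratum $S$ of dimension $d_S$ translates, via the identification $H^j_{\{x\}}(V_x,\Q)\simeq H^{j-1}(V_x\sm\{x\},\Q)$ for $j\ge 2$ and small Stein $V_x$, into the vanishing of $H^j(V_x\sm\{x\},\Q)$ for $j$ in a prescribed range depending on $k$ and $d_S$. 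A clean way to pin down this range is via a transverse normal slice $T$ of dimension $n-d_S$ through $x$, non-characteristic for the stratification: this preserves the perverse-sheaf condition, the depth hypothesis, and the punctured-neighborhood cohomology, reducing the problem to the isolated singularity case $d_S=0$.

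On the other hand, property $L_{k-3}$ at $x\in S$ translates directly via the exponential sheaf sequence $0\to\Z\to\O\to\O^\ast\to 0$ on $V_x\sm\{x\}$. Tensoring with $\Q$ and taking the direct limit over shrinking $V_x$ gives
\[
\varinjlim H^i(V_x\sm\{x\},\O)\to H^i_L(X,x)\otimes\Q\to \varinjlim H^{i+1}(V_x\sm\{x\},\Q)\to \varinjlim H^{i+1}(V_x\sm\{x\},\O).
\]
Using $H^i(V_x\sm\{x\},\O)\simeq H^{i+1}_{\mathfrak m}(\O_{X,x}^{\rm an})$ for $i\ge 1$ and the standard depth characterization of local cohomology, the hypothesis $\depth_x\O_X\ge d_S+k$ forces $\varinjlim H^i(V_x\sm\{x\},\O)=0$ for $1\le i\le d_S+k-2$. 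In this range the exponential sequence yields an isomorphism $H^i_L(X,x)\otimes\Q\xrightarrow{\sim}\varinjlim H^{i+1}(V_x\sm\{x\},\Q)$, so property $L_{k-3}$ at $x$ is exactly the vanishing of $H^j(V_x\sm\{x\},\Q)$ for $2\le j\le k-2+d_S$. This matches the range produced by the perverse reformulation above, and the equivalence follows point-by-point along $Z$.

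The main obstacle is the precise range matching at the boundary degrees: the isomorphism $H^i_L(X,x)\otimes\Q\simeq \varinjlim H^{i+1}(V_x\sm\{x\},\Q)$ holds only in the interior of the vanishing range of $H^\ast(\O)$, so endpoints where only one of $H^i(\O)$, $H^{i+1}(\O)$ vanishes need separate treatment via the long exact sequence, as do the lowest-degree terms $H^0$ and $H^1$ of the punctured-neighborhood cohomology. These are controlled by the $S_2$-property of $X$ at $x$ provided by the depth hypothesis $\depth_x\O_X\ge k+d_S\ge 2$, giving local connectedness of $V_x\sm\{x\}$ and hence the vanishing of $H^0_{\{x\}}$ and $H^1_{\{x\}}$ terms that would otherwise contaminate the identification.
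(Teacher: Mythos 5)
Your overall plan is in the right spirit and uses the same underlying ingredients (the Ogus/RSW topological characterization of $\lcdef$, the exponential sequence, depth-driven vanishing of $H^\ast_{\{x\}}(\O)$), but there are two genuine gaps.

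First, the transverse-slice reduction is not as clean as you claim. Cutting to a normal slice $T$ of dimension $n-d_S$ does \emph{not} preserve the punctured-neighborhood cohomology: Lemma \ref{lem:link-top} gives a \emph{degree shift}
$$\widetilde H^i(U\sm\{x\},\Q)\simeq\widetilde H^{\,i-2d_S}(L(T_x,x),\Q),$$
so the range of degrees in which $\widetilde H^*(U\sm\{x\},\Q)$ must vanish (which is $i\le k+d_S-2$) differs from the range for the slice ($j\le k-d_S-2$) by $2d_S$. Moreover, the groups $H^i_L(X,x)$ are computed from the exponential sequence on the punctured germ of $X$ at $x$, not of $T$, and these are not identified with $H^i_L(T,x)$ by slicing. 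You therefore cannot simply ``reduce to $d_S=0$'': you must work with the punctured germ of $X$ throughout, and keep track of the $+d_S$ appearing both in the vanishing range for $\widetilde H^*$ and in the definition of property $L_{k-3}$. The paper achieves exactly this bookkeeping by re-expressing the RSW criterion in the form of Corollary \ref{cor:lcdef-vanishing} (with $+d_S$ rather than $-d_S$), which is monotone under refinement of stratifications; this is Proposition \ref{prop:lcdef-vanishing}, and it is also what allows one to use an arbitrary stratification of $Z$ rather than a Whitney one---a point your proposal does not address.

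Second, and more seriously, the residual degrees $i=0,1$ are not ``controlled by the $S_2$-property.'' Depth $\ge 2$ gives $H^0_{\{x\}}(U,\O)=H^1_{\{x\}}(U,\O)=0$, but these are \emph{coherent} vanishings; they do not by themselves yield the \emph{topological} vanishing $\widetilde H^1(U\sm\{x\},\Q)=0$ needed to complete the argument. (Connectedness of the punctured germ, i.e.\ $\widetilde H^0=0$, does follow from $S_2$.) In the range where $\widetilde H^1(U\sm\{x\},\Q)=0$ is required, the depth hypothesis in fact gives $\depth_x\O_X\ge 3$, and then what one invokes is the Dao--Takagi theorem $\depth(\O_X)\ge3\Rightarrow\lcdef(X)\le n-3$ (together with Ogus and the local Lichtenbaum theorem for the case $k\le 2$), as the paper does. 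This is a nontrivial theorem, not a direct consequence of $S_2$. Your plan conflates vanishing of local cohomology of $\O$ with vanishing of reduced singular cohomology of the punctured germ, and understates the strength of the input needed at the low-degree endpoints.
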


As a special case, when $\lcdef (X) \le n-k$ and ${\rm depth} (\O_X) \ge k$, we have the property $L_{k-3}$ along $\{x\}$ for every point $x\in X$:
$$
H^i_L(X, x) \otimes_\Z \Q = 0 \,\,\,\,\,\,{\rm for~all}\,\,\,\, 1 \le i \le k-3.
$$
When $X$ has isolated singularities, the converse holds as well; we write this down separately, as it is a particularly clean statement that provided the intuition for the general picture: 

\begin{cor}
If $X$ has isolated singularities and ${\rm depth} (\O_X) \ge k$, then
$$\lcdef (X) \le n- k \iff X~{\rm satisfies}~L_{k-3}~{\rm along~each}~x \in X.$$
\end{cor}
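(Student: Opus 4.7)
The plan is to derive this as a direct specialization of Theorem \ref{thm:lcdef-characterization2} applied to $Z := \Sing(X)$. Since the singularities of $X$ are isolated, $Z$ is a finite set of closed points and in particular $\dim Z = 0$; moreover the assumption $\depth(\O_X) \ge k$ forces $k \le n$, as depth is bounded by the local dimension.

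First I would verify the two hypotheses of Theorem \ref{thm:lcdef-characterization2} with this choice of $Z$. The inequality $\dim Z \le \depth_x(\O_X) - k$ at each closed point $x \in Z$ reduces to $\depth_x(\O_X) \ge k$, which is precisely our standing assumption. The hypothesis $\lcdef(X \sm Z) \le n - k$ is automatic, because $X \sm Z$ is smooth and hence $\lcdef(X \sm Z) = 0$. The theorem then yields
\[
\lcdef(X) \le n - k \iff X \text{ satisfies } L_{k-3} \text{ along } Z.
\]

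Stratifying $Z$ by its individual points (each a stratum of dimension $d_S = 0$), the right-hand side unwinds to $H^i_L(X, x) \otimes_\Z \Q = 0$ for all $1 \le i \le k - 3$ and every $x \in \Sing(X)$. To match the corollary's formulation ``along each $x \in X$'' I would finally observe that the same vanishing is automatic at smooth points in the relevant range: for smooth $x$, a small Stein neighborhood $U$ has $U \sm \{x\}$ homotopy equivalent to $S^{2n-1}$, and combining the exponential sequence $0 \to \Z \to \O \to \O^* \to 0$ with $H^i(S^{2n-1},\Z) = 0$ for $0 < i < 2n-1$ and with the local-cohomology vanishing $H^j_{\{x\}}(\O_U) = 0$ for $j \neq n$ gives $H^i_L(X,x) \otimes_\Z \Q = 0$ for $1 \le i \le n-2$, which subsumes the range $1 \le i \le k - 3 \le n - 3$.

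The corollary itself is thus essentially a cosmetic unwinding once Theorem \ref{thm:lcdef-characterization2} is in hand; I do not expect any genuine obstacle at this stage. The substantive work lives inside that theorem, whose proof rests on the Riemann-Hilbert characterization of $\mathrm{lcd}$ from \cite{RSW23, BBLSZ} together with Flenner's comparison between cohomology of $\O^*$ on punctured analytic neighborhoods and the stalks of the intersection complex.
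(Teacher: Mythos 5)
Your proposal is correct and matches the paper's intended proof exactly: the corollary is a direct specialization of Theorem~\ref{thm:lcdef-characterization2} with $Z=\Sing(X)$, a finite set (so $\dim Z = 0$, $X\sm Z$ smooth, and $\lcdef(X\sm Z)=0\le n-k$ since $k\le n$). The only additional detail — that $L_{k-3}$ at smooth points is automatic via the exponential sequence and the vanishing of $H^{i+1}(S^{2n-1},\Z)$ and $H^i(U\sm\{x\},\O)$ in the range $1\le i\le n-2$ — you carry out correctly, and it is precisely what is implicit in the paper's preceding remark that the vanishing holds at \emph{every} point of $X$.
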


Additionally, we obtain a similar criterion when we have partial vanishing of the higher direct images of the structure sheaf of a resolution.

\begin{thm}\label{thm:lcdef-characterization}
Let $X$ be an $n$-dimensional normal variety, and let $s$ be a positive integer with the property that for a resolution of singularities $f \colon \widetilde{X} \to X$ we have $R^i f_* \O_{\widetilde{X}} = 0$ for all $1\le i \le s$. Let $Z\subset X$ be a closed subset, and $k\ge 0$ an integer, satisfying
$$\dim Z \le s - k + 2.$$
Assume that $\lcdef (X\sm Z) \le n-k$. If $X$ satisfies property $L_{k-3}$ along $Z$, then $\lcdef (X) \le n-k$.  
\end{thm}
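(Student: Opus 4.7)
The strategy is to reduce Theorem~\ref{thm:lcdef-characterization} to the ``if'' direction of Theorem~\ref{thm:lcdef-characterization2} by using the partial vanishing of the higher direct images $R^if_*\O_{\widetilde X}$ to upgrade them to the depth hypothesis required there. Note that only the one-directional implication is claimed, so we do not need any converse.

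The key ingredient I would invoke is the following depth bound, a partial-vanishing refinement of Kempf's theorem (``rational $\Rightarrow$ Cohen--Macaulay''): if $X$ is a normal variety of dimension $n$ and $f\colon\widetilde X\to X$ is a resolution with $R^if_*\O_{\widetilde X}=0$ for $1\le i\le s$, then
$$\depth_x\O_X\ge\min(\dim_x X,\,s+2)\quad \text{for every }x\in X.$$
For $s=0$ this is Serre's $S_2$ condition (from normality), and for $s=1$ it is the $S_3$ statement used in \cite[Lemma~6.3]{PP25}. For general $s$ I would establish it by the same route: apply $R\Gamma_x$ to the distinguished triangle
$$\O_X\to Rf_*\O_{\widetilde X}\to C\xrightarrow{+1},$$
whose cone $C$ is concentrated in degrees $[s+1,n-1]$ by hypothesis, and then extract depth information from Grothendieck--Serre local duality together with Grauert--Riemenschneider vanishing $R^jf_*\omega_{\widetilde X}=0$ for $j\ge 1$ (applied to the dual triangle). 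Alternatively, one can appeal to a result of Koll\'ar--Kov\'acs relating partial vanishing of higher direct images to depth.

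Granting this depth bound, the reduction is immediate. For every closed point $x\in Z$, normality of $X$ forces $\dim_xX=n$, and hence $\depth_x\O_X\ge\min(n,s+2)$. The hypothesis $\dim Z\le s-k+2$ rearranges to $\dim Z+k\le s+2$; within the natural range $\dim Z\le n-k$ (which is implicit in Theorem~\ref{thm:lcdef-characterization2} since always $\depth_x\O_X\le n$, and which is also compatible with $\lcdef(X\sm Z)\le n-k$), this yields $\dim Z\le \depth_x\O_X-k$ for every closed point $x\in Z$. Combined with the remaining hypotheses of our statement --- namely $\lcdef(X\sm Z)\le n-k$ and property $L_{k-3}$ along $Z$ --- the ``if'' direction of Theorem~\ref{thm:lcdef-characterization2} then produces the conclusion $\lcdef(X)\le n-k$.

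The main obstacle is really only the depth bound itself. While $s=1$ is classical and the case $s\ge n-1$ reduces to Kempf's theorem (since then $R^if_*\O_{\widetilde X}=0$ for all $i\ge 1$ and $X$ has rational singularities), the intermediate range requires a careful tracking of local cohomology along the triangle, either via a spectral-sequence argument or via citation of the appropriate result on depth under partial rationality. Once the depth estimate is in hand, everything else is bookkeeping through Theorem~\ref{thm:lcdef-characterization2}.
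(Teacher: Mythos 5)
Your approach is genuinely different from the paper's: you propose reducing the statement to the ``if'' direction of Theorem~\ref{thm:lcdef-characterization2} by first upgrading the partial vanishing of $R^if_*\O_{\widetilde X}$ to a depth bound (an $S_{s+2}$ statement), whereas the paper's proof does not pass through any depth estimate beyond the $S_3$ case. Instead, the paper applies Corollary~\ref{cor:L-link}(ii) directly: Flenner's Lemma~\ref{lem:Flenner}, under the hypothesis $R^if_*\O_{\widetilde X}=0$ for $1\le i\le s$, forces the comparison map in the exponential sequence to vanish, yielding a \emph{surjection} $H^i_L(X,x)\twoheadrightarrow H^{i+1}(U\sm\{x\},\Z)$ for $1\le i\le s-1$ with no depth input. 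Property $L_{k-3}$ then gives the topological vanishing $\widetilde H^i(U\sm\{x\},\Q)=0$ for $2\le i\le k+d_S-2$ (here the hypothesis $\dim Z\le s-k+2$ is exactly what makes the required range fit inside $[1,s-1]$), the $S_3$/Dao--Takagi input $\lcdef(X)\le n-3$ handles the low degrees $i\le 1$, and Proposition~\ref{prop:lcdef-vanishing} finishes. The surjection is strictly weaker than the isomorphism appearing in Corollary~\ref{cor:L-link}(i), and that is precisely what allows the paper to avoid any depth hypothesis.

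Your reduction has two gaps. First, you correctly identify that the entire argument hinges on the depth bound $\depth_x\O_X\ge\min(\dim_xX,\,s+2)$, but you do not prove it, and the paper neither proves nor cites it for general $s$: it only uses the $s=1$ instance ($R^1f_*\O_{\widetilde X}=0\Rightarrow S_3$, citing Koll\'ar). Your sketch via $R\Gamma_x$ of the triangle $\O_X\to Rf_*\O_{\widetilde X}\to C$ together with Grauert--Riemenschneider and local duality is the right strategy and the bound is in fact true, but making it rigorous is a nontrivial piece of work that would need to be carried out or properly cited; it is not ``bookkeeping.'' Second, even granting the depth bound, the reduction fails to cover the whole range of the hypothesis. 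Theorem~\ref{thm:lcdef-characterization2} requires $\dim Z\le\depth_x\O_X-k$, and since $\depth_x\O_X\le n$, this forces $\dim Z\le n-k$. The hypothesis of the present theorem only gives $\dim Z\le s-k+2$, which exceeds $n-k$ as soon as $s>n-2$ (e.g.\ when $X$ has rational singularities). You flag this as the ``natural range'' but do not justify why $\dim Z\le n-k$ may be assumed; it is not a consequence of $\lcdef(X\sm Z)\le n-k$. The paper's direct argument via the surjection has no such constraint and handles this case without extra work.
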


\begin{rmk}
1) For example, when $X$ has rational singularities we can take $s = n$, $Z = {\rm Sing}(X)$, and any $k \le n+1 - \dim  {\rm Sing}(X)$.\footnote{While to apply the theorem one could also take $k \le n+2 - \dim  {\rm Sing}(X)$, in fact property $L_{n-1}$ is never satisfied, and similarly for $L_{n-2}$ except possibly when $X$ has isolated singularities.}

\noindent
2) The converse to the statement in Theorem \ref{thm:lcdef-characterization} is already contained in Theorem \ref{thm:lcdef-characterization2}, since in order to ensure it one needs the depth condition appearing as the hypothesis of that theorem.
\end{rmk}

\begin{rmk}[{\bf Ogus and Dao-Takagi theorems}]\label{rmk:DT}

Theorem \ref{thm:lcdef-characterization2}  can be used to inductively generalize the best-known results on bounding the local cohomological dimension, due to Ogus and Dao-Takagi:

\begin{enumerate}
\item \cite{Ogus73} If $\depth(\O_X) \ge 2$, then $\lcdef (X) \le n-2$.
\item \cite{DT16} If $\depth(\O_X) \ge 3$, then $\lcdef (X) \le n-3$.
\item \cite{DT16} If $\depth(\O_X) \ge 4$, then $\lcdef (X) \le n-4 \iff X$ has torsion local analytic Picard groups.
\end{enumerate}
Note that using Theorem \ref{thm:lcdef-characterization2}, by induction we obtain (1) $ \Rightarrow$ (2) $\Rightarrow$ (3). Indeed, we may first assume $X$ is quasi-projective. If $\depth(\O_X) \ge 3$, then $\depth(\O_Y) \ge 2$ for a general hyperplane section $Y\subset X$, and $\lcdef (Y) \le n-3$ by (1). Consequently, $\lcdef (X\sm Z) \le n-3$ for some finite subset $Z\subset X$. Since property $L_0$ along $Z$ is vacuous, Theorem \ref{thm:lcdef-characterization2} yields (1) $ \Rightarrow$ (2). Likewise, if $\depth(\O_X) \ge 4$, then $\lcdef (X\sm Z) \le n-4$ for some finite subset $Z\subset X$. Therefore, Theorem \ref{thm:lcdef-characterization2} yields (2) $\Rightarrow$ (3).

Note that the condition ``the local analytic Picard groups are torsion" is not stable under taking hyperplane sections. By contrast, when the singularities are rational, local analytic $\Q$-factoriality is stable under taking general hyperplane sections. Thus the same inductive pattern continues:

%%By \cite[Corollary 2.11]{Ogus73}, $\lcdef (X) \le n-2$ is equivalent to the fact that the link at every $x\in X$ is connected.
\end{rmk}

\begin{cor}
Let $X$ be an $n$-dimensional variety, $n \ge 5$, with rational locally analytically $\Q$-factorial singularities. Then $\lcdef (X) \le n -5$ if and only if $X$ has torsion local analytic gerbe groups. 

In particular, if $X$ is a fivefold, then $\lcdef (X) = 0$ $\iff$ $X$ has torsion local analytic gerbe groups. 
\end{cor}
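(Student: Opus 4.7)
The forward direction is a direct consequence of Theorem \ref{thm:lcdef-characterization2}. Since rational singularities are Cohen-Macaulay, $\depth_x(\O_X) = n$ for every closed point $x \in X$, so the condition $\dim\{x\} = 0 \le n - 5$ required by the theorem holds for $n \ge 5$. The hypothesis $\lcdef(X \smallsetminus \{x\}) \le n - 5$ follows from $\lcdef(X) \le n-5$ since the local cohomological defect cannot increase under passage to an open subset. Theorem \ref{thm:lcdef-characterization2}, applied with $k=5$ and $Z = \{x\}$, then gives property $L_2$ along $\{x\}$, which in particular forces $H^2_L(X, x) \otimes_\Z \Q = 0$; as $x$ was arbitrary, $X$ has torsion local analytic gerbe groups.

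For the reverse direction, the plan is to apply Theorem \ref{thm:lcdef-characterization} with $k=5$. Since $X$ has rational singularities, $R^i f_* \O_{\widetilde X} = 0$ for every $i \ge 1$, so we may take $s = n-1$. Let $Z$ be the locus where $X$ fails to be a rational homology manifold. By Corollary \ref{cor:rhm-fourfold-finite}, $Z$ is closed of codimension at least $4$, so $\dim Z \le n - 4 = s - k + 2$. Moreover $\lcdef(X \smallsetminus Z) = 0 \le n - 5$ because $X \smallsetminus Z$ is an RHM. The remaining and crucial step is to verify property $L_2$ along $Z$: for a suitable stratification $Z = \bigsqcup_S S$ and every $x \in S$, one needs $H^i_L(X,x)\otimes_\Z \Q = 0$ for $1 \le i \le 2 + d_S$.

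For zero-dimensional strata the target vanishing reduces directly to $\Q$-factoriality ($i=1$) and the torsion gerbe hypothesis ($i=2$). For strata $S$ of positive dimension $d$, at a generic point $x \in S$ the analytic germ of $X$ splits (up to equisingularity along $S$) as a product $T \times V$, where $T$ is a polydisk of dimension $d$ and $V$ is a transverse slice of dimension $n-d$. A K\"unneth/Mayer-Vietoris analysis of the punctured neighborhood then identifies the relevant $H^i_L(X,x) \otimes_\Z \Q$ with groups of the form $H^{i-d}_L(V, v_0) \otimes_\Z \Q$, reducing the vanishing in the range $i \le 2+d$ to torsion gerbe and $\Q$-factoriality on $V$. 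These in turn transfer from $X$: rational singularities and analytic $\Q$-factoriality pass to general transverse slices (cf.\ the proof of Corollary \ref{cor:rhm-fourfold-finite}), while the torsion gerbe property is inherited by invoking the generalization of Proposition \ref{prop: local defect of Q-factoriality} expressing $H^2_L(-, -)\otimes_\Z \Q$ in terms of stalk cohomology of the intersection complex, together with the restriction formula $\iota^* \IC_X \simeq \IC_Y[1]$ for a general hyperplane $\iota \colon Y \hookrightarrow X$.

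The main obstacle I expect is precisely this transversal inheritance of the torsion gerbe property, combined with the rigorous verification of the K\"unneth-type reduction in the holomorphic/analytic category; both steps ultimately rest on relating higher $H^i_L(-,-)\otimes_\Z \Q$ to stalk cohomologies of the intersection complex, in the spirit of Proposition \ref{prop: local defect of Q-factoriality}. Once this is in place, the concluding ``in particular'' statement for fivefolds is immediate: for $n=5$ the inequality $\lcdef(X)\le n-5$ collapses to $\lcdef(X)=0$.
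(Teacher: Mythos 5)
Your forward direction is correct and is essentially the paper's: apply Theorem~\ref{thm:lcdef-characterization2} with $Z=\{x\}$ (note $\depth_x\O_X=n$ by Cohen--Macaulayness, and $\lcdef(X\sm\{x\})\le\lcdef(X)$), read off property $L_2$ at $x$, and observe that $H^1_L(X,x)\otimes\Q=0$ is automatic from $\Q$-factoriality.

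The reverse direction, however, contains a genuine gap, one you flag yourself. You take $Z$ to be the non-RHM locus, which (for $n\ge 8$) may have positive dimension, and then you must verify property $L_2$ along $Z$, i.e.\ $H^i_L(X,x)\otimes\Q=0$ for $1\le i\le 2+d_S$ at points of positive-dimensional strata $S$. You propose to do this by a K\"unneth-type splitting of the germ as $T\times V$ and an identification $H^i_L(X,x)\otimes\Q\simeq H^{i-d_S}_L(V,v_0)\otimes\Q$. This identification is not established, and the exponent is off: by Lemma~\ref{lem:link-top} the topological shift across a stratum of dimension $d_S$ is $2d_S$, not $d_S$ (on the $\O^*$-level, for rational singularities, $H^i_L(X,x)\otimes\Q\simeq \widetilde H^{i+1-2d_S}(L(T_x,x),\Q)$ via Corollary~\ref{cor:L-link}). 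Correcting the shift actually makes the conditions at positive-dimensional strata \emph{weaker} rather than stronger, so the strategy may be salvageable, but as written the crucial step is asserted, not proved, and there is no genuine K\"unneth formula for $\O^*$-cohomology in this setting --- what is really needed is the transversal-slice/link argument underlying Proposition~\ref{prop:lcdef-vanishing}.

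The paper's proof sidesteps the positive-dimensional strata entirely: pass to a general hyperplane $Y$, which by the argument of Corollary~\ref{cor:rhm-fourfold-finite} again has rational, locally analytically $\Q$-factorial singularities, hence torsion local analytic Picard groups; since $\O_Y$ is Cohen--Macaulay of dimension $n-1\ge4$, Dao--Takagi (item (3) of Remark~\ref{rmk:DT}) gives $\lcdef(Y)\le n-5$. The Whitney-stratification/link mechanism (as in the chain (1)$\Rightarrow$(2)$\Rightarrow$(3) of Remark~\ref{rmk:DT}) then upgrades this to $\lcdef(X\sm Z)\le n-5$ for a \emph{finite} subset $Z$, and Theorem~\ref{thm:lcdef-characterization2} with $k=5$ and this finite $Z$ yields the equivalence directly, since property $L_2$ along a finite set is exactly ``torsion $H^1_L$ and $H^2_L$'' at those points. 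Replacing your choice of $Z$ by this finite set, and dropping the K\"unneth step in favor of the hyperplane--Dao--Takagi transfer, would close the gap.
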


\begin{proof}
We may assume that $X$ is quasi-projective. As in the proof of Corollary \ref{cor:rhm-fourfold-finite}, if $Y$ is a general hyperplane section of $X$, then $Y$ also has rational locally analytically $\Q$-factorial singularities. In particular, $Y$ has torsion local analytic Picard groups, which implies $\lcdef (Y) \le n-5$ by the Dao-Takagi theorem mentioned above.

Consequently, $\lcdef (X\sm Z) \le n-5$ for some finite subset $Z\subset X$. Therefore, Theorem \ref{thm:lcdef-characterization2} concludes the proof.
\end{proof}

At present, we do not know whether the condition “the local analytic gerbe groups are torsion” is stable under taking general hyperplane sections, nor how to replace it by a comparably useful assumption known to be stable. This is the main obstruction to continuing this argument in order to obtain similar ``higher" statements inductively.

\medskip

\noindent
{\bf RHM condition.}
Along the same lines, we can give a similar criterion for the RHM condition. The rough guiding principle is that the RHM condition in dimension $n-1$ should be the same as the $\lcdef (X) = 0$ condition in dimension $n$.

\begin{thm}\label{thm:RHM-characterization}
Let $X$ be an $n$-dimensional variety with rational singularities and $Z\subset X$ be a finite subset of closed points. Assume that $X\sm Z$ is an RHM. Then $X$ satisfies property $L_{n-2}$ along $Z$ if and only if $X$ is an RHM. 
\end{thm}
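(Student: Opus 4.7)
The plan is to exploit the fact that, under the hypotheses, the RHM-defect object $\K^\bullet_X$ from Section \ref{scn:RHM} is supported on the finite set $Z$. Hence $X$ is an RHM if and only if $\iota_x^* \K^\bullet_X = 0$ for each $x \in Z$. For such a point $x$, denote by $L_x$ its link, and note that since $X$ is an RHM in a punctured neighborhood of $x$, the Deligne construction yields $\iota_x^* \IC^H_X \simeq \tau_{\le -1} R\Gamma(L_x, \Q)[n]$. Feeding this into the long exact sequence associated to the defining triangle $\K^\bullet_X \to \Q^H_X[n] \to \IC^H_X$ produces the identifications
\[
\H^k(\iota_x^* \K^\bullet_X) \simeq H^{n+k-1}(L_x, \Q) \quad \mathrm{for} \quad -n+2 \le k \le 0,
\]
and zero otherwise. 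In other words, $X$ is an RHM at $x$ iff $H^j(L_x, \Q) = 0$ for all $1 \le j \le n-1$.

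Next, the plan is to relate these link cohomology groups to the invariants $H^i_L(X, x)$ through the exponential sequence $0 \to \Z \to \O \to \O^* \to 0$ on a small Stein neighborhood $U$ of $x$, using that $U \sm \{x\}$ retracts onto $L_x$. Since $X$ has rational, hence Cohen-Macaulay, singularities of dimension $n$, local cohomology forces $H^i(U \sm \{x\}, \O) \otimes \Q = 0$ for $1 \le i \le n-2$. The rationalized long exact sequence then yields clean isomorphisms
\[
H^i_L(X, x) \otimes \Q \simeq H^{i+1}(L_x, \Q) \quad \mathrm{for} \quad 1 \le i \le n-3,
\]
together with an injection $H^{n-2}_L(X, x) \otimes \Q \hookrightarrow H^{n-1}(L_x, \Q)$ whose image is the kernel of $H^{n-1}(L_x, \Q) \to H^{n-1}(U \sm \{x\}, \O) \otimes \C$. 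Moreover, the initial segment of the sequence, combined with the surjectivity of $\exp$ on units of the local analytic ring at $x$ and the vanishing $H^1(U \sm \{x\}, \O) \otimes \Q = 0$ (valid for $n \ge 3$; the case $n=2$ is the classical fact that rational surface singularities are RHMs), forces $H^1(L_x, \Q) = 0$ unconditionally.

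The decisive step is to show that the map $H^{n-1}(L_x, \Q) \to H^{n-1}(U \sm \{x\}, \O)$ vanishes for rational isolated singularities, so that the injection at $i = n-2$ becomes an isomorphism. This follows from condition $D_0$, which is implied by rational singularities by Proposition \ref{prop:krat*}: we have $\gr^F_0 \DR(\K^\bullet_X) = 0$, and hence $\gr^F_0$ of the mixed Hodge structure $\H^0(\iota_x^* \K^\bullet_X)$ also vanishes in Saito's Hodge filtration. Under the Saito-Steenbrink identification of this filtration with the classical MHS on the link, the coefficient-extension map factors through $H^{n-1}(L_x, \C)/F^1 = \gr^F_0 H^{n-1}(L_x, \C)$ and therefore vanishes. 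Putting everything together, property $L_{n-2}$ along $\{x\}$ is equivalent, in both directions, to $H^j(L_x, \Q) = 0$ for $2 \le j \le n-1$; combined with the automatic $H^1(L_x, \Q) = 0$, this matches exactly the RHM criterion from the first paragraph, and running over all $x \in Z$ completes the theorem.

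The principal technical obstacle is precisely this Hodge-theoretic identification: proving that the natural map $H^{n-1}(L_x, \C) \to H^{n-1}(U \sm \{x\}, \O)$ is governed by the vanishing of $\gr^F_0$ in Saito's MHM on $\K^\bullet_X$. A parallel alternative, in the spirit of Proposition \ref{prop: local defect of Q-factoriality}, is to establish directly a Flenner-type generalization
\[
H^i_L(X, x) \otimes \Q \simeq \H^{-n+i+1}(\iota_x^* \IC_X) \quad \mathrm{for} \quad 1 \le i \le n-2,
\]
for any variety with rational singularities, which bypasses the comparison with $\O$-cohomology at $i = n-2$ entirely.
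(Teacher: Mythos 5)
Your blueprint is essentially sound, and in fact the ``parallel alternative'' you mention at the very end---establishing the Flenner-type isomorphism
$H^i_L(X, x) \otimes \Q \simeq \H^{-n+i+1}(\iota_x^* \IC_X)$ for $1 \le i \le n-2$---is precisely the route the paper takes: this is the content of Proposition~\ref{prop:RHM-isolated}, built out of Flenner's Lemma~\ref{lem:Flenner} and Corollary~\ref{cor:L-link}, and the theorem then follows by translating the vanishing of the stalk of $\IC_X$ at $x$ into the link-cohomology criterion, exactly as in your opening paragraph. (One small point left implicit by both you and the paper: the degree $j = -n+1$, i.e.\ $H^1(L_x, \Q)$, needs its own vanishing argument; you supply one via the exponential sequence and depth, while the paper relies on $\lcdef \le n-3$ for Cohen--Macaulay $X$.)

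The route you put forward as primary, however, has a genuine gap. You want to conclude that $H^{n-1}(L_x, \C) \to H^{n-1}(U\sm\{x\}, \O)$ vanishes from $\gr^F_0 \DR(\K^\bullet_X) = 0$, via the assertion that this coefficient-extension map ``factors through $H^{n-1}(L_x,\C)/F^1$.'' The first half of your chain is fine: since $\K^\bullet_X$ is supported at the finite set $Z$ and $\gr^F\DR$ commutes with pushforward by a closed point, $\gr^F_0\DR(\K^\bullet_X)=0$ does give $\gr^F_0$ of the stalk Hodge structure $\H^0(\iota_x^*\K^\bullet_X)\simeq H^{n-1}(L_x,\Q)$ is zero. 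But the factoring claim is exactly what needs to be proved and is nontrivial: $U\sm\{x\}$ is smooth Stein but noncompact, the Frölicher spectral sequence does not degenerate there, and there is no a priori reason the edge map $H^{n-1}(U\sm\{x\},\C)\to H^{n-1}(U\sm\{x\},\O)$ kills the Saito/Steenbrink $F_{-1}$ on link cohomology. Establishing this would essentially re-prove Flenner's \cite[Lemma 6.2]{Flenner81} (= Lemma~\ref{lem:Flenner}), which is the cleaner and already-available tool---and which is needed precisely for the ``if'' direction at $i=n-2$, since the depth argument alone only gives an \emph{injection} $H^{n-2}_L(X,x) \hookrightarrow H^{n-1}(U\sm\{x\},\Z)$, not an isomorphism. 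So promote your ``parallel alternative'' to the main argument: it is the paper's proof.
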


Equivalently, when $X$ has rational singularities and is an RHM away from a point $x\in X$, then
$$
X \textrm{ is an RHM}\Longleftrightarrow
H_L^{\,i}(X,x)\otimes_{\mathbb Z}\mathbb Q=0\ \textrm{ for all } 1\le i\le n-2.
$$
In particular, if $X$ is an RHM, the above vanishing holds for every $x\in X$.

\medskip

\noindent
{\bf Proof of Theorems \ref{thm:lcdef-characterization2} and \ref{thm:lcdef-characterization}.}
Let $x \in X$, and assume that $x$ lies in a stratum $S$ of dimension $d_S$ of a Whitney stratification of $X$. If $T_x$ is a transversal slice
to $S$ at $x$, we can look at the link $L (T_x, x)$ of $T_x$ at $x$; it is well known that its cohomology depends only on the stratum $S$. Consider also a small analytic neighborhood $U$ of $x$ in $X$, which can be taken to be Stein and of product type, meaning of the
form $U = V \times B$, where $V$ is a (contractible) neighborhood of $x$ in $T_x$, and $B$ is a ball in $S$; see e.g. the proof of \cite[Theorem 4.1.10 or 5.1.20]{Dim04}. In particular, $U$ is contractible.

\begin{lem}\label{lem:link-top}
For every integer $i$, we have the following isomorphisms
$$\widetilde H^i (U  \sm \{x\}; \Q) \simeq \widetilde{H}^{i - 2d_S} (L(T_x, x), \Q) \simeq \widetilde H^{i-2d_S} (U  \sm S; \Q).$$
\end{lem}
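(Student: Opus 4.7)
The plan is to exploit the product structure $U = V \times B$ furnished by the stratification setup, where under the identification $x \leftrightarrow (v_0,0)$ we have $S \cap U = \{v_0\} \times B$, so that in particular $V$, $B$, and $U$ are contractible. Since we work with $\Q$-coefficients, the K\"unneth formula has no Tor corrections, and everything reduces to a careful bookkeeping of shifts.

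For the first isomorphism, I would begin with the long exact sequence of the pair $(U, U \smallsetminus \{x\})$. Since $U$ is contractible, $H^{\ge 1}(U,\Q)=0$, which gives
$$\widetilde H^i(U \smallsetminus \{x\};\Q) \simeq H^{i+1}(U, U \smallsetminus \{x\};\Q).$$
Next I would apply the relative K\"unneth formula to the product of pairs $(V,V\smallsetminus\{v_0\}) \times (B, B \smallsetminus \{0\})$, whose product as a pair is precisely $(U, U \smallsetminus \{x\})$. Since $B$ is a real $2d_S$-dimensional ball and $B\smallsetminus\{0\} \simeq S^{2d_S-1}$, the relative cohomology $H^\bullet(B,B\smallsetminus\{0\};\Q)$ is one-dimensional, concentrated in degree $2d_S$. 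Thus K\"unneth produces exactly the stated shift
$$H^{i+1}(U,U\smallsetminus\{x\};\Q) \simeq H^{i+1-2d_S}(V, V \smallsetminus\{v_0\};\Q).$$
Running the long exact sequence backwards for the contractible space $V$ converts the right-hand side to $\widetilde H^{i-2d_S}(V \smallsetminus \{v_0\};\Q)$, and the standard deformation retraction of a small punctured neighborhood in the transversal slice $T_x$ onto its link identifies this with $\widetilde H^{i-2d_S}(L(T_x,x);\Q)$.

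For the second isomorphism, I would simply observe that $U \smallsetminus S = (V \smallsetminus\{v_0\}) \times B$ as sets, because $S \cap U = \{v_0\} \times B$. Since $B$ is contractible, the first projection is a homotopy equivalence $U \smallsetminus S \simeq V \smallsetminus \{v_0\}$, and the same deformation retraction onto the link used above yields $V\smallsetminus\{v_0\} \simeq L(T_x,x)$ homotopically, so the two cohomology groups coincide in every degree. The main potentially subtle input is the existence of the product-type Stein neighborhood $U = V \times B$ adapted to the stratification (together with the identification $S \cap U = \{v_0\}\times B$), which is precisely what is quoted from \cite[Theorem 4.1.10, 5.1.20]{Dim04}; once this is granted, no serious obstacle remains, and the proof is essentially an application of relative K\"unneth plus the cofiber sequence. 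The hardest bookkeeping point is ensuring that the shift by $2d_S$ (the real dimension of the stratum) lands correctly on both sides of the claimed isomorphisms.
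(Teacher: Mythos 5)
Your proof is correct, and the underlying idea — exploit the product structure $U = V \times B$ — is exactly what makes the cited references work; the degree shift comes from the top relative cohomology of $(B, B \smallsetminus \{0\})$ being concentrated in degree $2d_S$. The difference is one of exposition: the paper reduces both statements to relative cohomology of pairs $(U, U \smallsetminus \{x\})$ and $(U, U \smallsetminus S)$ (using contractibility of $U$) and then cites \cite[Theorem 5.1.20]{Dim04} and \cite[Proposition 6.6.2]{Maxim19} for the standard identifications with shifted link cohomology, whereas you carry out the computation yourself via the relative K\"unneth formula for the product of pairs $(V, V\smallsetminus\{v_0\}) \times (B, B\smallsetminus\{0\})$, and for the second isomorphism you bypass the pair formulation entirely by noting directly that $U \smallsetminus S = (V\smallsetminus\{v_0\}) \times B$ is homotopy equivalent to $V\smallsetminus\{v_0\}$ and hence to the link. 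This makes the argument more self-contained, at the cost of invoking the local conic structure (the retraction of the punctured transversal slice onto $L(T_x,x)$) and the existence of the product-type neighborhood, both of which are part of the Whitney/Thom--Mather setup that the paper likewise takes as given. No gap.
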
 
\begin{proof}
Since $U$ is contractible, we have isomorphisms
$$\widetilde H^i (U \sm \{x\}, \Q) \simeq H^{i+1} (U, U\sm \{x\}; \Q)$$
and similarly with $S$ instead of $\{x\}$. In this latter form, the statements are standard; the one for $S$ appears for instance in the course of the proof of \cite[Theorem 5.1.20]{Dim04} (with $T^*$ in \emph{loc. cit.} being homotopy equivalent to our $L( T_x, x)$), while the one for $x$ appears for instance in the course of the proof of \cite[Proposition 6.6.2]{Maxim19}.
\end{proof}

%% Note that $\widetilde H^i (U  \sm \{x\}; \Q)$ can be identified with $H^{i+1}_x(U)=H^{i+1}_x(B,\iota^!\Q_U)$ where $\iota:\{0\}\times B \hookrightarrow U$. The cohomology $\H^j(\iota^!\Q)$ is a local system on $B$ whose fibers are $\widetilde H^{j-1}(L(T_x,x),\Q)$. Note that the local cohomology of a local system $V$ on $B$ is zero, excluding $H_x^{2d_S}(B,V)=V_x$. This implies $H^{i+1}_x(U)=\widetilde H^{i-2d_S}(L(T_x,x),\Q).$

We use the following interpretation of the topological characterization of the local cohomological dimension in Theorem \ref{thm:lcdef-top},  in terms of link cohomology:

\begin{thm}[{\cite[Corollary 3]{RSW23}}]\label{thm:RSW}
For every $k\ge 0$, the condition $\lcdef (X) \le n - k$ is equivalent to the vanishing
$$\widetilde{H}^i  (L(T_x, x), \Q) = 0 \,\,\,\,\,\,{\rm for~all}\,\,\,\,i \le k - d_S - 2,$$
for every stratum $S$ of a Whitney stratification of $X$, and any $x \in S$, with the notation above.
\end{thm}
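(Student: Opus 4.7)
\textbf{Proof proposal for Theorem \ref{thm:RSW}.} The plan is to reduce the assertion to a standard (co)stalk characterization of the perverse $t$-structure on $\Q_X[n]$, and then compute the costalks explicitly in terms of link cohomology via the product structure of a Whitney neighborhood, combined with Lemma \ref{lem:link-top}. Throughout I will use that $\Q_X[n]$ is constructible with respect to any Whitney stratification of $X$.

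First, by Theorem \ref{thm:lcdef-top} the inequality $\lcdef(X)\le n-k$ is equivalent to ${}^p\H^{-j}(\Q_X[n])=0$ for all $j\ge n-k+1$, which is exactly the statement $\Q_X[n]\in{}^pD^{\ge k-n}_c(X,\Q)$. By the standard costalk criterion for middle-perversity constructibility (this is essentially the definition of the perverse $t$-structure, once we know $\Q_X[n]$ is constructible along the stratification), the latter is equivalent to
\[
i_S^!\,\Q_X[n]\in D^{\ge k-n-d_S}(S,\Q)\quad\text{for every stratum }S,
\]
that is, $\H^j(i_S^!\Q_X[n])=0$ for $j<k-n-d_S$ and every stratum $S$.

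Next, I will compute the stalks of $i_S^!\Q_X[n]$ at a point $x\in S$. Since $S$ is smooth of complex dimension $d_S$, for the inclusion $i_{\{x\}}:\{x\}\hookrightarrow S$ we have the smooth purity isomorphism $i_{\{x\}}^!=i_{\{x\}}^*[-2d_S]$, so
\[
\bigl(i_S^!\Q_X[n]\bigr)_x\;\simeq\;i_{\{x\},X}^!\Q_X[n]\,[2d_S].
\]
The right-hand side is local cohomology with support at $x$: if $U$ is a contractible Stein product neighborhood $U=V\times B$ as before Lemma~\ref{lem:link-top}, then $\H^l(i_{\{x\},X}^!\Q_X[n])\simeq H^{l+n}_{\{x\}}(U,\Q)$. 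The long exact sequence of the pair $(U,U\sm\{x\})$ combined with contractibility of $U$ gives $H^{m}_{\{x\}}(U,\Q)\simeq \widetilde H^{m-1}(U\sm\{x\},\Q)$ for $m\ge 1$, and Lemma \ref{lem:link-top} then identifies this with $\widetilde H^{m-1-2d_S}(L(T_x,x),\Q)$. Putting the shifts together yields
\[
\H^j\!\bigl((i_S^!\Q_X[n])_x\bigr)\;\simeq\;\widetilde H^{\,j+n-1}(L(T_x,x),\Q).
\]

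Finally, substituting this identification into the vanishing $\H^j(i_S^!\Q_X[n])=0$ for $j<k-n-d_S$ and setting $i=j+n-1$ translates the condition into
\[
\widetilde H^{\,i}(L(T_x,x),\Q)=0\quad\text{for all }i\le k-d_S-2,
\]
which is exactly the stated vanishing, and the equivalence holds for each stratum independently. The main obstacles are bookkeeping the various shifts (especially the $[2d_S]$ purity shift for the smooth inclusion of a point into the stratum, and the degree shift in local cohomology vs.\ link cohomology), and invoking the correct convention for middle perversity using \emph{complex} stratum dimensions $d_S$; once these are pinned down, each step is standard and the identification is forced.
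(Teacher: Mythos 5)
Your argument is correct. A point worth flagging first: the paper does not prove Theorem \ref{thm:RSW} at all --- it is quoted verbatim from \cite[Corollary 3]{RSW23} --- so there is no internal proof to compare against; what you have written is a self-contained derivation of the cited result. Your route is the standard one and the bookkeeping checks out: $\lcdef(X)\le n-k$ is, via Theorem \ref{thm:lcdef-top}, exactly $\Q_X[n]\in{}^p\!D^{\ge k-n}_c(X,\Q)$; the costalk criterion for ${}^p\!D^{\ge a}$ (legitimate here since $\Q_X[n]$ is constructible with respect to any Whitney stratification, and since the cohomology sheaves of $i_S^!\Q_X[n]$ are locally constant on $S$, so vanishing can be tested at each point) gives $\H^j(i_S^!\Q_X[n])=0$ for $j<k-n-d_S$; the purity shift $i_{\{x\},S}^!=i_{\{x\},S}^*[-2d_S]$ converts the costalk along $S$ into $i_{\{x\},X}^!\Q_X[n][2d_S]$, whose cohomology is local cohomology $H^{\ast}_{\{x\}}(U,\Q)$; and the pair sequence for the contractible $U$ plus Lemma \ref{lem:link-top} turns this into $\widetilde H^{\,j+n-1}(L(T_x,x),\Q)$, so that $j\le k-n-d_S-1$ becomes $i\le k-d_S-2$. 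I verified each shift independently and they all match the stated range. Two minor points you could make explicit: the identification $H^m_{\{x\}}(U,\Q)\simeq\widetilde H^{m-1}(U\sm\{x\},\Q)$ requires $m\ge 1$ (both sides vanish for $m\le 0$ when $n\ge 1$, so nothing is lost), and the independence of the statement from the choice of Whitney stratification is automatic because the left-hand side $\lcdef(X)\le n-k$ is intrinsic.
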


According to Lemma \ref{lem:link-top}, this can be reinterpreted as follows:

\begin{cor}\label{cor:lcdef-vanishing}
For every $k\ge 0$, the condition $\lcdef (X) \le n - k$ is equivalent to the vanishing
$$\widetilde H^i  (U  \sm \{x\}, \Q) = 0 \,\,\,\,\,\,{\rm for~all}\,\,\,\,i \le k + d_S - 2,$$
for every stratum $S$ of a Whitney stratification of $X$, and any $x \in S$, with the notation above.
\end{cor}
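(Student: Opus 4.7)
The corollary is essentially a combinatorial reindexing, so the plan is to feed the isomorphism of Lemma \ref{lem:link-top} directly into the vanishing criterion of Theorem \ref{thm:RSW}. Concretely, fix a Whitney stratification of $X$, a stratum $S$ of dimension $d_S$, and a point $x \in S$ with associated transversal slice $T_x$ and Stein product neighborhood $U$. Lemma \ref{lem:link-top} provides a canonical isomorphism
\[
\widetilde H^{\,j}(U\smallsetminus\{x\},\Q) \;\isom\; \widetilde H^{\,j-2d_S}(L(T_x,x),\Q)
\]
for every integer $j$.

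With this in hand, I would simply reparametrize. Theorem \ref{thm:RSW} asserts that $\lcdef(X)\le n-k$ is equivalent to
\[
\widetilde H^{\,\ell}(L(T_x,x),\Q)=0 \quad \text{for all } \ell \le k-d_S-2,
\]
holding for every stratum $S$ and $x\in S$. Setting $\ell = j - 2d_S$, this vanishing becomes $\widetilde H^{\,j-2d_S}(L(T_x,x),\Q)=0$ for all $j-2d_S\le k-d_S-2$, i.e. $j\le k+d_S-2$. Applying Lemma \ref{lem:link-top} translates this verbatim into
\[
\widetilde H^{\,j}(U\smallsetminus\{x\},\Q)=0 \quad \text{for all } j\le k+d_S-2,
\]
which is the stated condition.

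Since the quantifier ``for every stratum $S$ and every $x\in S$'' is preserved under this substitution, the two conditions are equivalent, and the corollary follows. There is no real obstacle here beyond bookkeeping: the only subtlety worth double-checking is that the neighborhood $U$ used in Lemma \ref{lem:link-top} is Stein of product type, so that the cohomology of $U\smallsetminus\{x\}$ genuinely computes the desired invariant independently of the choice of $U$ (which is why the condition makes sense as a direct limit over shrinking neighborhoods, matching the usage elsewhere in the Appendix).
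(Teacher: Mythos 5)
Your proposal is correct and is exactly the paper's argument: the paper states the corollary immediately after Lemma~\ref{lem:link-top} with the one-line remark that the lemma allows Theorem~\ref{thm:RSW} to be ``reinterpreted,'' and your reindexing $\ell = j - 2d_S$ carrying $\ell \le k - d_S - 2$ to $j \le k + d_S - 2$ is precisely that reinterpretation.
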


The main distinction between Theorem \ref{thm:RSW} and Corollary \ref{cor:lcdef-vanishing} is that the sign of $d_S$ is reversed. Since every stratification can be refined
 to a Whitney stratification, this allows one to check the vanishing condition for an arbitrary stratification, not necessarily Whitney:

\begin{prop}\label{prop:lcdef-vanishing}
Let $Z$ be a closed subset of $X$ with  $\lcdef(X\sm Z)\le n-k$ for some $k\ge 0$. Then $\lcdef (X) \le n-k$ if and only if there exists a stratification $Z=\sqcup_S S$ such that for every stratum $S$ and every $x\in S$, and for $U$ a sufficiently small Stein neighborhood of $x$, one has the vanishing
$$\widetilde H^i  (U  \sm \{x\}, \Q) = 0 \,\,\,\,\,\,{\rm for~all}\,\,\,\,i \le k + d_S - 2.$$
\end{prop}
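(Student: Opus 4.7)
The strategy is to reduce the statement to Corollary \ref{cor:lcdef-vanishing}, which gives the same characterization but only for Whitney stratifications of $X$, by passing through an appropriate refinement of the given stratification.

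For the forward direction, suppose $\lcdef(X) \le n-k$. I would choose a Whitney stratification of $X$ compatible with the closed subset $Z$, so that $Z$ is a union of strata; such a stratification exists by standard facts. Its restriction to $Z$ gives the desired stratification, and the stated vanishing of $\widetilde H^i(U\sm \{x\}, \Q)$ for $i \le k + d_S - 2$ follows directly from Corollary \ref{cor:lcdef-vanishing}.

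For the converse, assume the stratification $Z = \sqcup_S S$ satisfies the vanishing hypothesis. I would refine it to a Whitney stratification $\mathcal W_Z$ of $Z$, and then extend $\mathcal W_Z$ to a Whitney stratification $\mathcal W$ of $X$, which in particular restricts to a Whitney stratification of $X \sm Z$. By construction, every stratum $S'$ of $\mathcal W$ either lies in $X \sm Z$, or is contained in a unique stratum $S$ of the original stratification of $Z$, and in the latter case $d_{S'} \le d_S$. If $S' \subset X \sm Z$, then the required vanishing at any $x' \in S'$ follows from the hypothesis $\lcdef(X \sm Z) \le n-k$ via Corollary \ref{cor:lcdef-vanishing} applied to $X\sm Z$ with its inherited Whitney stratification. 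If $S' \subset S \subset Z$, then for any $x' \in S'$ the hypothesis gives the vanishing $\widetilde H^i(U\sm \{x'\}, \Q) = 0$ for $i \le k + d_S - 2$, which in particular covers the range $i \le k + d_{S'} - 2$ because $d_S \ge d_{S'}$. Applying Corollary \ref{cor:lcdef-vanishing} to $X$ endowed with $\mathcal W$ then yields $\lcdef(X) \le n-k$.

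The main obstacle lies in the stratification-theoretic input: one must ensure that a Whitney refinement of the given stratification of $Z$ can indeed be extended to a Whitney stratification of $X$ (with $X\sm Z$ automatically a union of strata), and that the nesting of new strata into old ones satisfies $d_{S'} \le d_S$. These are classical facts due to Whitney, Thom, and Mather, but they are the technical linchpin that permits the reduction from the arbitrary stratification appearing in the statement to the Whitney-stratified setting handled by Corollary \ref{cor:lcdef-vanishing}.
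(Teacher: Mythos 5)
Your proof is correct and follows essentially the same route as the paper: both directions are reduced to Corollary~\ref{cor:lcdef-vanishing} by refining the given stratification to a Whitney stratification and using the monotonicity $S' \subset S \Rightarrow d_{S'} \le d_S$. The only minor difference is cosmetic: the paper refines the prestratification $X = (X \sm Z) \sqcup (\sqcup_S S)$ in a single step, whereas you go through an intermediate Whitney stratification of $Z$; and you spell out explicitly that the vanishing on strata inside $X \sm Z$ comes from the hypothesis $\lcdef(X \sm Z) \le n - k$, a point the paper leaves implicit.
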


\begin{proof}
Suppose $\lcdef (X) \le n-k$. Then a Whitney stratification that refines the stratification $X=(X\sm Z)\sqcup Z$ induces a stratification $Z=\sqcup_S S$ with the desired vanishing by Corollary \ref{cor:lcdef-vanishing}. Conversely, given a stratification $Z=\sqcup_S S$ with the vanishing, there exists a finer Whitney stratification $X=\sqcup_{S'} S'$ of $X=(X\sm Z)\sqcup (\sqcup_S S)$. Since $S'\subset S$ implies $d_{S'}\le d_S$, this finer Whitney stratification satisfies the vanishing in Corollary \ref{cor:lcdef-vanishing}.
\end{proof}

Next we compare the topological invariants with the local cohomological invariants. We use a standard method based on the exponential sequence, 
as well as the following important observation due to Flenner, which is a slight rephrasing of \cite[Lemma 6.2]{Flenner81}.

\begin{lem}\label{lem:Flenner}
Let $X$ be a normal complex analytic variety, and suppose that for a resolution of singularities $f \colon \widetilde{X} \to X$ we have $R^i f_* \O_{\widetilde{X}} = 0$ for all $1 \le i \le s$. 
Let $U$ be a Stein open subset of $X$, and $Y$ a closed subset of $U$. Then the natural map
$$H^i (U \sm Y, \Z) \to H^i (U \sm Y, \O)$$
is the zero map, for every $1\le i\le s$. In particular, if $X$ has rational singularities, then this holds for all $i\ge 1$.
\end{lem}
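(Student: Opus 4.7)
The plan is to adapt Flenner's original argument in \cite[Lemma 6.2]{Flenner81} to the slightly more general setup stated here (arbitrary closed $Y$ in place of a single point, and the partial-vanishing hypothesis $R^{i}f_*\O_{\widetilde X}=0$ for $1\le i\le s$ in place of rational singularities). Set $V := U \sm Y$ and $\widetilde V := f^{-1}(V)$, so that the restriction $f|_{\widetilde V}\colon \widetilde V \to V$ is proper and birational, and $\widetilde V$ is smooth as an open subset of $\widetilde X$. The natural sheaf map $\Z\to \O$ is compatible with pullback along $f$, yielding the commutative square
$$
\begin{array}{ccc}
H^i(V, \Z) & \longrightarrow & H^i(V, \O_V) \\
\downarrow & & \downarrow \\
H^i(\widetilde V, \Z) & \longrightarrow & H^i(\widetilde V, \O_{\widetilde V})
\end{array}
$$
in which the vertical arrows are $f^*$.

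First I would show that the right vertical arrow is an isomorphism for $0\le i\le s$. Indeed, by normality $f_*\O_{\widetilde X}=\O_X$, and by hypothesis $R^{q}f_*\O_{\widetilde X}=0$ for $1\le q\le s$; restricting to $V$, the truncation $\tau_{\le s}R(f|_{\widetilde V})_*\O_{\widetilde V}$ is quasi-isomorphic to $\O_V$. Hence in the Leray spectral sequence
$$E_2^{p,q} = H^p\big(V, R^q(f|_{\widetilde V})_* \O_{\widetilde V}\big) \Longrightarrow H^{p+q}(\widetilde V, \O_{\widetilde V}),$$
only the $q=0$ row contributes in total degree $\le s$, giving the isomorphism. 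With this in hand, vanishing of the top horizontal arrow becomes equivalent to vanishing of the composite $H^i(V, \Z)\xrightarrow{f^*} H^i(\widetilde V, \Z) \to H^i(\widetilde V, \O_{\widetilde V})$ for $1\le i\le s$.

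Since $\widetilde V$ is smooth, the map $H^i(\widetilde V, \Z)\to H^i(\widetilde V, \O_{\widetilde V})$ factors through $H^i(\widetilde V, \C)$ and agrees with the edge map of the Hodge-to-de Rham spectral sequence, i.e., the projection $H^i(\widetilde V, \C)\twoheadrightarrow H^i(\widetilde V, \C)/F^1$. The remaining step -- which is the main obstacle and the content of Flenner's original argument -- is to show that the pullback image $f^{*}H^i(V, \C)\subset H^i(\widetilde V, \C)$ is contained in $F^1 H^i(\widetilde V, \C)$. Flenner handles this by passing to a suitable (local) compactification and invoking the fact that $f^{*}$ is a morphism of mixed Hodge structures; the crucial input is that, within the degree range imposed by the partial-rationality hypothesis, the Hodge filtration on $H^i(V, \C)$ is such that the quotient $F^0/F^1 \cong \HH^i(V, \DB_V^{0})$ is controlled by the pullback to $\widetilde V$, forcing the desired containment. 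The final clause of the lemma is then immediate: rational singularities are precisely the case in which one may take $s$ arbitrarily large.
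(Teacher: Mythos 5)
The paper does not prove this lemma --- it states it as ``a slight rephrasing of \cite[Lemma 6.2]{Flenner81}'' and leaves the proof to Flenner's original paper. So there is no in-paper argument to compare against, and the assessment must be of your reconstruction alone.

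Your first two steps are correct and are the standard reduction: the Leray spectral sequence for $f|_{\widetilde V}$ together with normality ($f_*\O_{\widetilde X}=\O_X$) and the vanishing $R^q f_*\O_{\widetilde X}=0$ for $1\le q\le s$ gives $H^p(V,\O_V)\simeq H^p(\widetilde V,\O_{\widetilde V})$ for $p\le s$, and the identification of $\ker\big(H^i(\widetilde V,\C)\to H^i(\widetilde V,\O_{\widetilde V})\big)$ with the na\"ive $F^1$ coming from the stupid filtration on $\Omega^\bullet_{\widetilde V}$ is also correct. But the entire mathematical content of the lemma is the ``remaining step,'' and you have not proved it; you have only gestured at Flenner's argument without carrying it out. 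Worse, the sketch you give is not accurate. The $F^1$ that appears as the kernel of the edge map is the na\"ive filtration on the analytic de~Rham complex of the non-compact manifold $\widetilde V$, which is not a priori the same as the Deligne filtration on a mixed Hodge structure, and $\widetilde V$, $V$ are arbitrary analytic open sets, so it is not even automatic that they carry Deligne mixed Hodge structures on which $f^*$ is a filtered morphism. Moreover, the identification $F^0/F^1 H^i(V,\C)\cong \HH^i(V,\DB_V^0)$ that you invoke is false for non-proper $V$: for a non-proper variety the Deligne Hodge filtration is computed on a compactification (via the logarithmic or filtered Du~Bois complex of a compactification), not by the hypercohomology of the Du~Bois complex of $V$ itself, and the Hodge-to-de~Rham spectral sequence on $V$ need not degenerate. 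Finally, even granting all of these identifications, the argument as sketched seems to lead to the isomorphism $H^i(V,\O_V)\simeq H^i(\widetilde V,\O_{\widetilde V})$ from your Step~1 rather than to the vanishing you need, so it is not clear that the ``forcing the desired containment'' conclusion follows; the containment $f^*H^i(V,\C)\subset F^1$ is precisely equivalent to the statement of the lemma, and you have not supplied an independent reason for it. Since the key input from the partial-rationality hypothesis beyond the Leray step is exactly what is missing, I would regard this as a genuine gap: what you have is a correct reduction plus a citation, not a proof.
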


\begin{cor}\label{cor:L-link}
Let $x\in X$ be a point in a complex analytic variety $X$, and let $U\subset X$ be a sufficiently small Stein neighborhood of $x$.

\noindent
(i) If $1\le k\le {\rm depth}_{x} \O_X-3$, then we have an isomorphism
$$ H^k_L (X, x) \simeq H^{k + 1} (U \setminus \{x\}, \Z)$$
and a natural inclusion
$$ H^{k+1}_L (X, x) \hookrightarrow H^{k + 2} (U \setminus \{x\}, \Z).$$

\noindent
(ii) Suppose $X$ is normal and $R^i f_*\O_{\widetilde X}=0$ for all $1\le i\le s$, where $f:\widetilde X\to X$ is a resolution of singularities. Then for $1\le i \le s-1$ there is a short exact sequence
$$0 \to H^i (U \sm \{x\}, \O) \to H^i_L( X, x) \to  H^{i +1}  (U  \sm \{x\}, \Z) \to 0.$$
If $X$ has rational singularities, this holds for all $i\ge 1$, and in particular 
$$ H^i_L (X, x) \simeq H^{i + 1}  (U \setminus \{x\}, \Z) \,\,\,\,\,\,{\rm for} \,\,\,\,1\le i \le n-2.$$
\end{cor}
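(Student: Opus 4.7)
The approach will center on the exponential exact sequence
\[0 \to \Z \to \O \to \O^* \to 0\]
restricted to $U\sm\{x\}$ for a sufficiently small Stein neighborhood $U$ of $x$, together with its associated long exact sequence in sheaf cohomology. Passing to the direct limit over shrinking $U$ (which is exact, and which stabilizes for $U$ small enough) translates $\varinjlim_U H^i(U\sm\{x\},\O^*)$ into $H^i_L(X,x)$. Both parts then reduce to controlling the analytic term $H^i(U\sm\{x\},\O)$ and the connecting maps involving $H^\bullet(U\sm\{x\},\Z)$.

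For part (i), the first key step will be the vanishing $H^i(U\sm\{x\},\O)=0$ for $1\le i\le \depth_x\O_X-2$. This follows from the Stein vanishing $H^i(U,\O)=0$ for $i\ge 1$ combined with the local-cohomology long exact sequence for the pair $(U,\{x\})$, which identifies $H^i(U\sm\{x\},\O)\simeq H^{i+1}_{\{x\}}(U,\O)$ for $i\ge 1$, together with the definition of depth as the first non-vanishing local cohomology degree. Plugging this into the exponential long exact sequence gives the desired isomorphism $H^k_L(X,x)\simeq H^{k+1}(U\sm\{x\},\Z)$ as soon as both $H^k(U\sm\{x\},\O)$ and $H^{k+1}(U\sm\{x\},\O)$ vanish, i.e.\ for $1\le k\le\depth_x\O_X-3$; the injection $H^{k+1}_L(X,x)\hookrightarrow H^{k+2}(U\sm\{x\},\Z)$ then requires only $H^{k+1}(U\sm\{x\},\O)=0$, which holds in the same range.

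For part (ii), I will invoke Lemma \ref{lem:Flenner}: the hypothesis $R^if_*\O_{\widetilde X}=0$ for $1\le i\le s$ forces the natural map $H^i(U\sm\{x\},\Z)\to H^i(U\sm\{x\},\O)$ to be zero for $1\le i\le s$. Consequently the exponential long exact sequence splits into short exact sequences
\[0\to H^i(U\sm\{x\},\O)\to H^i(U\sm\{x\},\O^*)\to H^{i+1}(U\sm\{x\},\Z)\to 0\]
whenever the maps in degrees $i$ and $i+1$ both vanish, i.e.\ for $1\le i\le s-1$, and passage to the direct limit yields the claimed SES. For the rational singularities case one has simultaneously $R^if_*\O_{\widetilde X}=0$ for all $i\ge 1$ (so the SES holds for every $i\ge 1$) and, since rational singularities are Cohen--Macaulay of dimension $n$, $\depth_x\O_X=n$; by the argument of part (i) this gives $H^i(U\sm\{x\},\O)=0$ for $1\le i\le n-2$, collapsing the SES to the stated isomorphism.

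The main obstacle, such as it is, is purely bookkeeping: verifying that the direct limit defining $H^i_L(X,x)$ is compatible with the exponential long exact sequence, and that sufficiently small Stein neighborhoods form a cofinal system along which every term stabilizes. Beyond this, no deeper input is required than the exponential sequence, the Stein vanishing of $\O$, the characterization of depth through local cohomology, and the already-cited Flenner Lemma \ref{lem:Flenner}.
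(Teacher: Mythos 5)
Your proof is correct and follows essentially the same route as the paper: the exponential sequence on $U\sm\{x\}$, the local cohomology exact sequence for $(U,\{x\})$ together with Stein vanishing of $H^{\ge 1}(U,\O)$ and the depth characterization to kill $H^i(U\sm\{x\},\O)$ in the relevant range for (i), and Flenner's Lemma \ref{lem:Flenner} with $Y=\{x\}$ plus Cohen–Macaulayness of rational singularities for (ii). The only difference is cosmetic: the paper reads off $H^i(U\sm\{x\},\O)\simeq H^i(U,\O)=0$ using the vanishing of $H^j_{\{x\}}(U,\O)$ for $j\le k+2$, whereas you identify $H^i(U\sm\{x\},\O)\simeq H^{i+1}_{\{x\}}(U,\O)$ and then apply the depth bound — both are the same local-cohomology long exact sequence — and you make explicit the (routine) stabilization of the direct limit defining $H^i_L(X,x)$, which the paper absorbs into the phrase ``sufficiently small Stein neighborhood.''
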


\begin{proof} 
We consider the cohomology exact sequence associated to the exponential sequence on $U \setminus \{x\}$:
$$\cdots \to H^i (U \setminus \{x\}, \O) \to H^i (U \setminus \{x\}, \O^*) \to H^{i+1}  (U \setminus \{x\}, \Z) \to H^{i + 1} (U \setminus \{x\}, \O)  \to  \cdots.$$
On the other hand, we have the usual exact sequence for local cohomology:
$$\cdots \to H^i_{\left\{x\right\}} (U, \O) \to H^i (U, \O) \to H^i (U \setminus \{x\}, \O) \to H^{i+1}_{\{x\}} (U, \O) \to \cdots.$$
By \cite[Theorem 1.14]{ST71}, the depth condition implies that $H^{i}_{\left\{x\right\}}(U,\O_U)=0$ for $i\le k+2$. This gives
$$
H^{i}\left(U\sm \left\{x\right\},\O\right) \simeq H^{i}(U,\O),
$$
for $1\le i\le k+1$, which is zero since $U$ is Stein. Plugging this into the first sequence gives (i).

For (ii) we use in addition Lemma \ref{lem:Flenner}, with $Y = \{x\}$, and the fact that a variety with rational singularities has maximal depth. 
\end{proof}

We now prove Theorems \ref{thm:lcdef-characterization2} and \ref{thm:lcdef-characterization}.

\begin{proof}[{Proof of Theorem \ref{thm:lcdef-characterization2}}]
From the assumption, we have $\depth_x(\O_X)\ge \dim Z+k$ for every point $x\in Z$, and Corollary \ref{cor:L-link} implies that
$$
H^i_L (X, x) \simeq H^{i + 1} (U \setminus \{x\}, \Z)
$$
for all $1\le i\le \dim Z+k-3$. Therefore, the vanishing in Proposition \ref{prop:lcdef-vanishing} is equivalent to the vanishing
$$
H^i_L (X, x)\tensor_\Z\Q=0\,\,\,\,\,\,{\rm for~all}\,\,\,\,1\le i \le k + d_S - 3
$$
and
\begin{equation}
\label{eqn:residual L-vanishing}
\widetilde H^i  (U  \sm \{x\}, \Q) = 0 \,\,\,\,\,\,{\rm for~all}\,\,\,\,i \le \min\{1,k + d_S - 2\}.
\end{equation}
This implies that $\lcdef(X)\le n-k$ is equivalent to property $L_{k-3}$ along $Z$ plus the vanishing \eqref{eqn:residual L-vanishing}. It suffices to prove that property $L_{k-3}$ along $Z$ implies $\lcdef(X)\le n-k$.

When $k\ge 3$, we have $\lcdef(X)\le n-3$ by \cite{DT16}, and thus, \eqref{eqn:residual L-vanishing} always holds. When $k\le 2$, we always have $\lcdef(X)\le n-k$ by \cite{Ogus73} and the local Lichtenbaum theorem (\cite[Theorem 3.1]{Hartshorne68} or \cite[Corollary 2.10]{Ogus73}).
\end{proof}

\begin{proof}[{Proof of Theorem \ref{thm:lcdef-characterization}}]
From the assumption, Corollary \ref{cor:L-link} implies that there is a surjection
$$
H^i_L( X, x) \twoheadrightarrow  H^{i +1}  (U  \sm \{x\}, \Z) 
$$
for all $1\le i \le s-1$. Therefore property $L_{k-3}$ implies the vanishing 
$$
\widetilde H^{i}  (U  \sm \{x\}, \Q)=0 \,\,\,\,\,\,{\rm for~ all}\,\,\,\, 2\le i\le k+d_S-2.
$$
Since $R^1 f_* \O_{\widetilde{X}} = 0$, $X$ satisfies $S_3$; see \cite[Lemma 6.3]{PP25}, which is a result from \cite{Kollar}. Again, by \cite{DT16}, we have $\lcdef(X)\le n-3$ and thus the above vanishing holds in fact for all $i\le k+d_S-2$. Proposition \ref{prop:lcdef-vanishing} then completes the proof.
\end{proof}

\noindent
{\bf Proof of Theorem \ref{thm:RHM-characterization}.} Recall that if $n\le 2$, the statement is vacuous, since a surface with rational singularities is an RHM. We prove the following proposition, which gives a precise description of the stalks of the intersection complex in terms of the local cohomological invariants $H^i_L(X,x)$. This immediately implies Theorem \ref{thm:RHM-characterization} and is also used in Corollary \ref{cor:equiv-threefold} in the body of the paper. 

\begin{prop}\label{prop:RHM-isolated}
Assume $X$ is normal, and $X\sm \left\{x\right\}$ is a rational homology manifold. If there exists an integer $2\le s\le \mathrm{depth}(\O_X)-1$ such that
$$
R^i\mu_*\O_{\widetilde X}=0\,\,\,\,{\rm for~ all } \,\,\,\, 1\le i\le s
$$
for a resolution of singularities $\mu:\widetilde X\to X$, then there exists an isomorphism
$$
H^{s-1}_L (X, x)\tensor_\Z\Q\simeq \H^{-n+s}(\iota_x^*\IC_X)
$$
where $\iota_x \colon \{x\} \hookrightarrow X$ is the closed embedding.
\end{prop}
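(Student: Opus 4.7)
The plan is to combine Corollary \ref{cor:L-link}(ii), which converts the local analytic invariant $H^{s-1}_L(X,x)$ into ordinary topological cohomology of a punctured neighborhood, with the standard attaching triangle for $\IC_X$ along the point $\{x\}$, exploiting the hypothesis that $X\sm\{x\}$ is already a rational homology manifold.

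For the first step, the hypothesis $R^i\mu_*\O_{\widetilde X}=0$ for $1\le i\le s$ is exactly the input needed for Corollary \ref{cor:L-link}(ii). Applied with $i=s-1\ge 1$ (using $s\ge 2$), it yields, for a sufficiently small Stein neighborhood $U$ of $x$, a short exact sequence
$$0 \to H^{s-1}(U\sm\{x\}, \O) \to H^{s-1}_L(X,x) \to H^{s}(U\sm\{x\}, \Z) \to 0.$$
The depth bound $s\le \depth(\O_X)-1$ and \cite[Theorem 1.14]{ST71} force $H^i_{\{x\}}(U,\O)=0$ for $i\le s$; together with the Stein vanishing $H^j(U,\O)=0$ for $j\ge 1$, the local cohomology long exact sequence then yields $H^{s-1}(U\sm\{x\}, \O)=0$. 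Tensoring with $\Q$ and passing to the direct limit gives
$$H^{s-1}_L(X,x)\otimes_\Z\Q \;\cong\; \varinjlim_{x\in U} H^s(U\sm\{x\},\Q).$$

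For the second step, let $j\colon X\sm\{x\} \hookrightarrow X$ be the open embedding. Since $X\sm\{x\}$ is a rational homology manifold, $j^*\IC_X = \Q_{X\sm\{x\}}[n]$, so the standard distinguished triangle attached to the closed embedding $\iota_x$ reads
$$(\iota_x)_*\iota_x^!\IC_X \longrightarrow \IC_X \longrightarrow Rj_*\Q_{X\sm\{x\}}[n] \xrightarrow{+1}.$$
By \cite[Proposition 8.2.5]{HTT} we have $\iota_x^!\IC_X\in {}^p\!D^{\ge 1}(\{x\},\Q)$; since the perverse $t$-structure on a point coincides with the standard one, this forces $\H^k\iota_x^!\IC_X = 0$ for all $k\le 0$. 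Taking the long exact sequence of cohomology sheaves in degree $-n+s$ --- which lies in the range $k\le -1$ precisely because $s \le \depth(\O_X)-1 \le n-1$ --- produces the isomorphism
$$\H^{-n+s}(\IC_X) \;\cong\; \H^{-n+s}\bigl(Rj_*\Q_{X\sm\{x\}}[n]\bigr) \;=\; \H^s(Rj_*\Q_{X\sm\{x\}}).$$
Applying the exact functor $\iota_x^*$ (i.e.\ taking the stalk at $x$) identifies the right-hand side with $\varinjlim_{x\in U}H^s(U\sm\{x\},\Q)$, matching the output of the first step and completing the proof.

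The technical heart is the double role played by the single inequality $s\le \depth(\O_X)-1$: it simultaneously kills the analytic term $H^{s-1}(U\sm\{x\},\O)$ in the first step and keeps $-n+s$ in the range $k\le -1$ where the $\iota_x^!\IC_X$ contribution vanishes in the second. The main obstacle, insofar as there is one, is bookkeeping --- aligning the two vanishing ranges so that the same integer $s$ controls both --- which explains the precise shape $(s-1$ on the left, $-n+s$ on the right$)$ of the isomorphism in the statement.
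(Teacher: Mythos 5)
Your proposal is correct and follows essentially the same route as the paper's proof: the same use of Corollary \ref{cor:L-link}(ii) together with the depth bound and Stein vanishing to identify $H^{s-1}_L(X,x)\otimes\Q$ with the stalk of $R^sj_*\Q_{X\sm\{x\}}$, and the same attaching triangle for $\IC_X$ at $x$ with the perverse-degree bound on $\iota_x^!\IC_X$ to identify that stalk with $\H^{-n+s}(\iota_x^*\IC_X)$. The only cosmetic difference is that you spell out explicitly that the perverse $t$-structure on a point is the standard one, whereas the paper cites \cite[Proposition 8.1.24]{HTT}.
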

\begin{proof}
This is similar to the proof of Proposition \ref{prop: local defect of Q-factoriality}. As in Corollary \ref{cor:L-link}, we obtain a short exact sequence
$$
0\to\varinjlim_{x\in U} H^{s-1}\left(U\sm \left\{x\right\},\O\right)\to H^{s-1}_L(X, x) \to \varinjlim_{x\in U} H^{s}\left(U\sm \left\{x\right\},\Z\right)\to 0.
$$
By \cite[Theorem 1.14]{ST71}, the depth condition implies that $H^{i}_{\left\{x\right\}}(U,\O_U)=0$ for $i\le s$. This gives
$$
H^{s-1}\left(U\sm \left\{x\right\},\O\right) \simeq H^{s-1}(U,\O),
$$
which is zero if $U$ is Stein. Therefore, we have
$$
H^{s-1}_L(X, x) \simeq \varinjlim_{x\in U} H^{s}\left(U\sm \left\{x\right\},\Z\right) \simeq \H^s (\iota_x^* Rj_*\Z_{X\sm\left\{x\right\}}) .
$$
Therefore it suffices to establish an isomorphism 
$$
R^i j_*\Q_{X\sm\left\{x\right\}} \simeq \H^{-n+i} (\IC_X) 
$$
for all $i\le n-1$, where $j:X\sm\left\{x\right\}\to X$ is the open embedding.
Note now that we have a distinguished triangle 
$$
{\iota_x}_* \iota_x^!\IC_X\to\IC_X\to R j_*\Q_{X\sm \{x\}}[n]\xrightarrow{+1}.
$$
By \cite[Proposition 8.2.5]{HTT}, we have $\iota_x^!\IC_X\in {}^p\!D_c^{\ge 1}(\{x\},\Q)$, hence
$$
\H^k(\iota_x^!\IC_X)=0 \,\,\,\,\,\, {\rm for} \,\,\,\,\,\, k\le 0
$$
by \cite[Proposition 8.1.24]{HTT}. Passing to cohomology in the distinguished triangle then completes the proof. 
\end{proof}

\bibliographystyle{alpha}
\bibliography{refs}

\newcommand{\etalchar}[1]{$^{#1}$}
\begin{thebibliography}{GNAPGP88}

\bibitem[BBD82]{BBD}
Alexander~A. Beilinson, Joseph Bernstein, and Pierre Deligne.
\newblock Faisceaux pervers.
\newblock In {\em Analysis and topology on singular spaces, {I} ({L}uminy, 1981)}, volume 100 of {\em Ast\'erisque}, pages 5--171. Soc. Math. France, Paris, 1982.

\bibitem[BBL{\etalchar{+}}23]{BBLSZ}
Bhargav Bhatt, Manuel Blickle, Gennady Lyubeznik, Anurag~K. Singh, and Wenliang Zhang.
\newblock {Applications of perverse sheaves in commutative algebra}.
\newblock {\em preprint arXiv:2308.03155v1}, 2023.

\bibitem[BM83]{BM83}
Walter Borho and Robert MacPherson.
\newblock Partial resolutions of nilpotent varieties.
\newblock In {\em Analysis and topology on singular spaces, {II}, {III} ({L}uminy, 1981)}, volume 101 of {\em Ast\'{e}risque}, pages 23--74. Soc. Math. France, Paris, 1983.

\bibitem[Bra21]{Braun21}
Lukas Braun.
\newblock The local fundamental group of a {K}awamata log terminal singularity is finite.
\newblock {\em Invent. Math.}, 226(3):845--896, 2021.

\bibitem[Bry08]{Brylinski}
Jean-Luc Brylinski.
\newblock {\em Loop spaces, characteristic classes and geometric quantization}.
\newblock Modern Birkh\"{a}user Classics. Birkh\"{a}user Boston, Inc., Boston, MA, 2008.
\newblock Reprint of the 1993 edition.

\bibitem[CDM24]{CDM22}
Qianyu Chen, Bradley Dirks, and Mircea Musta{\c t}{\u a}.
\newblock {The minimal exponent and $k$-rationality for local complete intersections}.
\newblock {\em J. {\'E}c. polytech. Math.}, 11:849--873, 2024.

\bibitem[DB81]{DB81}
Philippe Du~Bois.
\newblock Complexe de de {R}ham filtr\'{e} d'une vari\'{e}t\'{e} singuli{\`e}re.
\newblock {\em Bull. Soc. Math. France}, 109(1):41--81, 1981.

\bibitem[dC12]{deCataldo12}
Mark Andrea~A. de~Cataldo.
\newblock The perverse filtration and the {L}efschetz hyperplane theorem, {II}.
\newblock {\em J. Algebraic Geom.}, 21(2):305--345, 2012.

\bibitem[dCM10]{dCM10}
Mark Andrea~A. de~Cataldo and Luca Migliorini.
\newblock The perverse filtration and the {L}efschetz hyperplane theorem.
\newblock {\em Ann. of Math. (2)}, 171(3):2089--2113, 2010.

\bibitem[Dim04]{Dim04}
Alexandru Dimca.
\newblock {\em Sheaves in topology}.
\newblock Universitext. Springer-Verlag, Berlin-New York, 2004.

\bibitem[DOR25]{DOR}
Bradley Dirks, Sebasti{\'a}n Olano, and Debaditya Raychaudhury.
\newblock A {H}odge theoretic generalization of {Q}-homology manifolds.
\newblock {\em preprint arXiv:2501.14065}, 2025.

\bibitem[DT16]{DT16}
Hailong Dao and Shunsuke Takagi.
\newblock On the relationship between depth and cohomological dimension.
\newblock {\em Compos. Math.}, 152(4):876--888, 2016.

\bibitem[FL24a]{FL22}
Robert Friedman and Radu Laza.
\newblock Deformations of some local {C}alabi-{Y}au manifolds.
\newblock {\em \'Epijournal de G\'eom\'etrie Alg\'ebrique, Special volume in honour of Claire Voisin (November 5, 2024) epiga:10899}, 03 2024.

\bibitem[FL24b]{FL24}
Robert Friedman and Radu Laza.
\newblock The higher {D}u {B}ois and higher rational properties for isolated singularities.
\newblock {\em J. Algebraic Geom.}, 33(3):493--520, 2024.

\bibitem[Fle81]{Flenner81}
Hubert Flenner.
\newblock Divisorenklassengruppen quasihomogener {S}ingularit\"aten.
\newblock {\em J. Reine Angew. Math.}, 328:128--160, 1981.

\bibitem[GM88]{GM88}
Mark Goresky and Robert MacPherson.
\newblock {\em Stratified {M}orse theory}, volume~14 of {\em Ergebnisse der Mathematik und ihrer Grenzgebiete (3) [Results in Mathematics and Related Areas (3)]}.
\newblock Springer-Verlag, Berlin, 1988.

\bibitem[GNAPGP88]{GNPP}
F.~Guill\'{e}n, V.~Navarro~Aznar, P.~Pascual~Gainza, and F.~Puerta.
\newblock Hyperr\'{e}solutions cubiques et descente cohomologique.
\newblock 1335:xii+192, 1988.
\newblock Papers from the Seminar on Hodge-Deligne Theory held in Barcelona, 1982.

\bibitem[GW18]{GW2018}
Antonella Grassi and Timo Weigand.
\newblock On topological invariants of algebraic threefolds with ({Q}-factorial) singularities.
\newblock {\em preprint arXiv:1804.02424}, 2018.

\bibitem[Har68]{Hartshorne68}
Robin Hartshorne.
\newblock Cohomological dimension of algebraic varieties.
\newblock {\em Ann. of Math. (2)}, 88:403--450, 1968.

\bibitem[Hep19]{Hepler19}
Brian Hepler.
\newblock Rational homology manifolds and hypersurface normalizations.
\newblock {\em Proc. Amer. Math. Soc.}, 147(4):1605--1613, 2019.

\bibitem[HJ14]{HJ14}
Annette Huber and Clemens J\"{o}rder.
\newblock Differential forms in the h-topology.
\newblock {\em Algebr. Geom.}, 1(4):449--478, 2014.

\bibitem[HM16]{HM16}
Brian Hepler and David~B. Massey.
\newblock Perverse results on {M}ilnor fibers inside parameterized hypersurfaces.
\newblock {\em Publ. Res. Inst. Math. Sci.}, 52(4):413--433, 2016.

\bibitem[HTT08]{HTT}
Ryoshi Hotta, Kiyoshi Takeuchi, and Toshiyuki Tanisaki.
\newblock {\em {$D$}-modules, perverse sheaves, and representation theory}, volume 236 of {\em Progress in Mathematics}.
\newblock Birkh\"{a}user Boston, Inc., Boston, MA, {J}apanese edition, 2008.

\bibitem[JKSY22]{JKSY}
Seung-Jo Jung, In-Kyun Kim, Morihiko Saito, and Youngho Yoon.
\newblock Higher {D}u {B}ois singularities of hypersurfaces.
\newblock {\em Proc. Lond. Math. Soc. (3)}, 125(3):543--567, 2022.

\bibitem[Kol]{Kollar}
J\'{a}nos Koll\'{a}r.
\newblock Unpublished draft on local {P}icard groups.

\bibitem[Kol89]{Kollar89}
J\'anos Koll\'ar.
\newblock Flops.
\newblock {\em Nagoya Math. J.}, 113:15--36, 1989.

\bibitem[Kol13]{Kollar-Picard}
J\'{a}nos Koll\'{a}r.
\newblock Grothendieck-{L}efschetz type theorems for the local {P}icard group.
\newblock {\em J. Ramanujan Math. Soc.}, 28A:267--285, 2013.

\bibitem[KS21]{KS21}
Stefan Kebekus and Christian Schnell.
\newblock Extending holomorphic forms from the regular locus of a complex space to a resolution of singularities.
\newblock {\em J. Amer. Math. Soc.}, 34(2):315--368, 2021.

\bibitem[Laz04]{Lazarsfeld}
Robert Lazarsfeld.
\newblock {\em Positivity in algebraic geometry. {I}}, volume~48 of {\em Ergebnisse der Mathematik und ihrer Grenzgebiete. 3. Folge. A Series of Modern Surveys in Mathematics [Results in Mathematics and Related Areas. 3rd Series. A Series of Modern Surveys in Mathematics]}.
\newblock Springer-Verlag, Berlin, 2004.
\newblock Classical setting: line bundles and linear series.

\bibitem[Mas22]{Massey22}
David Massey.
\newblock The perverse eigenspace of one for the {M}ilnor monodromy.
\newblock {\em Ann. Inst. Fourier (Grenoble)}, 72(4):1535--1546, 2022.

\bibitem[Max19]{Maxim19}
Lauren\c tiu~G. Maxim.
\newblock {\em Intersection homology \& perverse sheaves---with applications to singularities}, volume 281 of {\em Graduate Texts in Mathematics}.
\newblock Springer, Cham, [2019] \copyright 2019.

\bibitem[MOPW23]{MOPW}
Mircea Musta\c{t}\u{a}, Sebasti\'{a}n Olano, Mihnea Popa, and Jakub Witaszek.
\newblock The {D}u {B}ois complex of a hypersurface and the minimal exponent.
\newblock {\em Duke Math. J.}, 172(7):1411--1436, 2023.

\bibitem[MP22]{MuP22a}
Mircea Musta\c{t}\u{a} and Mihnea Popa.
\newblock Hodge filtration on local cohomology, {D}u {B}ois complex and local cohomological dimension.
\newblock {\em Forum Math. Pi}, 10:Paper No. e22, 58, 2022.

\bibitem[MP25]{MuP22b}
Mircea Musta\c{t}\u{a} and Mihnea Popa.
\newblock On {$k$}-rational and {$k$}--{D}u {B}ois local complete intersections.
\newblock {\em Algebr. Geom.}, 12(2):237--261, 2025.

\bibitem[Mum61]{Mumford61}
David Mumford.
\newblock The topology of normal singularities of an algebraic surface and a criterion for simplicity.
\newblock {\em Inst. Hautes \'{E}tudes Sci. Publ. Math.}, (9):5--22, 1961.

\bibitem[Ogu73]{Ogus73}
Arthur Ogus.
\newblock Local cohomological dimension of algebraic varieties.
\newblock {\em Ann. of Math. (2)}, 98:327--365, 1973.

\bibitem[Par23]{Park23}
Sung~Gi Park.
\newblock {Du Bois complex and extension of forms beyond rational singularities}.
\newblock {\em preprint arXiv:2311.15159}, 2023.

\bibitem[PP24]{PP}
Sung~Gi Park and Mihnea Popa.
\newblock Lefschetz theorems, {Q}-factoriality, and {H}odge symmetry for singular varieties.
\newblock {\em preprint arXiv:2410.15638v2}, 2024.

\bibitem[PP25]{PP25}
Sung~Gi Park and Mihnea Popa.
\newblock {Q}-factoriality and {H}odge-{D}u {B}ois theory.
\newblock {\em preprint arXiv:2508.17748}, 2025.

\bibitem[PS08]{PS08}
Chris A.~M. Peters and Joseph H.~M. Steenbrink.
\newblock {\em Mixed {H}odge structures}, volume~52 of {\em Ergebnisse der Mathematik und ihrer Grenzgebiete. 3. Folge. A Series of Modern Surveys in Mathematics [Results in Mathematics and Related Areas. 3rd Series. A Series of Modern Surveys in Mathematics]}.
\newblock Springer-Verlag, Berlin, 2008.

\bibitem[PS25]{PS24}
Mihnea Popa and Wanchun Shen.
\newblock Du {B}ois complexes of cones over singular varieties, local cohomological dimension, and {K}-groups.
\newblock {\em Rev. Roumaine Math. Pures Appl., L. B\u adescu memorial issue}, 70(1-2):133--155, 2025.

\bibitem[PSV24]{PSV24}
Mihnea Popa, Wanchun Shen, and Anh~Duc Vo.
\newblock Injectivity and vanishing for the {D}u {B}ois complexes of isolated singularities.
\newblock {\em preprint arXiv:2409.18019, to appear in Algebra and Number Theory}, 2024.

\bibitem[RSW23]{RSW23}
Thomas Reichelt, Morihiko Saito, and Uli Walther.
\newblock Topological calculation of local cohomological dimension.
\newblock {\em J. Singul.}, 26:13--22, 2023.

\bibitem[Sai88]{Saito88}
Morihiko Saito.
\newblock Modules de {H}odge polarisables.
\newblock {\em Publ. Res. Inst. Math. Sci.}, 24(6):849--995, 1988.

\bibitem[Sai90]{Saito90}
Morihiko Saito.
\newblock Mixed {H}odge modules.
\newblock {\em Publ. Res. Inst. Math. Sci.}, 26(2):221--333, 1990.

\bibitem[Sai00]{Saito00}
Morihiko Saito.
\newblock Mixed {H}odge complexes on algebraic varieties.
\newblock {\em Math. Ann.}, 316(2):283--331, 2000.

\bibitem[Sai18]{Saito18}
Morihiko Saito.
\newblock Weight zero part of the first cohomology of complex algebraic varieties.
\newblock {\em preprint arXiv:1804.03632}, 2018.

\bibitem[Sch16]{Schnell16}
Christian Schnell.
\newblock On {S}aito's vanishing theorem.
\newblock {\em Math. Res. Lett.}, 23(2):499--527, 2016.

\bibitem[ST71]{ST71}
Yum-Tong Siu and G\"unther Trautmann.
\newblock {\em Gap-sheaves and extension of coherent analytic subsheaves}, volume Vol. 172 of {\em Lecture Notes in Mathematics}.
\newblock Springer-Verlag, Berlin-New York, 1971.

\bibitem[SVV23]{SVV23}
Wanchun Shen, Sridhar Venkatesh, and Anh~Duc Vo.
\newblock {On $k$-Du Bois and $k$-rational singularities}.
\newblock {\em preprint arXiv:2306.03977, to appear in Ann. Inst. Fourier}, 2023.

\bibitem[TX17]{TX17}
Zhiyu Tian and Chenyang Xu.
\newblock Finiteness of fundamental groups.
\newblock {\em Compos. Math.}, 153(2):257--273, 2017.

\bibitem[Web04]{Weber04}
Andrzej Weber.
\newblock Pure homology of algebraic varieties.
\newblock {\em Topology}, 43(3):635--644, 2004.

\end{thebibliography}

\end{document}